\documentclass[a4paper,11pt]{article}

\input{styleart.sty}
\usepackage{xcolor}
\usepackage{leftindex}
\def\<#1>{\mathinner{\langle#1\rangle}}

\usepackage{mathtools} 
\usepackage{amsmath} 
\usepackage{qtree}
\usepackage{forest}

\newcommand{\cspace}{~}

\usepackage{todonotes}

\title{Model theory of term algebras revisited}
\date{}
\author{Davide Carolillo, Yifan Jia, Bakh Khoussainov \& Rizos Sklinos} 

\begin{document}

\maketitle

\begin{abstract}
Building on work of Maltsev \cite{Malcev71} on locally free algebras in finite purely functional languages, we revisit the model theory of (absolutely free) term algebras and their completions. Maltsev's analysis yields a natural axiomatization together with quantifier elimination to positive Boolean combinations of special formulas, and shows that the complete extensions are parametrized exactly by the number \(k\in\{0,1,\dots,\omega\}\) of indecomposable elements; for \(1\le k\le\omega\) the standard model is the free term algebra on \(k\) generators. 

We give a new, quantifier-elimination--free proof of completeness using Ehrenfeucht--Fra\"iss\'e games, and we establish several further structural properties of the standard models and theories. In particular, for \(1\le k\le\omega\) we prove first-order rigidity and atomicity of the standard model. 

For every \(0\le k\le\omega\) we show that the corresponding theory does not have the finite cover property and weakly eliminates imaginaries. We also provide new proofs of stability-theoretic features previously obtained by Belegradek \cite{Belegradek1988LocallyFree}: the theories are stable but not superstable, normal (hence \(1\)-based), and have trivial forking; consequently, no infinite group is interpretable in any model. 

Finally, we analyze model completeness and show that \(T_0\) is the model companion of the theory of locally free algebras, while the theories with \(k\ge 1\) are not model complete.
\end{abstract}

\section{Introduction} 
Maltsev proved \cite{Malcev71} that, in any fixed finite purely functional language, the class of locally free algebras admits a natural axiomatization and elimination of quantifiers down to positive Boolean combinations of \emph{special formulas} (see Section~\ref{Prel}). In particular, his analysis yields a clean description of the completions of this axiomatization: they are parametrized exactly by the number of indecomposable elements. More precisely, for each cardinal \(0\leq k\leq \omega\) there is a distinct complete theory extending the axioms of locally free algebras, obtained by stipulating that the algebra has precisely \(k\) indecomposables; and these are the only complete extensions.

For every \(1\leq k\leq \omega\), the canonical (standard) model of the resulting complete theory is the absolutely free term algebra on \(k\) generators. Moreover, as a straightforward consequence of the same quantifier-elimination framework, each of these theories---including the case \(k=0\)---is stable.

These theories were subsequently investigated from the point of view of stability theory by Belegradek \cite{Belegradek1988LocallyFree,Belegradek1989ForkingLocallyFree}. In particular, he observed that passing to a suitable definitional expansion by \emph{projections} yields full quantifier elimination, and he exploited this strengthening as a technical lever for a finer stability-theoretic analysis. Unfortunately, these results have not (to our knowledge) been translated into English; as a consequence, we were unaware of this reference during the period in which the bulk of the present work was completed.

The present paper is primarily a guided survey of the existing landscape, filtered through our own perspective, but it also contains several new results. Most notably, we employ Ehrenfeucht--Fra\"iss\'e games to reprove completeness of the theories above, entirely independently of quantifier elimination. We also establish the following results.

\begin{theorem}
For \(1\leq k\leq \omega\), the standard model \(\mathcal{F}_k\) is first-order rigid.
\end{theorem}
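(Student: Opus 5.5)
We read ``first-order rigid'' in the sense of Avni--Lubotzky--Meiri: every finitely generated algebra \(\mathcal{N}\) with \(\mathcal{N}\equiv\mathcal{F}_k\) is isomorphic to \(\mathcal{F}_k\). It cannot mean ``no nontrivial automorphisms'', since for \(k\ge 2\) permuting generators gives automorphisms. The plan is to show directly that any finitely generated model of \(T_k\) is free on its indecomposables, and then use that the theory fixes the number of indecomposables to be \(k\).

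We use only the axioms of locally free algebras recalled in Section~\ref{Prel}:
\begin{enumerate}
\item[(a)] each function symbol is injective;
\item[(b)] distinct function symbols have disjoint ranges;
\item[(c)] acyclicity: \(x\neq t(x)\) for every nontrivial term \(t\) in which \(x\) occurs;
\item[(d)] an element is indecomposable iff it is not in the range of any function symbol.
\end{enumerate}
Axioms (a) and (b) say each decomposable element \(a\) has a \emph{unique} decomposition \(a=f(\bar b)\), and we call the \(b_i\) the immediate components of \(a\).

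\textbf{Step 1 (unwinding terms).} Let \(\mathcal{N}\models T_k\) be generated by \(\bar g=(g_1,\dots,g_n)\). Suppose \(a=t(\bar g)\) and \(t\) is not a variable, say \(t=f(\bar s)\). Then by uniqueness of decomposition the immediate components of \(a\) are \(\bar s(\bar g)\). These are values of terms that are \emph{strictly shorter} than \(t\).

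\textbf{Step 2 (decomposition terminates).} This is the main obstacle. We claim there is no infinite chain \(a_0,a_1,\dots\) in \(\mathcal{N}\) with each \(a_{i+1}\) an immediate component of \(a_i\).
\begin{itemize}
\item For each \(i\), choose a term \(t_i\) of minimal length with \(a_i=t_i(\bar g)\).
\item By Step 1, if \(t_i\) is not a variable then \(|t_{i+1}|<|t_i|\).
\item Hence the chain visits the finite set \(\{g_1,\dots,g_n\}\) infinitely often, so by pigeonhole some generator \(g_j\) occurs as both \(a_i\) and \(a_m\) with \(m>i\).
\item Then \(g_j=u(g_j)\), where \(u\) is the nontrivial term recording the path from \(a_i\) down to \(a_m\). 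This contradicts acyclicity (c).
\end{itemize}

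By König's lemma, since each element has finitely many immediate components, every element has a finite decomposition tree. Its leaves are indecomposable, so every element of \(\mathcal{N}\) is a term in indecomposables. Indecomposables cannot be produced from other elements by (d), so every indecomposable must occur among the \(g_j\). Thus the set \(I\) of indecomposables is finite and generates \(\mathcal{N}\).

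\textbf{Step 3 (freeness).} The canonical homomorphism from the term algebra on \(I\) onto \(\mathcal{N}\) is injective. We prove by induction on the sum of term lengths that distinct terms \(s\neq s'\) have distinct values:
\begin{itemize}
\item two distinct variables denote distinct elements of \(I\);
\item a variable and a nonvariable term differ, since indecomposables are not in any range (d);
\item two terms with different outer symbols differ by disjointness of ranges (b);
\item two terms with the same outer symbol differ by injectivity (a) and the induction hypothesis.
\end{itemize}
Hence \(\mathcal{N}\) is the free term algebra on \(|I|\) generators.

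\textbf{Step 4 (counting).} \(T_k\) states that there are exactly \(k\) indecomposables.
\begin{itemize}
\item For \(k<\omega\) this gives \(|I|=k\), so \(\mathcal{N}\cong\mathcal{F}_k\).
\item For \(k=\omega\), Step 2 shows that a finitely generated model has finitely many indecomposables. So \(T_\omega\) has no finitely generated model and the statement holds vacuously.
\end{itemize}
If the paper instead intends countably generated models for \(k=\omega\), the pigeonhole in Step 2 is what fails. We would then use the same argument with the generating set \emph{chosen} to be \(I\), which we know is countable.
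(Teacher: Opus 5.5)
Your argument is correct, and it has the same two-step skeleton as the paper's. The paper derives rigidity (Corollary~\ref{rigid}) from Proposition~\ref{prop:term-algebras-iso-elem}. It implicitly uses that a finitely generated model of $Th(\mathcal{F}_m)$ is a finitely generated locally free algebra, hence absolutely free, hence some $\mathcal{F}_n$. The sentence counting indecomposables then forces $n=m$. For $k=\omega$ it observes, as you do, that there are no finitely generated models, so the claim is vacuous.

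The difference is that the paper gets ``finitely generated model $\Rightarrow$ term algebra'' by citing Maltsev's theorem that $A_i$, $B_{ij}$, $C_t$ axiomatize the locally free algebras, whereas you prove it from the axioms. Your Step~2 is exactly where finite generation and the $C_t$ axioms enter: minimal-length terms shrink along a decomposition chain except at generators, so some generator recurs, giving $g_j=u(g_j,\dots)$. Step~3 then shows the rank equals the number of indecomposables without any further appeal to the literature. As a by-product, this makes rigorous the paper's informal remark that $\mathcal{T}_0$ and $Th(\mathcal{F}_\omega)$ have no finitely generated models. The paper's route is two lines but rests on an external theorem; yours is self-contained.

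One correction: delete the final hedge about countably generated models, because that variant of the statement is false. By compactness and downward L\"owenheim--Skolem, $Th(\mathcal{F}_\omega)$ has a countable model containing an element with decomposition chains of every finite length. Each finite fragment of that type is realized by a deep enough element of $\mathcal{F}_\omega$, but no single element of $\mathcal{F}_\omega$ realizes all of it. Such a model is countable, hence countably generated, but not isomorphic to $\mathcal{F}_\omega$, and its indecomposables do not generate it. So ``running the argument with generating set $I$'' presupposes exactly what fails; Step~2 genuinely needs a finite generating set.
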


We further show:

\begin{theorem}
For each \(1\leq k\leq \omega\), the standard model \(\mathcal{F}_k\) is atomic.
\end{theorem}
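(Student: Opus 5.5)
To prove that $\mathcal{F}_k$ is atomic, I will show that every finite tuple $\bar a$ from $\mathcal{F}_k$ realizes an isolated type. The idea is to isolate $\mathrm{tp}(\bar a)$ by a single formula that says how each $a_i$ is built from generators, and then check that this formula pins down the type up to automorphism.

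Since $\mathcal{F}_k$ is the term algebra on generators $x_1,x_2,\dots$ (indexed by $k$), each $a_i$ equals $t_i(x_{j_1},\dots,x_{j_m})$ for some term $t_i$ and finitely many generators. I write $\bar g=(x_{j_1},\dots,x_{j_m})$ for the generators that occur.

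\textbf{Isolating formula for $\bar g$.} Let $\theta(\bar y)$ say that the $y_j$ are pairwise distinct and each is indecomposable, i.e.\ $\neg\exists \bar z\, y_j=f(\bar z)$ for every function symbol $f$ of the finite language. Any two tuples realizing $\theta$ in $\mathcal{F}_k$ consist of distinct generators, since in the term algebra the generators are exactly the indecomposables. A permutation of the generators extends to an automorphism of $\mathcal{F}_k$, so it carries one such tuple to the other. Hence all realizations of $\theta$ in $\mathcal{F}_k$ have the same type.

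\textbf{Upgrading to isolation in the theory.} The previous step only shows that realizations in $\mathcal{F}_k$ share a type, which is weaker than $\theta$ isolating a type of $T_k$. I handle this with the completeness theorem together with Maltsev's quantifier elimination to positive Boolean combinations of special formulas. By that elimination, the type of $\bar y$ in any model of $T_k$ is determined by which special formulas hold of it. I then check that $\theta$ decides every special formula. Roughly, special formulas assert equations $t(\bar y)=s(\bar y)$, or decomposability statements $\exists\bar z\,(\dots)$ describing how elements are built. For distinct indecomposables $\bar y$, the axioms of locally free algebras (injectivity, disjointness of ranges, no cycles) decide every such equation syntactically: $t(\bar y)=s(\bar y)$ holds iff $t$ and $s$ are the same term. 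The decomposability statements are decided similarly.

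Verifying this last claim carefully against the exact definition of special formulas in Section~\ref{Prel} is the main obstacle. If it becomes awkward, I will instead use the Ehrenfeucht--Fra\"iss\'e argument from the completeness proof. That argument shows that tuples of distinct indecomposables in models of $T_k$ satisfying the same "term-equality diagram" are back-and-forth equivalent, and for distinct indecomposables this diagram is the trivial one. For finite $k$ I must also make sure the formula records that $\bar g$ uses at most $k$ elements, which is automatic.

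\textbf{Conclusion.} Set $\varphi(\bar v)=\exists \bar y\,(\theta(\bar y)\wedge \bigwedge_i v_i=t_i(\bar y))$. Suppose $\bar b$ satisfies $\varphi$ in a model of $T_k$, witnessed by $\bar y=\bar c$. Then $\mathrm{tp}(\bar c)=\mathrm{tp}(\bar g)$ by the previous step, and $\bar b=\bar t(\bar c)$, so $\mathrm{tp}(\bar b)=\mathrm{tp}(\bar a)$. Therefore $\varphi$ isolates $\mathrm{tp}(\bar a)$, and $\mathcal{F}_k$ is atomic.
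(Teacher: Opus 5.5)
Your proposal is correct and is essentially the paper's proof: the paper uses the same formula $\exists\bar y\,\bigl(\theta(\bar y)\wedge\bar v=\bar t(\bar y)\bigr)$, and justifies it, as you do, by observing that the solution set of $\theta$ in $\mathcal{F}_k$ is a single automorphism orbit. The ``upgrading'' step you flag as the main obstacle is not needed, because $T_k=Th(\mathcal{F}_k)$ is complete: if every realization of $\theta$ in $\mathcal{F}_k$ has type $p$, then for each $\psi\in p$ the sentence $\forall\bar y\,\bigl(\theta(\bar y)\to\psi(\bar y)\bigr)$ holds in $\mathcal{F}_k$ and so lies in $T_k$, hence $\theta$ isolates $p$ without any appeal to Maltsev's elimination or Ehrenfeucht--Fra\"iss\'e games.
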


In contrast, no locally free algebra with no indecomposable elements is atomic.

The next two statements also appear to be new.

\begin{theorem}
For each \(0\leq k\leq \omega\), the theory \(\mathcal{T}_k\) does not have the finite cover property.
\end{theorem}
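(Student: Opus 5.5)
We argue with the standard syntactic characterization of the finite cover property. \(T\) has the f.c.p.\ iff there is a formula \(\varphi(x,y;\bar z)\) such that, for every \(\bar a\), \(\varphi(x,y;\bar a)\) is an equivalence relation, and for every \(n\) there is \(\bar a\) for which \(\varphi(x,y;\bar a)\) has finitely many classes, but more than \(n\) of them. So it suffices to fix such a formula \(\varphi\), uniformly definable equivalence relations \(E_{\bar a}\), and produce a bound \(N_\varphi\) with the following property: whenever \(E_{\bar a}\) has finitely many classes, it has at most \(N_\varphi\) of them. An equivalent route is Keisler's: show that for each \(\varphi(x;\bar y)\) there is \(n\) such that every \(n\)-consistent family of \(\varphi\)-instances and their negations is consistent. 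We work with whichever form fits best with Maltsev's quantifier elimination to positive Boolean combinations of special formulas, as recalled in Section~\ref{Prel}.

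\textbf{Step 1: a uniform description of definable sets.} By the quantifier elimination, \(\varphi(x,y;\bar a)\) is equivalent to a Boolean combination of special formulas in \(x,y,\bar a\). There are finitely many of them, and their shapes are independent of \(\bar a\). The key structural fact we aim for is this. In any locally free algebra, a single-variable definable set \(\psi(x;\bar a)\) is one of two kinds.
\begin{enumerate}
\item It is the set of realizations of a finite Boolean combination of conditions of the form ``\(t(x,\bar a)=s(x,\bar a)\)'' and ``\(x\) is of the form \(f(\dots)\)'' (or a finite iteration of such decompositions).
\item Because of unique decomposition and acyclicity (\(t(x)\neq x\) for nontrivial \(t\)), each such equation has either at most one solution or is an identity.
\end{enumerate}
Consequently every definable set is finite of size bounded by a number depending only on \(\psi\), or cofinite in a piece determined by top-level constructor patterns, again with bounded complement. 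The same analysis applies to \(2\)-variable formulas, using the terms occurring in \(\varphi\).

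\textbf{Step 2: bounding the classes.} Fix \(\bar a\) with \(E_{\bar a}\) having finitely many classes. Decompose the universe into the boundedly many cells determined by the constructor patterns up to the depth \(d\) of the terms in \(\varphi\). These are sets of the form ``\(x=t(\bar u)\) with \(\bar u\) ranging freely''. Then use Step 1 to show the following dichotomy. On an infinite cell, where the variables \(\bar u\) are not tied to \(\bar a\) by any equation, \(E_{\bar a}\) restricted to generic elements depends only on which of finitely many term-patterns relate \(x\) to \(y\). So each cell, minus a bounded exceptional set, contributes a number of classes bounded in terms of \(\varphi\), or else infinitely many classes. Infinitely many would arise, for example, when \(x\,E\,y\) forces \(x=y\) on a generic part. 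The exceptional elements are those satisfying a nontrivial equation with parameters, and their number is bounded by the number of equations times one. Summing over cells gives \(N_\varphi\).

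\textbf{Main obstacle.} The hard step is making the ``generic part'' argument rigorous uniformly in \(\bar a\). Elements of a cell can still interact with \(\bar a\) deep inside, for instance when a subterm of \(x\) equals some \(a_i\). To handle this, I would use an automorphism/indiscernibility argument in a saturated model, as follows.
\begin{enumerate}
\item Two elements \(x,x'\) in the same cell whose subterms up to depth \(d\) avoid the subterm-closure of \(\bar a\) (to depth \(d\)) have the same type over \(\bar a\). To prove this, apply the Ehrenfeucht--Fra\"iss\'e completeness analysis relativized to parameters, or simply the quantifier elimination, since special formulas up to depth \(d\) cannot distinguish them.
\item Hence \(E_{\bar a}\) restricted to such a generic part is \(\mathrm{Aut}(\mathcal M/\bar a)\)-invariant. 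It is then either trivial on it, giving one class, or has infinitely many classes, because the generic part contains infinitely many independent elements with the same type.
\item The non-generic elements form a set of bounded size modulo the finitely many ``shapes'' determined by which subterm equals which \(a_i\). Iterating the same argument on these shapes, where each shape is again a cell with fewer free positions, terminates after a depth-bounded number of steps.
\end{enumerate}
This yields the uniform bound, so \(\mathcal T_k\) does not have the finite cover property for every \(0\le k\le\omega\).
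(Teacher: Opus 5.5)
\textbf{There is a genuine gap: the argument meant to produce the bound $N_\varphi$ rests on claims that are false.} The equivalence-relation characterization of the f.c.p.\ is itself a legitimate tool here, because $\mathcal T_k$ is stable and the characterization needs stability. Note, though, that it is stated for $E(\bar x,\bar y;\bar z)$ on tuples, so treating only singletons $x,y$ is not enough without a further reduction.

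The conclusion of Step~1 fails even without parameters. The set $\{f(u,v):u\neq v\}$ is infinite, and its complement in the cell $\{f(u,v)\}$ is the infinite cell $\{f(u,u)\}$.

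In Step~2 you conflate two different kinds of condition. Quantifier-free equations $t(x,\bar a)=s(x,\bar a)$ do have at most one solution or hold identically. Conditions such as $\pi^f_1(x)=a_1$, i.e.\ $\exists z\,(x=f(a_1,z))$, have infinitely many solutions. So the exceptional set is not bounded.

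You noticed this, but the repair breaks at both of its key points.

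(i) Agreement of depth-$d$ skeletons that avoid the subterm closure of $\bar a$ does not give the same type over $\bar a$. Take $d=1$, with $b,u,u'$ unrelated to $a_1$. Then $f(f(a_1,a_1),u)$ and $f(f(b,b),u')$ lie in the same cell, and their depth-$1$ subterms avoid the closure of $a_1$. Yet $\pi^f_1\pi^f_1$ separates them over $a_1$. All you get is agreement on formulas of bounded depth, and that does not feed an $\mathrm{Aut}(\mathcal M/\bar a)$-argument.

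(ii) The dichotomy ``one class or infinitely many'' on the generic part is false whenever $2\le k<\omega$. Consider the $\emptyset$-definable equivalence relation
\[
E(x,y):\quad \bigl(N(\pi^f_1(x))\wedge \pi^f_1(x)=\pi^f_1(y)\bigr)\ \vee\ \bigl(\neg N(\pi^f_1(x))\wedge\neg N(\pi^f_1(y))\bigr).
\]
It has exactly $k$ classes on $\{f(u_1,u_2):N(u_1)\}$. With $\bar a=\emptyset$, every element of this set is generic in your sense. Moreover, every permutation of the indecomposables extends to an automorphism of a saturated model, so a single non-algebraic type over $\emptyset$ already meets all $k$ classes. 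Independent realizations of a type falling into one class tell you nothing about the other classes unless the type is stationary, and that is exactly what fails here. So the proposal never locates where ``finitely many but several'' classes come from. Consequently it never produces a bound, and any correct bound must depend on $k$.

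\textbf{The missing idea is the mechanism behind finite splitting: positions that are forced to range over the finitely many indecomposables.} The paper isolates this directly in Keisler's formulation. It uses elimination in $\Sigma^*$ and the closure of nfcp under disjunctions to reduce to conjunctions $\phi^+\wedge\phi^-$ of conditions $\pi_i(x)=y_i$ and $\pi_j(x)\neq y_j$. In a witness to the f.c.p.\ the positive parameters are forced to agree, so $\phi^+$ defines a fixed union of at most $M_{\phi^+}$ pieces $\psi_t$. Negated instances can exhaust such a piece only by pinning a free position of $t$ that is forced to be indecomposable, running through all the indecomposables. This gives the bound $(k+1)M_{\phi^+}|\bar x|$, and $2M_{\phi^+}|\bar x|$ for $k\in\{0,\omega\}$.

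Your route can be salvaged, but only by adding the same ingredient:
\begin{itemize}
\item For $k<\omega$, add the indecomposables to the parameters; they are algebraic, so $E_{\bar a}$ is unchanged.
\item Use elimination in $\Sigma^*$, so that $E_{\bar a}$ depends only on the bounded-depth projection diagram of $(\bar x,\bar y,\bar a)$.
\item Show that a diagram leaving some position free to range over infinitely many values contributes at most one class when $E_{\bar a}$ has finitely many classes. To do this, compare two realizations through a third one in general position.
\item A diagram with no such free position has a unique realization.
\end{itemize}
Since the number of diagrams is bounded in terms of $\varphi$ and $k$, so is the number of classes. As written, the proposal contains none of this.
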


We also prove:

\begin{theorem}
For each \(0\leq k\leq \omega\), the theory \(\mathcal{T}_k\) weakly eliminates imaginaries.
\end{theorem}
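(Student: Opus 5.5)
Weak elimination of imaginaries for $\mathcal{T}_k$ means that every imaginary $e=\bar a/E$, with $E$ a $\emptyset$-definable equivalence relation, is interdefinable with a finite set of real tuples. Concretely, I will show there is a real tuple $\bar c$ such that $e\in\operatorname{dcl}^{eq}(\bar c)$ and $\bar c\in\operatorname{acl}^{eq}(e)$. The plan is to work in a monster model $\mathbb{M}$ of $\mathcal{T}_k$ and exploit triviality of forking. I also need an explicit description of algebraic closure: $\operatorname{acl}(A)$ should be the \emph{closure} of $A$ under the basic functions and under "taking immediate subterms", i.e.\ the inverse partial projections. This closure should in fact coincide with $\operatorname{dcl}(A)$, since each element has a unique decomposition. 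I would first verify this description, using Maltsev's quantifier elimination to positive Boolean combinations of special formulas. An element outside the closure of $A$ satisfies only a non-algebraic type over $A$. One can see this by moving it by an automorphism of an elementary extension that fixes $A$'s closure, for instance by permuting indecomposables or "fresh" decomposition trees.

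The main step follows the standard criterion (e.g.\ Pillay's): a stable theory has weak EI as soon as every imaginary $e$ is interalgebraic with a real tuple. Equivalently, for each $e$ one wants a real set $C$ with $e\in\operatorname{dcl}^{eq}(C)$ and $C\subseteq\operatorname{acl}^{eq}(e)$. To produce $C$, I would take $p=\operatorname{tp}(\bar a/\operatorname{acl}^{eq}(e))$, which is stationary since $\mathcal{T}_k$ is stable (Belegradek/Maltsev, reproved later in the paper). Then I would choose a Morley sequence $\bar a_0,\bar a_1,\dots$ in $p$, each $\bar a_i$ being $E$-equivalent to $\bar a$. The candidate is
\[
C=\operatorname{cl}(\bar a_0)\cap\operatorname{cl}(\bar a_1),
\]
where $\operatorname{cl}$ is the closure described above. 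By triviality of forking and the explicit description of $\operatorname{acl}$, independence of $\bar a_0,\bar a_1$ over $\operatorname{acl}^{eq}(e)$ should force $\operatorname{cl}(\bar a_0)\cap\operatorname{cl}(\bar a_1)\subseteq\operatorname{acl}(e)$. Here forking should be characterized as: $\bar b$ forks with $\bar c$ over $D$ iff their closures over $\operatorname{cl}(D)$ share new elements. This characterization is obtained from normality and triviality. Intersecting with $\operatorname{acl}^{eq}(e)$ then gives $C\subseteq\operatorname{acl}^{eq}(e)$.

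The hardest part is showing $e\in\operatorname{dcl}^{eq}(C)$. For that it suffices to show that $\operatorname{tp}(\bar a_0/C)\cup\operatorname{tp}(\bar a_1/C)\cup\{\bar x E\bar y\}$ is consistent only in a way that forces the $E$-class. More precisely: if $\bar a'\equiv_C\bar a_0$ with $\bar a'\mathop{\smile\!\!\!\!\!\!\;\mid}_C \bar a_0$, then $\bar a' E\bar a_0$. I would argue via the quantifier-free (special-formula) description of types. Two tuples that are independent over $C$ have disjoint closures outside $\operatorname{cl}(C)$, so by "tree amalgamation" in free algebras there is an automorphism of $\mathbb{M}$ that fixes $\operatorname{cl}(C)$ and carries $\bar a_1$ to $\bar a'$ while fixing $\bar a_0$. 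Here one uses that $\mathbb{M}$ is locally free, so one can freely re-grow disjoint branches over the common part. This gives $\bar a' E\bar a_0$, and the standard argument then yields $e\in\operatorname{dcl}^{eq}(C)$.

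I expect this amalgamation/automorphism step to be the main obstacle. It is exactly where the term-algebra structure, namely unique decomposition and the freedom to attach new indecomposables or infinite descending branches, must be used carefully. This is especially delicate for $k$ finite, where new indecomposables are unavailable and one must use non-standard elements with infinite decomposition chains. Only weak EI, rather than full EI, is obtained because finite sets, such as unordered pairs of indecomposables, are not coded; this is consistent with the statement.
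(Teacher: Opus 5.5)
\textbf{There is a genuine gap.} The decisive step is asserted rather than proved, and the tools you line up for it do not deliver it.

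Your key claim is that a realization $\bar a'$ of $\operatorname{tp}(\bar a_0/C)$, independent from $\bar a_0$ over $C$, must be $E$-equivalent to $\bar a_0$. This is a stationarity statement: the type of $\bar a'$ over $C\bar a_0$ must be forced. That is exactly where the whole content of weak elimination of imaginaries lives. ``Re-growing disjoint branches'' guided by Maltsev's special formulas gives you no control over types over $C\bar a_0$.

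Three further problems:
\begin{enumerate}
\item Your plan to get a real-sort characterization of forking from normality plus triviality of forking is circular. $1$-basedness only speaks about intersections of $\operatorname{acl}^{eq}$, and descending to real elements needs some elimination of imaginaries. Indeed, the paper derives trivial forking \emph{from} weak EI.
\item Your description of $\operatorname{acl}$ is false for $1\le k<\omega$. The finite $\emptyset$-definable set $\operatorname{Ind}(\mathcal M)$ lies in $\operatorname{acl}(\emptyset)$, while your closure of $\emptyset$ is empty; permuting indecomposables gives only finitely many images. For $k\ge 2$ one also has $\operatorname{dcl}(\emptyset)\ne\operatorname{acl}(\emptyset)$ (compare Lemma~\ref{algclosurefin}). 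Consequently your forking criterion fails, e.g.\ for $D=\emptyset$ and $\bar b=\bar c=c$ with $c$ indecomposable.
\item Interalgebraicity with a real tuple is geometric EI, which does not imply weak EI even in stable theories; an equivalence relation with two infinite classes is a counterexample. Fortunately, the condition you actually aim at ($e\in\operatorname{dcl}^{eq}(C)$ with $C\subseteq\operatorname{acl}^{eq}(e)$) is the right one.
\end{enumerate}

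\textbf{How to repair it.} Your skeleton can be completed using the quantifier elimination in the projection expansion $\Sigma^*$ proved in the paper.

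The inclusion $C\subseteq\operatorname{acl}^{eq}(e)$ is fine. It only uses that forking-independent sets are algebraically independent, together with $e\in\operatorname{dcl}^{eq}(\bar a_i)$.

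For the hard step, quantifier elimination in $\Sigma^*$ shows that $\operatorname{tp}(\bar a'/C\bar a_0)$ is determined by $\operatorname{tp}(\bar a'/C)$ together with the coincidences $\pi(a'_i)=\pi'(a_{0j})$ between projections. If $\bar a'$ is independent from $\bar a_0$ over $C$, each such coincidence lies in $\operatorname{acl}(C\bar a')\cap\operatorname{acl}(C\bar a_0)=\operatorname{acl}(C)$, and hence in $C$:
\begin{itemize}
\item For $k\in\{0,\omega\}$, this holds because $\operatorname{acl}(C)=\Sigma\text{-}\mathrm{cl}(\operatorname{pr}(C))=C$.
\item For finite $k$, we have $\operatorname{acl}(C)=\Sigma\text{-}\mathrm{cl}(\operatorname{Ind}(\mathcal M)\cup C)$. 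The indecomposables needed to build a projection of $\bar a_0$ lie in $\operatorname{pr}(\bar a_0)\cap\operatorname{Ind}(\mathcal M)=\operatorname{pr}(\bar a_1)\cap\operatorname{Ind}(\mathcal M)\subseteq C$. This uses $\operatorname{Ind}(\mathcal M)\subseteq\operatorname{acl}(\emptyset)$ and $\bar a_0\equiv_{\operatorname{acl}^{eq}(e)}\bar a_1$.
\end{itemize}
For $\bar a_1$ the same holds by the very definition of $C$. Hence $\bar a'\equiv_{C\bar a_0}\bar a_1$, so $\bar a'\,E\,\bar a_0$. Your closing automorphism argument then yields $e\in\operatorname{dcl}^{eq}(C)$, with no appeal to trivial forking.

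Completed this way, your route is genuinely different from the paper's. The paper shows that every definable set has a smallest algebraically closed set of definition: intersections of algebraically closed sets are algebraically closed, plus an automorphism argument on a disjunctive normal form over $A$ versus $B$. Your route instead amounts to showing that canonical bases are coded by real elements.
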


In addition, we provide new proofs of the following facts, which were already obtained in \cite{Belegradek1988LocallyFree}.

\begin{theorem}
For each \(0\leq k\leq \omega\), the theory \(\mathcal{T}_k\) is not superstable.
\end{theorem}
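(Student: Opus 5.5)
We show that \(\mathcal{T}_k\) is not superstable by exhibiting, for each \(n\), a model with \(\lambda\) parameters over which there are more than \(\lambda\) complete \(1\)-types, for a cardinal \(\lambda\) with \(\lambda^{\aleph_0}>\lambda\). Equivalently, we build an infinite descending chain of forking extensions. We use the "projection" (destructor) structure. The language has at least one function symbol \(f\) of arity \(n\geq 1\); the case of only unary symbols needs a separate look, discussed below. In any locally free algebra, \(f\) is injective, and the relation \(x=f(y_1,\dots,y_n)\) determines \(y_1,\dots,y_n\) from \(x\). So the partial maps \(\pi_i: f(y_1,\dots,y_n)\mapsto y_i\) are \(\emptyset\)-definable. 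Our plan is to use these maps to code infinite branching paths through a tree.

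\textbf{Step 1: the types.} Work in a saturated model \(\mathbb{M}\models\mathcal{T}_k\). Fix a parameter \(a\); when \(k\ge1\) take \(a\) indecomposable, and when \(k=0\) take any element. For each finite sequence \(\sigma\in\omega^{<\omega}\), define terms \(t_\sigma(x)\) by iterating \(f\) with \(a\)-padding:
\[
t_{\langle\rangle}(x)=x,\qquad t_{\sigma^\frown m}(x)=f\bigl(t_\sigma(x), c_m,\dots\bigr),
\]
where \(c_m = g^m(a)\) is a distinct closed-ish term over \(a\) (for instance \(f(a,\dots,a)\) iterated \(m\) times). For each branch \(\eta\in\omega^\omega\), consider the partial type
\[
p_\eta(x)=\{\exists y\,(x=t_{\eta\restriction \ell}(y))\ :\ \ell<\omega\}.
\]
Each formula says \(x\) is "headed" by a specific pattern of depth \(\ell\). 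Any finite part of \(p_\eta\) is realized by \(t_{\eta\restriction\ell}(b)\) for any \(b\), so every \(p_\eta\) is consistent.

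Different branches are inconsistent. If \(\eta\neq\eta'\) first differ at level \(\ell\), then unique readability forces \(c_m=c_{m'}\) with \(m\neq m'\). This contradicts the freeness axioms, which say \(f\)-terms over \(a\) of different shapes are distinct; we use that \(g^m(a)\neq g^{m'}(a)\) by acyclicity. So we obtain \(2^{\aleph_0}\) pairwise contradictory types over the single parameter \(a\). Moreover, every formula involved uses only finitely many \(c_m\), all of which are \(a\)-definable.

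\textbf{Step 2: the count.} This alone gives \(2^{\aleph_0}\) types over a countable set, which is not enough to refute superstability. The standard fix is to build the tree with \(\lambda\) many branching choices at each level, indexed by \(\lambda\) parameters, namely distinct elements \(\{c_\alpha:\alpha<\lambda\}\), such as distinct terms or elements of a large model. Choose \(\lambda\) with \(\lambda^{\aleph_0}>\lambda\), e.g. \(\operatorname{cf}\lambda=\omega\). The same formulas \(\exists y\,(x=t_{\eta\restriction\ell}(y))\) for \(\eta\in\lambda^\omega\) then give \(\lambda^{\aleph_0}\) pairwise inconsistent types over a set of size \(\lambda\). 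Hence \(|S_1(A)|>|A|\), and \(\mathcal{T}_k\) is not superstable. Equivalently, the formulas \(\varphi_\ell(x;\bar c)\) witness an infinite dividing chain.

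\textbf{Main obstacle.} The delicate points are, first, checking that the divergence formulas are genuinely contradictory, which is unique readability plus the distinctness of the parameters. Second, a language whose only non-constant symbols are unary needs separate treatment, since then the branching would have to come from different symbols. For \(m\) unary symbols \(g_1,\dots,g_m\) with \(m\ge 2\), we instead branch on symbol choice and get only \(2^{\aleph_0}\) types with no parameters. That is still fine: \(\{\exists y\,x=w(y): w\) a prefix of \(\eta\}\) for \(\eta\in\{1,2\}^\omega\) gives \(2^{\aleph_0}\) types over \(\emptyset\), contradicting even \(\omega\)-stability-type counts. However, this still does not contradict superstability. For one unary symbol the theory is superstable, so we must assume, as the paper's setting presumably does, that the signature contains a symbol of arity at least \(2\) (or at least two unary ones, used with parameters as above). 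We would verify this hypothesis against the Preliminaries.
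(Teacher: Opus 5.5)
There is a genuine error in Step 1, and as written it destroys the count. Your recursion $t_{\sigma^\frown m}(x)=f(t_\sigma(x),c_m,\dots)$ adds each new layer on the \emph{outside}. For $\ell\ge1$ this gives $t_{\eta\upharpoonright(\ell+1)}(y)=f\bigl(t_{\eta\upharpoonright\ell}(y),c_{\eta(\ell)},\dots\bigr)$, whereas $t_{\eta\upharpoonright\ell}(y')=f\bigl(t_{\eta\upharpoonright(\ell-1)}(y'),c_{\eta(\ell-1)},\dots\bigr)$. By Axiom $A$, an element satisfying both $\exists y\,(x=t_{\eta\upharpoonright(\ell+1)}(y))$ and $\exists y\,(x=t_{\eta\upharpoonright\ell}(y))$ would have second $f$-coordinate equal to both $c_{\eta(\ell)}$ and $c_{\eta(\ell-1)}$. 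So the formulas along a branch are not nested, and $t_{\eta\upharpoonright\ell}(b)$ does not realize the finite parts of $p_\eta$. In fact $p_\eta$ is consistent only for the $\lambda$ constant branches, so the $\lambda^{\aleph_0}$ bound evaporates. The repair is to insert the new layer at the innermost variable, $t_{\sigma^\frown m}(x)=t_\sigma\bigl(f(x,c_m,\dots)\bigr)$. Then $\exists y\,(x=t_{\eta\upharpoonright(\ell+1)}(y))$ implies $\exists y\,(x=t_{\eta\upharpoonright\ell}(y))$. If $\eta,\eta'$ first differ at $\ell$, iterating Axiom $A$ strips the common outer part $t_{\eta\upharpoonright\ell}$ and leaves $f(y,c_{\eta(\ell)},\dots)=f(y',c_{\eta'(\ell)},\dots)$, which forces $c_{\eta(\ell)}=c_{\eta'(\ell)}$.

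With that repair the argument goes through, modulo two smaller corrections. First, Step 2 needs $\lambda\ge 2^{\aleph_0}$ as well as $\lambda^{\aleph_0}>\lambda$. A superstable theory in a countable language can have $|S_1(A)|=|A|+2^{\aleph_0}$, which is exactly why $\lambda=\aleph_0$ fails, so take $\lambda\ge 2^{\aleph_0}$ of cofinality $\omega$. Second, your parenthetical that two unary symbols ``with parameters'' suffice is false. With only unary symbols, a $1$-type over $A$ is either realized in $\mathrm{dcl}(A)$ or determined by the backward path of $x$, so $|S_1(A)|\le|A|+2^{\aleph_0}$; this is moot under the paper's blanket assumption of a symbol of arity $\ge2$.

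The corrected proof is a different packaging of the paper's argument. The paper proves a general criterion: a stable structure with a definable $f$ of arity $\ge2$ that is ``1-1'' over a Morley sequence of a non-algebraic type is not superstable. It does so via an infinite forking chain whose $(j+1)$-st formula pins one more argument of a nested $f$-decomposition to $a_{j+1}$. Each such formula divides because its conjugates along the Morley sequence are pairwise inconsistent; this is essentially your ``equivalently, a dividing chain'' remark. You instead branch over $\lambda$ arbitrary distinct parameters and count types via the stability-spectrum characterization. Your route avoids forking and Morley sequences, while the paper's is formula-level and applies to any stable theory with such a function. Incidentally, the paper's displayed chain $x=f(a_{n+1},f(a_n,\dots))$ also nests outward. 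It needs the same reversal, $x=f(a_1,f(a_2,\dots))$, for $\phi_{n+1}\to\phi_n$ to hold.
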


Moreover, we formulate a general criterion for non-superstability which appears to apply in a broad range of analogous contexts.

\begin{theorem}
For each \(0\leq k\leq \omega\), the theory \(\mathcal{T}_k\) is normal, and hence \(1\)-based.
\end{theorem}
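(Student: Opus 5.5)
The plan is to establish normality directly and then obtain $1$-basedness from the standard fact that normal stable theories are $1$-based; this is Pillay's notion, and by the remark above the paper uses $\mathcal{T}_k$ as stable. A stable theory is normal if every formula is \emph{normal}, meaning: for all $\varphi(x,\bar y)$ and all parameters $\bar a,\bar b$ in a monster model, if $\varphi(\mathfrak C,\bar a)\,\triangle\,\varphi(\mathfrak C,\bar b)$ is small (equivalently, $\varphi(x,\bar a)\wedge\neg\varphi(x,\bar b)$ does not fork over $\emptyset$ only when the sets are equal), then the two defined sets coincide. An equivalent formulation, more convenient here, is that every complete type over a model has its canonical base interalgebraic with a set of \emph{real} elements defined by a formula instance $\varphi(x,\bar c)$ whose parameter $\bar c$ is determined by the set $\varphi(\mathfrak C,\bar c)$ up to finitely many choices. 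I would therefore aim to show that each definable set $\varphi(x,\bar a)$ of the relevant kind, namely a positive Boolean combination of special formulas by Maltsev's quantifier elimination, has the property that two such instances with \emph{generic-level agreement} actually coincide.

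First I use Maltsev's quantifier elimination to reduce to special formulas. In one free variable, a special formula in $x$ over $\bar a$ is built from term equations $t(x,\bar a)=s(x,\bar a)$ and the predicates asserting that an element lies in the image of a given basic operation $f$. An atomic equation $t(x,\bar a)=s(\bar a)$ with $x$ occurring in $t$ has at most one solution, since term algebras are injective on terms and $x$ is determined by unification. Hence an atomic formula in which $x$ genuinely occurs is either finite, and then algebraic, or equivalent to $x=x$ or to a fixed equation of $x$ with a term over $\bar a$. Consequently every definable subset of a $1$-dimensional sort is a finite Boolean combination of finite sets and sets of the form $\{x : x \text{ has prescribed top-level shape}\}$, that is, cylinders $\{x: x = u(\bar z, \bar a)\text{ for some }\bar z\}$ given by a term pattern.

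The key step is to show that such cylinder sets $C_u=\{u(\bar z,\bar a):\bar z\}$ have the normality property: if $C_u(\bar a)$ and $C_{u}(\bar b)$ differ by a set not containing any generic element, then they are equal. The approach is to examine the generic type of $C_u(\bar a)$, which takes the $\bar z$ independent over $\bar a$ and of "maximal" free rank, for instance indecomposables outside $\mathrm{acl}(\bar a)$, or deeply nested generic elements when $k$ is finite or $0$. Equality of the generic types of $C_u(\bar a)$ and $C_{u'}(\bar b)$ forces the patterns to agree syntactically: a generic element of $C_u(\bar a)$ uniquely decomposes, so its decomposition reveals $\bar a$ at the positions of $u$. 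Hence the parameters can be recovered from any generic point, and this gives equality of the sets.

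For the general case in several variables and positive Boolean combinations, I would argue similarly with tuples, using the fact that unique readability gives canonical bases consisting of the real parameter subterms appearing in patterns. The expected obstacle is handling generic types correctly when $k$ is finite or $0$, since there are not enough indecomposables and genericity must instead be expressed via "depth" or non-membership in $\mathrm{acl}$. For this I would rely on Belegradek-style projections, which allow decomposing into subterms definably and uniformly.
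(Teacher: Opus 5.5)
\textbf{There is a genuine gap: you are aiming at the wrong property.} A formula $\varphi(\bar x;\bar y)$ is normal when any two of its instances are either equal or disjoint. In the paper's formulation, every automorphic conjugate of $\varphi(\mathcal{M},\bar b)$ equals $\varphi(\mathcal{M},\bar b)$ or is disjoint from it. A theory is normal when every formula is equivalent to a \emph{Boolean combination} of normal formulas. Your definition departs from this in three ways:
\begin{itemize}
\item The condition ``small symmetric difference forces equality'' is not normality.
\item ``Every formula is normal'' is false in every $\mathcal{T}_k$. Take $x=y_1\vee x=y_2$ with instances $\{a,b\}$ and $\{a,c\}$.
\item The canonical-base reformulation is not an equivalent of normality. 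It is closer to weak elimination of imaginaries plus $1$-basedness, which is the conclusion you want to reach.
\end{itemize}
Consequently your key step, that a generic point of $C_u(\bar a)$ reveals $\bar a$, does not show that distinct instances of a cylinder formula are disjoint, even if it were fully established. The obstacle you anticipate, expressing genericity when $k$ is finite or $0$, is an artifact of this wrong target. Normality needs no genericity at all.

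\textbf{What is missing is the elementary observation the paper uses.} After full quantifier elimination in the projection expansion $\Sigma^*$, every formula $\varphi(\bar x,\bar y)$ is a Boolean combination of atomic formulas $\pi_1(x_i)=\pi_2(y_j)$, together with ones involving only $\bar x$ or only $\bar y$. Each of these is normal because $\pi_1$ is a function. The sets $\{\bar x:\pi_1(x_i)=\pi_2(b_j)\}$ and $\{\bar x:\pi_1(x_i)=\pi_2(b'_j)\}$ coincide if $\pi_2(b_j)=\pi_2(b'_j)$ and are disjoint otherwise.

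Your unique-readability remark could be repurposed to give a genuine alternative via Maltsev's elimination. Apply it to an \emph{arbitrary} common point of $C_u(\bar a)$ and $C_u(\bar b)$, not a generic one. Reading the point top-down along $u$ with axioms $A_i$, $B_{ij}$ forces $a_i=b_i$ at every parameter address. Hence the cylinder formulas $\exists\bar z\,(x=u(\bar z,\bar y))$ are normal.

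You would then still have to show that Maltsev's standard formulas are Boolean combinations of such normal formulas. This means handling several free variables $\bar x$ sharing the witnesses $\bar z$, the conjuncts $x_j\neq u_j$ and $y_h\neq v_h$, and the $N_s$-conditions. Your one-variable reduction plus ``argue similarly with tuples'' does not do this. Once that is in place, Hrushovski--Pillay gives $1$-basedness, and stability as well, so you need not assume it.
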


\begin{theorem}
For each \(0\leq k\leq \omega\), the theory \(\mathcal{T}_k\) has trivial forking. In particular, no infinite group is interpretable in any model of the theory.
\end{theorem}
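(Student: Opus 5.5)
We prove the theorem in two steps: first that forking is trivial, using the normality theorem stated just above together with an explicit description of forking over algebraically closed sets; and then that trivial forking rules out infinite interpretable groups.

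\textbf{Step 1: describing forking.} We work in a monster model $\mathbb{M}$ of $\mathcal{T}_k$. For a set $A$ we write $\langle A\rangle$ for the subalgebra it generates. The natural candidate for $\operatorname{acl}(A)$ is the \emph{closure} of $\langle A\rangle$ under the projections $\pi_{f,i}$ (the definable partial maps sending $f(x_1,\dots,x_n)$ to $x_i$), together with the indecomposables when $k$ is finite. By the positive quantifier elimination down to special formulas, the type of a tuple $\bar a$ over an algebraically closed $C$ should be determined by three pieces of data:
\begin{itemize}
\item the term ``skeleton'' of $\bar a$ over $C$, i.e.\ how each $a_i$ decomposes via the function symbols until the decomposition reaches elements of $C$ or elements not in $C$ whose decompositions are infinite;
\item the equalities among those leaves;
\item which leaves lie in $C$.
\end{itemize}
We then claim: $\bar a \mathop{\smile\!\!\!\!\!\!\;\mid}_C B$ if and only if $\operatorname{acl}(C\bar a)\cap\operatorname{acl}(CB)=C$. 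For the direction ``intersection equal to $C$ implies nonforking'', we argue via stationarity/normality. Given $\operatorname{acl}(C\bar a)\cap \operatorname{acl}(CB)=C$, we realize the type by placing the non-$C$ leaves of $\bar a$ ``generically'', for example at elements starting infinite descending chains disjoint from $\operatorname{acl}(CB)$. Every such realization has the same type over $CB$. This yields a $C$-invariant extension, hence a nonforking one. Conversely, if some $e\in \operatorname{acl}(C\bar a)\cap\operatorname{acl}(CB)\setminus C$ exists, then $e$ is algebraic over $CB$ but not over $C$. This is a standard witness of forking, because $\operatorname{acl}$-dimension cannot drop under nonforking extensions in a stable theory.

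\textbf{Step 2: triviality.} Trivial forking means: if $\bar a,\bar b,\bar c$ are pairwise independent over $C$, then they are independent over $C$. By Step 1 this reduces to a lattice statement about algebraically closed sets. Suppose
$$\operatorname{acl}(C\bar a)\cap\operatorname{acl}(C\bar b)=C,\qquad \operatorname{acl}(C\bar a)\cap\operatorname{acl}(C\bar c)=C,$$
and we want $\operatorname{acl}(C\bar a)\cap\operatorname{acl}(C\bar b\bar c)=C$. The key structural fact to verify is that
$$\operatorname{acl}(C\bar b\bar c)=\operatorname{acl}(C\bar b)\cup\operatorname{acl}(C\bar c)\cup(\text{new terms}),$$
and that every new element has, at some level of its decomposition, a subterm lying outside $\operatorname{acl}(C\bar b)\cup\operatorname{acl}(C\bar c)$. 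Consequently, no element of $\operatorname{acl}(C\bar a)\setminus C$ can be such a new term, since by projection-closure all of its subterms would have to lie in $\operatorname{acl}(C\bar a)$. The point is unique readability: a new term decomposes into pieces coming from each side, and projecting down isolates a piece that is not in $C$ and lies in $\operatorname{acl}(C\bar b)$ or $\operatorname{acl}(C\bar c)$. That piece would then belong to $\operatorname{acl}(C\bar a)$, contradicting pairwise independence. The main obstacle is the precise identification of $\operatorname{acl}$ and the verification that the type over $CB$ of a ``generically placed'' realization is uniquely determined. This requires careful handling of the case $k=0$, where infinite descending decomposition chains exist and $C$ may be empty.

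\textbf{Step 3: no infinite interpretable group.} This is a known consequence: in a stable theory with trivial forking, no infinite group is interpretable. In an infinite type-definable group $G$ (after passing to $\mathcal{T}^{\mathrm{eq}}$, where triviality is preserved), take independent generics $g,h$ over a small $C$. Then $g$, $h$ and $gh$ are pairwise independent over $C$ but not independent, since $gh\in\operatorname{dcl}(g,h)$ and $gh$ is non-algebraic. This contradicts triviality. For the passage to imaginaries we will use the weak elimination of imaginaries theorem stated above: a tuple of imaginaries is interalgebraic with a real tuple, so triviality transfers to $\mathcal{T}^{\mathrm{eq}}$.
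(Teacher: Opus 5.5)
Your argument is correct in outline, but it differs from the paper at the key step. The paper never characterizes forking directly. Assuming $\bar a$ is not independent from $\bar b\bar c$ over $A$, it does three things:
\begin{enumerate}
\item It invokes $1$-basedness (normality plus Hrushovski--Pillay) to get an element of $\operatorname{acl}^{eq}(A\bar a)\cap\operatorname{acl}^{eq}(A\bar b\bar c)\setminus\operatorname{acl}^{eq}(A)$.
\item It uses weak elimination of imaginaries to replace that element by a real one.
\item It uses the description of $\operatorname{acl}$ as the $\Sigma$-closure of projections to find a common projection $\pi_1(a_i)=\pi_2(b_j)$ (or with some $c_j$), contradicting pairwise independence.
\end{enumerate}
You instead prove, for algebraically closed $C$, that $\bar a$ is independent from $B$ over $C$ if and only if $\operatorname{acl}(C\bar a)\cap\operatorname{acl}(CB)=C$. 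You do this from quantifier elimination alone, without $1$-basedness or weak EI. This is exactly the consequence of $1$-basedness together with weak EI that the paper extracts, so your route re-derives what the paper imports, and gives an explicit description of forking as a by-product.

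Your Step 2 is then the same unique-readability step as the paper's final paragraph, but carried out over an algebraically closed base. That makes it cleaner: the witness must lie outside $\operatorname{acl}(A)$, not merely outside $\operatorname{pr}(A)$ as the paper writes. Your Step 3 also supplies what the paper leaves implicit for the group corollary, namely the transfer of triviality to $\mathcal{T}^{eq}$ via weak EI.

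The load-bearing claim in your route is the uniqueness assertion, which you flag but do not prove: all realizations $\bar a'$ of $tp(\bar a/C)$ with $\operatorname{acl}(C\bar a')\cap\operatorname{acl}(CB)=C$ have the same type over $CB$. It does hold, via quantifier elimination in $\Sigma^*$. Take the projection-closed subalgebras $X=\operatorname{acl}(C\bar a)$ and $D=\operatorname{acl}(CB)$ with $X\cap D=C$. Unique readability shows that $\Sigma\text{-}\mathrm{cl}(X\cup D)$ is the free amalgam of $X$ and $D$ over $C$: if $x\in X$ equals a term over $X\cup D$, every element of $D$ occurring in it is a projection of $x$, hence lies in $C$. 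So the $\Sigma^*$-isomorphism $X\to X'$ over $C$ extends by the identity on $D$.

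With uniqueness in hand you need neither the construction of ``generically placed'' realizations nor the invariant-type argument. A nonforking extension of $tp(\bar a/C)$ to $CB$ exists by stability, and it satisfies the intersection condition by your easy direction; by uniqueness, $\bar a$ realizes it. This matters because the leaves-and-skeleton picture is not accurate in saturated models. The decomposition tree of an element outside $C$ is in general infinite, and its type records coincidences at every depth. In $\mathcal{T}_\omega$ a non-$C$ leaf may be indecomposable, so it cannot start an infinite descending chain.

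Finally, in Step 2 the claim that every new element has a subterm ``outside'' $\operatorname{acl}(C\bar b)\cup\operatorname{acl}(C\bar c)$ should say that some leaf of the term lies in $(\operatorname{pr}(\bar b)\cup\operatorname{pr}(\bar c))\setminus C$. That is what your closing sentences actually use.
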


\begin{theorem}
For each \(1\leq k\leq \omega\), the theory \(\mathcal{T}_k\) is not model complete.
\end{theorem}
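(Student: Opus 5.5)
To show that $\mathcal{T}_k$ is not model complete for $1\leq k\leq\omega$, I will find a model $\mathcal{M}\models\mathcal{T}_k$ and an elementary extension-free embedding: a substructure $\mathcal{M}\subseteq\mathcal{N}$ with both models of $\mathcal{T}_k$, but $\mathcal{M}\not\preceq\mathcal{N}$. Indecomposability is the obvious witness. The formula
\[
\mathrm{Ind}(x):=\bigwedge_{f}\neg\exists \bar y\,(x=f(\bar y))
\]
is universal-negated-existential. It is preserved downward but not upward along embeddings.

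\textbf{Step 1: the case $k<\omega$.} Take $\mathcal{N}=\mathcal{F}_k$ with generators $a_1,\dots,a_k$. Fix a function symbol $f$ of arity $n\geq 1$; such a symbol exists, since otherwise the standard model is trivial. Let $b=f(a_1,\dots,a_1)$. Let $\mathcal{M}$ be the subalgebra generated by $b,a_2,\dots,a_k$.

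\emph{Freeness.} By the universal property of absolutely free term algebras, $\mathcal{M}$ is free on these $k$ elements. This holds because distinct terms in $b,a_2,\dots,a_k$ evaluate to distinct elements of $\mathcal{F}_k$. The reason is unique readability: substituting the term $f(a_1,\dots,a_1)$ for a variable is injective on terms, since $b$ is not itself a term in $a_2,\dots,a_k$.

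\emph{Conclusion.} So $\mathcal{M}\cong\mathcal{F}_k\models\mathcal{T}_k$. In $\mathcal{M}$ the element $b$ is indecomposable: if $b=g(\bar c)$ with $\bar c\in\mathcal{M}$, then reading in $\mathcal{F}_k$ forces $g=f$ and $\bar c=(a_1,\dots,a_1)$, but $a_1\notin\mathcal{M}$. In $\mathcal{N}$, however, $b$ is decomposable. Hence $\mathcal{M}\models\mathrm{Ind}(b)$ while $\mathcal{N}\models\neg\mathrm{Ind}(b)$, so the embedding is not elementary.

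The fact $a_1\notin\mathcal{M}$ also follows from unique readability. Every element of $\mathcal{M}$ is either some $a_i$ with $i\geq 2$, or a term whose outermost symbol is applied to elements of $\mathcal{M}$. Since $a_1$ is a generator, it is neither.

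\textbf{Step 2: the case $k=\omega$.} The same construction works. Take generators $a_1,a_2,\dots$, and let $\mathcal{M}$ be generated by $f(a_1,\dots,a_1),a_2,a_3,\dots$. Then $\mathcal{M}\cong\mathcal{F}_\omega$ by the same argument.

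\textbf{Expected obstacle.} The main work is verifying rigorously that $\mathcal{M}$ is free on the new generators, so that it is a model of $\mathcal{T}_k$, and that $b$ stays indecomposable in $\mathcal{M}$. I will do this by induction on term complexity, using the unique readability of $\mathcal{F}_k$. The earlier result that $\mathcal{F}_k$ is the standard model of $\mathcal{T}_k$, via the axiomatization's parametrization by the number of indecomposables, then gives $\mathcal{M}\models\mathcal{T}_k$ directly.
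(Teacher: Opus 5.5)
Your proof is correct and is essentially the paper's argument: the paper also takes the subalgebra of $\mathcal{F}_\kappa$ generated by $f_0(a_1,\ldots,a_1)$ and the remaining generators. It observes that this subalgebra is isomorphic to $\mathcal{F}_\kappa$, and that it is not elementary because $\exists y\,(f_0(y,\ldots,y)=f_0(a_1,\ldots,a_1))$ holds in $\mathcal{F}_\kappa$ but fails in the subalgebra. You additionally supply the freeness verification that the paper calls ``immediate''; your list of possible elements of $\mathcal{M}$ omits $b$ itself, which is harmless because $b$ is decomposable in $\mathcal{F}_k$ and hence $b\neq a_1$.
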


\begin{theorem}
The theory \(T_0\) is the model companion of the theory of locally free algebras.
\end{theorem}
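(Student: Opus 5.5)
The plan is to verify the two defining conditions of a model companion separately: mutual model-consistency (every locally free algebra embeds in a model of \(T_0\), and conversely), and model completeness of \(T_0\). The inclusion in one direction is immediate, since \(T_0\) extends the axioms of locally free algebras by the axioms saying that every element is decomposable, i.e.\ \(\forall x\,\bigvee_f \exists \bar y\, x=f(\bar y)\). So every model of \(T_0\) is itself a locally free algebra.

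\textbf{Embedding step.} Given a locally free algebra \(\mathcal A\), I would build a chain \(\mathcal A=\mathcal A_0\subseteq \mathcal A_1\subseteq\cdots\) and take \(\mathcal B=\bigcup_n \mathcal A_n\). The step from \(\mathcal A_n\) to \(\mathcal A_{n+1}\) goes as follows. Fix a function symbol \(f\) of positive arity \(m\); one exists, otherwise there are no decomposable elements at all and \(T_0\) is inconsistent. For each indecomposable \(a\in\mathcal A_n\), adjoin fresh elements \(b^a_1,\dots,b^a_m\) and declare \(a=f(b^a_1,\dots,b^a_m)\). Then close off freely: \(\mathcal A_{n+1}\) is the algebra generated by \(\mathcal A_n\) together with the new elements, subject only to these relations.

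The key verification is that \(\mathcal A_{n+1}\) is again locally free and contains \(\mathcal A_n\) as a subalgebra. This means checking three things:
\begin{itemize}
\item injectivity of the basic operations;
\item disjointness of their ranges;
\item the acyclicity axioms \(t(\bar x)\neq x\) for nontrivial terms.
\end{itemize}
I would do this by a normal-form argument. Elements of \(\mathcal A_{n+1}\) are represented by terms over \(\mathcal A_n\cup\{b^a_i\}\) in which no subterm \(f(b^a_1,\dots,b^a_m)\) appears (it is rewritten as \(a\)). One shows these normal forms are unique. Then any cycle in \(\mathcal A_{n+1}\) would project to a cycle in \(\mathcal A_n\), because the \(b\)'s are new and indecomposable. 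Each element of \(\mathcal A_n\) is decomposable in \(\mathcal A_{n+1}\), so every element of \(\mathcal B\) is decomposable. Since the axioms are universal or \(\forall\exists\), they are preserved under unions of chains, and hence \(\mathcal B\models T_0\).

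\textbf{Model completeness.} Here I would use Maltsev's elimination of quantifiers, assumed from Section~\ref{Prel}: modulo the theory, every formula is equivalent to a positive Boolean combination of special formulas. I would inspect the list of special formulas. Those expressing solvability of term equations (of the form \(\exists\bar y\,\bigwedge t_i(\bar x,\bar y)=s_i(\bar x,\bar y)\)) are existential. The only non-existential ones are those asserting indecomposability (or its relativized, universally quantified variants such as ``\(x\) is not of the form \(t(\bar y)\)''). In \(T_0\) these are equivalent to \(\bot\), or to quantifier-free/existential formulas, because every element is decomposable, so in particular \(\neg\exists\bar y\,x=t(\bar y)\) reduces to a finite disjunction of existential formulas describing the other possible head symbols. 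A positive Boolean combination of existential formulas is existential. Hence every formula is \(T_0\)-equivalent to an existential formula, which gives model completeness.

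This step is the main obstacle: I must check carefully that the negated special formulas really reduce to existential ones in \(T_0\). The likely failure point is the ``not of the form \(t(\bar y)\)'' conditions for deep terms \(t\), which I would handle by induction on \(t\) using decomposability at each level. The same analysis explains the contrast with \(T_k\) for \(k\geq1\), where indecomposability is a genuinely universal condition.
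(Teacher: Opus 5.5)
Your proposal is correct and follows the paper's proof. The paper builds a chain of extensions in which the indecomposables of each stage become decomposable in the next; you build each link by an explicit free adjunction, where the paper simply invokes compactness. Model completeness is then obtained from Maltsev's quantifier elimination, replacing $N_i(y)$ modulo $T_0$ by the existential formula $\bigvee_{j\neq i}\neg N_j(y)$.

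Two small remarks. First, Maltsev's standard formulas involve only the one-level conditions $N_s(y_r)$, so the induction on deep terms that you flag as the main obstacle is not needed. Second, your normal forms must also evaluate inside $\mathcal A_n$ every subterm $g(\bar c)$ with $\bar c$ from $\mathcal A_n$, not only rewrite $f(b^a_1,\dots,b^a_m)$ as $a$; otherwise normal forms are not unique.
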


Finally, we expect the paper to have some pedagogical value for readers approaching model theory for the first time: locally free algebras provide a concrete arena in which a number of fundamental notions and dividing lines can be seen ``in the wild.'' With this goal in mind, we have occasionally expanded the exposition to include background material that will be familiar to most model theorists, but may be helpful to newcomers.

\section{Preliminaries}\label{Prel}

In this section we define term algebras and record some of their basic properties. Fix a purely functional signature (with constants)
\[
\Sigma=\{\,f_0,\ldots,f_k,\; c_0,c_1,\ldots\,\},
\]
where each $f_i$ is a function symbol of arity $n_i$ and the $c_j$ are constant symbols. We assume that $\Sigma$ contains at least one constant symbol and that at least one of the function symbols has arity $\geq 2$. When $\Sigma$ has exactly $n\in\mathbb{N}$ constant symbols, we sometimes write $\Sigma_n$ in place of $\Sigma$; if $\Sigma$ has infinitely many constants, we may write $\Sigma_\omega$.

\subsection{Terms and ground terms}
The set of \emph{terms} over $\Sigma$ is defined inductively as follows:
\begin{itemize}
\item Every variable $x$ and every constant symbol $c$ is a term.
\item If $t_1,\ldots,t_\ell$ are terms and $f$ is a function symbol of arity $\ell$, then $f(t_1,\ldots,t_\ell)$ is a term.
\end{itemize}
In this paper we will only consider \emph{ground terms}, i.e.\ terms with no variables. Let $F$ denote the set of all ground terms. We view $F$ as the universe of a $\Sigma$-structure $\mathcal{F}$ by interpreting each function symbol $f\in\Sigma$ of arity $\ell$ via
\[
f^{\mathcal{F}}(t_1,\ldots,t_\ell)\ :=\ f(t_1,\ldots,t_\ell)
\qquad(t_1,\ldots,t_\ell\in F).
\]
To simplify notation, we write $f$ in place of $f^{\mathcal{F}}$ throughout. Thus,
\[
\mathcal{F}\;=\;\bigl(F;\ f_0^{n_0},\ldots,f_k^{n_k},\ c_0,c_1,\ldots \bigr).
\]
The algebra $\mathcal{F}$ is generated by the constants $c_0,c_1,\ldots$. If $\Sigma$ has exactly $n\in\mathbb{N}$ constant symbols we write $\mathcal{F}_n$ for $\mathcal{F}$; if $\Sigma$ has infinitely many constants we may also write $\mathcal{F}_\omega$.

The algebras $\mathcal{F}_n$ are called \emph{term algebras} (or \emph{ground term algebras}). They are \emph{absolutely free} in the following sense: for every $\Sigma$-algebra $\mathcal{A}$ generated by the values of the constant symbols, there exists a surjective homomorphism $\mathcal{F}\twoheadrightarrow \mathcal{A}$.

Let $Alg(\Sigma)$ denote the class of all $\Sigma$-algebras generated by the values of the constant symbols. If $\mathcal{B}\in Alg(\Sigma)$ has the property that for every $\mathcal{A}\in Alg(\Sigma)$ there exists a surjective homomorphism $\mathcal{B}\twoheadrightarrow \mathcal{A}$, then $\mathcal{B}\cong\mathcal{F}$. Hence $\mathcal{F}$ is, up to isomorphism, the unique universal object in $ Alg(\Sigma)$.

\subsection{Reducts}
We may remove the constant symbols from the language and consider the reduct of $\mathcal{F}$ to the function symbols only. By an abuse of notation, we again denote this reduct by $\mathcal{F}$. Accordingly, for $m\in\omega$ the algebras $\mathcal{F}_m$ are finitely generated. Note that all these reducts have the same purely functional signature
\[
\Sigma^{r}=(f_0^{n_0},\ldots,f_k^{n_k}),
\]
which, by a further abuse of notation, we will continue to denote by $\Sigma$.

\begin{proposition}\label{prop:term-algebras-iso-elem}
Let $1\leq m,n\leq\omega$. Then the term algebras $\mathcal{F}_m$ and $\mathcal{F}_n$ are isomorphic if and only if they are elementarily equivalent.
\end{proposition}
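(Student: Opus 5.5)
The forward direction is immediate, since isomorphic structures are elementarily equivalent. For the converse, I will show that $\mathcal{F}_m\equiv\mathcal{F}_n$ forces $m=n$. Here the algebras are reducts to the purely functional signature $\Sigma$, so the constants are no longer named. Once $m=n$ is established, the two algebras are literally the same term algebra, and hence isomorphic.

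The key is to express ``there are exactly $j$ indecomposable elements'' in first-order logic. Call $a$ \emph{decomposable} if
\[
\bigvee_{i}\exists y_1\ldots y_{n_i}\,\bigl(a=f_i(y_1,\ldots,y_{n_i})\bigr),
\]
and \emph{indecomposable} otherwise. This is a first-order formula $\iota(x)$, because the signature is finite. In $\mathcal{F}_m$ every ground term is either one of the generators $c_0,\ldots$ or of the form $f_i(t_1,\ldots,t_{n_i})$. Moreover, by unique readability of terms, no generator equals any term $f_i(\bar t)$. It follows that the indecomposable elements of $\mathcal{F}_m$ are exactly the $m$ generators.

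Now suppose $m\neq n$, and without loss of generality $m<n$, so $m$ is finite. The sentence ``there are exactly $m$ elements satisfying $\iota$'' is then first-order. It holds in $\mathcal{F}_m$ and fails in $\mathcal{F}_n$, since $\mathcal{F}_n$ has at least $m+1$ indecomposables. This holds whether $n$ is finite or $\omega$. Hence $\mathcal{F}_m\not\equiv\mathcal{F}_n$.

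The only point needing care is the identification of the indecomposables with the generators. This rests on unique readability of ground terms, and on the fact that distinct constant symbols denote distinct elements of $F$. I expect no real obstacle beyond stating these facts cleanly.
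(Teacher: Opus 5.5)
Your proof is correct and follows essentially the same approach as the paper: the first-order sentence asserting that there are exactly $m$ indecomposable elements holds in $\mathcal{F}_m$ and fails in $\mathcal{F}_n$ when $m\neq n$. Your version is slightly more complete than the paper's. You justify that the indecomposables are exactly the generators, and you explicitly handle the case $n=\omega$, which the paper's proof (written only for $m,n\in\omega$) leaves implicit.
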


\begin{proof}
Assume first that $m,n\in\omega$. The algebra $\mathcal{F}_m$ satisfies the first-order sentence asserting that \emph{there exist exactly $m$ elements $x_1,\ldots,x_m$ such that none of them is $f$-decomposable for any $f\in\Sigma$}. This sentence fails in $\mathcal{F}_n$ whenever $n\neq m$. Hence $\mathcal{F}_m\not\equiv\mathcal{F}_n$ for $m\neq n$, and the forward implication follows.

Conversely, if $\mathcal{F}_m\equiv\mathcal{F}_n$, then necessarily $m=n$ by the previous paragraph, and thus $\mathcal{F}_m\cong\mathcal{F}_n$.
\end{proof}

\begin{corollary}\label{rigid}
Let $1\leq m<\omega$. Then $\mathcal{F}_m$ is first-order rigid: if $\mathcal{A}$ is finitely generated and $\mathcal{A}\models Th(\mathcal{F}_m)$, then $\mathcal{A}\cong\mathcal{F}_m$.
\end{corollary}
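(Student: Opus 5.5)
The plan is to use only finitely many first-order properties of \(\mathcal{F}_m\) that pass to \(\mathcal{A}\), and then to show directly that \(\mathcal{A}\) is freely generated by its indecomposable elements. Since \(\mathcal{A}\models Th(\mathcal{F}_m)\), the following sentences, all true in \(\mathcal{F}_m\), hold in \(\mathcal{A}\):
\begin{enumerate}
\item[(i)] each \(f\in\Sigma\) is injective;
\item[(ii)] distinct function symbols have disjoint ranges;
\item[(iii)] for every term \(t(x)\) in which \(x\) occurs properly, i.e.\ \(t\neq x\), we have \(\forall x\,(t(x)\neq x)\);
\item[(iv)] there are exactly \(m\) indecomposable elements, i.e.\ elements lying in the range of no \(f\in\Sigma\).
\end{enumerate}
By (i) and (ii), every decomposable \(a\in\mathcal{A}\) has a unique decomposition \(a=f(b_1,\dots,b_\ell)\). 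I call each \(b_j\) an \emph{immediate component} of \(a\). Let \(I\) be the set of the \(m\) indecomposables.

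The key step, and the one I expect to be the main obstacle, is showing that every element of \(\mathcal{A}\) lies in the subalgebra generated by \(I\). I will argue by contradiction, using finite generation. Fix a finite generating set \(S\). Suppose some \(a_0\) has an infinite descending chain \(a_0,a_1,a_2,\dots\), where each \(a_{i+1}\) is an immediate component of \(a_i\). (If no such chain exists, then by K\"onig's lemma every decomposition tree is finite, since each node has finitely many children. Every tree then bottoms out in \(I\), and we are done.)

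To rule out an infinite chain, assign to each \(a_i\) the minimal height \(h_i\) of a term over \(S\) representing it. Suppose \(a_i\notin S\). Then a minimal representing term has the form \(g(u_1,\dots,u_\ell)\). By uniqueness of decomposition, \(g\) is the function symbol of the decomposition of \(a_i\), and \(a_{i+1}\) is the value of some \(u_j\), so \(h_{i+1}<h_i\). Hence the heights strictly decrease along the chain until the chain meets \(S\). An infinite chain therefore meets the finite set \(S\) infinitely often, so \(a_i=a_j\) for some \(i<j\). Composing the decompositions between positions \(i\) and \(j\) then gives a term \(t(x)\neq x\) and a tuple of other parameters with \(a_i=t(a_i,\bar c)\). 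This contradicts (iii), read as the acyclicity axiom with parameters, \(\forall x\,\forall\bar y\,(t(x,\bar y)\neq x)\), which holds in \(\mathcal{F}_m\).

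It remains to show the generation by \(I\) is free. Enumerate \(I=\{e_1,\dots,e_m\}\), and let \(\pi\colon\mathcal{F}_m\to\mathcal{A}\) be the homomorphism sending the \(m\) generators to the \(e_i\). By the previous step, \(\pi\) is surjective. For injectivity I will prove, by induction on \(\max\) of the heights of terms \(s,t\), that \(\pi(s)=\pi(t)\) implies \(s=t\):
\begin{itemize}
\item if one of \(s,t\) is a generator, its image lies in \(I\), which is disjoint from all ranges of the \(f\)'s, so both must be the same generator;
\item otherwise (ii) forces \(s\) and \(t\) to have the same outermost symbol, (i) gives equality of the corresponding arguments, and induction finishes.
\end{itemize}
Thus \(\pi\) is an isomorphism and \(\mathcal{A}\cong\mathcal{F}_m\).
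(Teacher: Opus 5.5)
Your proof is correct, but it takes a different route from the paper. The paper records the statement as a corollary of Proposition~\ref{prop:term-algebras-iso-elem} without a written proof. The intended argument runs as follows: $\mathcal{A}$ satisfies $A_i$, $B_{ij}$, $C_t$, so it is locally free by Maltsev's axiomatization theorem (cited but not proved in the paper). Being finitely generated, it is then absolutely free of some finite rank, $\mathcal{A}\cong\mathcal{F}_n$, and $\mathcal{F}_n\equiv\mathcal{F}_m$ forces $n=m$ by counting indecomposables.

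You instead prove the outsourced step directly for finitely generated algebras. Minimal heights over $S$ force any infinite chain of immediate components back into $S$ infinitely often. Pigeonhole then gives a cycle $a_i=t(a_i,\bar c)$, which $C_t$ rules out. K\"onig's lemma then shows that $\mathcal{A}$ is generated by its indecomposables, freely by $A_i$ and $B_{ij}$. The rank comparison disappears because you map the free generators straight onto the $m$ indecomposables supplied by $D_m$.

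The paper's route is shorter, given Maltsev's theorem. Yours is self-contained and shows exactly where finite generation and the acyclicity schema are used. The same argument also shows that every finitely generated model of $A_i$, $B_{ij}$, $C_t$ is free on its finitely many (at least one) indecomposables. This recovers the paper's side remark that $\mathcal{T}_0$ and $Th(\mathcal{F}_\omega)$ have no finitely generated models.

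One small correction: your opening claim to use ``only finitely many'' first-order properties is inaccurate. Item (iii) is an infinite schema, and you need all of it in the parametrized form $\forall x\,\forall\bar y\,(t(x,\bar y)\neq x)$, since finitely many instances cannot exclude long cycles. This is harmless, because $\mathcal{A}$ satisfies all of $Th(\mathcal{F}_m)$.
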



One of the useful concepts is the notion of heights of the terms:
\begin{definition}
Let $t$ be a term. Then 
\begin{itemize}
\item if $t$ is a variable, say $t=x$, then  $height(t)=0$;
\item if $t=f(t_1, \ldots, t_k)$,   then $height(t)=max\{height(t_1),\ldots,height(t_k)\}+1$. 
\end{itemize}
\end{definition}
\subsection{Maltsev's Quantifier Elimination}
Maltsev proved a quantifier elimination result which we next present.

It is easy to see that one can express in first-order logic that there exist at least $m$ indecomposable elements, we call this sentence $E^m$. Apparently $D^m:=\lnot E^{m+1}$ expresses that there are at most $m$ indecomposable elements. Finally, $N_i(x)$ is the first-order formula that expresses that $x$ is not $f_i$-decomposable.   

Following Maltsev we give the following definition. 

\begin{definition}
A first-order formula $\phi(\bar x)$ in the signature $\Sigma$ is called standard, if it has the following form. 
\begin{equation}\label{specialformulas}
			\exists y_1,\ldots,y_m\left(\bigwedge_{i\in I} x_i=t_i\wedge \bigwedge_{j\in J}x_j\neq u_j \wedge\bigwedge_{h\in H}y_h\neq v_h\wedge \bigwedge_{r\in R}\bigwedge_{s\in S_r} N_s(y_r)\right)
		\end{equation}
\end{definition}

\begin{fact}\cite[Theorem 3]{Malcev71}\label{MalcevQE}
In the class of locally free $\Sigma$-algebras, every first-order formula in the signature $\Sigma$ is equivalent to a positive Boolean combination of formulas in the following families: standard formulas, $D^m$, $E^m$, and $N_i(x)$. 
\end{fact}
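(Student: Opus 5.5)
The plan is to prove the statement by the usual closure argument. Let $\mathcal{P}$ be the class of formulas equivalent, in every locally free $\Sigma$-algebra, to a positive Boolean combination of standard formulas, the sentences $D^m,E^m$, and the formulas $N_i(x)$. Atomic formulas $x=t$ are standard: take $m=0$ and, after introducing fresh existential variables, reduce $t=t'$ to this form. So it suffices to show that $\mathcal{P}$ is closed under $\wedge,\vee,\exists$ and $\neg$. Closure under $\wedge,\vee$ is immediate once two standard formulas are merged by renaming their bound variables. Since $\exists$ distributes over $\vee$, closure under $\exists$ reduces to one case: $\exists x$ applied to a conjunction of standard formulas and $N_i$'s, which is again a single existential block. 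Here I use the locally free axioms: each $f_i$ is injective, distinct $f_i$ have disjoint ranges, and $t(x)\neq x$ for every nonvariable term $t$.

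\textbf{Step 1: unification.} Using these axioms, run the standard unification algorithm on the equational part of a conjunction. Decomposition $f(\bar s)=f(\bar s')\leftrightarrow \bar s=\bar s'$ uses injectivity. A clash $f(\bar s)=g(\bar s')$ with $f\neq g$ is false by disjoint ranges. The occurs check $x=t(x)$ is false by acyclicity. Thus every finite conjunction of equations is either inconsistent or equivalent to a solved form $\bigwedge x_i=t_i$, where the left-hand variables are distinct and occur nowhere else. If the quantified variable is solved, substitute it out; otherwise it simply becomes one of the $y$'s. Either way we land back in the shape \eqref{specialformulas}. The same procedure shows that the literals $N_s$ and $\neq$ can be pushed through substitutions: $N_s(f(\bar t))$ is true or false outright, and $f(\bar t)\neq f'(\bar t')$ unfolds into a disjunction of inequalities between subterms. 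Hence $\mathcal{P}$ is closed under $\exists$.

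\textbf{Step 2: negation.} This is the main obstacle. By De Morgan it suffices to negate a single standard formula and a single $N_i(x)$.

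For $\neg N_i(x)$, I would write $\exists\bar y\, x=f_i(\bar y)$, which is standard.

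For $\neg\exists\bar y\,\varphi$, I would first show that the inequality and $N$-part can be eliminated modulo sentences counting indecomposables. The key lemma says: given the free variables, the satisfiability of $\bigwedge y_h\neq v_h\wedge\bigwedge N_s(y_r)$ in $\bar y$ is equivalent to a positive combination of $E^m$, $D^m$ and conditions on the parameters. The reason is that, for every pattern of $N$-constraints, the set of admissible $y$ is either infinite or consists of indecomposables. If it is infinite, for example all $y=f(z,\dots)$ with $z$ arbitrary, injectivity and acyclicity give infinitely many choices. Each inequality $y\neq v(\bar y,\bar x)$ excludes at most one value once the other variables are fixed, so the constraints are satisfiable whenever there are more candidates than inequalities. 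Only the case where $y$ must be indecomposable is sensitive to the number of indecomposables, and that is exactly what $E^m$ and $D^m$ express.

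After this reduction, a standard formula is equivalent to a disjunction of pieces of the form \[\exists\bar y\,\Bigl(\bigwedge x_i=t_i(\bar y)\Bigr)\wedge\bigwedge x_j\neq u_j\] together with counting sentences. Negating such a piece gives disjunctions of inequalities $x_j\neq u_j$, which are standard, and formulas $\neg\exists\bar y\, x=t(\bar y)$. For the latter, I would use the fact that $x=f(t_1,\dots,t_\ell)$ fails either because $x$ is not $f$-decomposable, that is $N_f(x)$, or because $x=f(\bar z)$ with some $z_p$ failing to match $t_p$. This recursion on $height(t)$ ends in positive combinations of $N_i$'s, standard formulas, and inequalities $z_p\neq z_q$ that arise from repeated variables in $t$. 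Repeated variables also force one to handle the simultaneous constraints on the $x_i$ via the solved form.

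I expect this bookkeeping — above all the counting lemma for indecomposables and checking that the negated solved forms stay \emph{positive} combinations — to be the delicate part.
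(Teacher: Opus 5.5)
Your architecture is sound: closure of $\mathcal{P}$ under $\wedge,\vee,\exists,\neg$, using unification for $\exists$ and a pattern-complement recursion plus a counting argument for $\neg$. Step~1 is fine. Note that the paper gives no proof of this Fact; it is imported from Maltsev.

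The gap is in Step~2. You claim that, after the counting lemma, a standard formula becomes a disjunction of pieces $\exists\bar y\,(\bigwedge x_i=t_i(\bar y))\wedge\bigwedge x_j\neq u_j$ together with counting sentences. This is false. In $\mathcal{F}_1$ with generator $e$, and $f\in\Sigma$ of arity $n$, the standard formula $\exists y\,(x=f(y,\dots,y)\wedge N(y))$ defines the singleton $\{f(e,\dots,e)\}$. On the other hand, there are no constants, so every $t_i$ contains a variable. Choosing the witnesses pairwise distinct and of widely separated heights satisfies every disequation that is not trivially false. So each of your pieces defines $\emptyset$ or an infinite set, and counting sentences are constant in a fixed model.

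The underlying issue is that witnesses occurring in the $t_i$ are uniquely determined by $\bar x$ (they are projections of it). Hence $N_s$-literals and disequalities on them are genuine conditions on $\bar x$ and cannot be eliminated. Your counting lemma also outputs such conditions, usually negated: in $\mathcal{T}_1$, $\exists y\,(N(y)\wedge x\neq f(y,\dots,y))$ is equivalent to $\neg\exists y\,(x=f(y,\dots,y)\wedge N(y))$. Your negation step never handles these. Since they are themselves negated standard formulas, the argument as written is circular.

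The fix is to stratify. Call a standard formula \emph{determined} if every bound variable occurs in some $t_i$. Then witnesses are unique, so, writing $\tau$ for the equational part,
\[
\neg\exists\bar y\,\Bigl(\tau\wedge\bigwedge_k\lambda_k\Bigr)\;\equiv\;\neg\exists\bar y\,\tau\;\vee\;\bigvee_k\exists\bar y\,(\tau\wedge\neg\lambda_k).
\]
Here each $\neg\lambda_k$ is either an equation (absorbed by unification) or $\exists\bar z\,(y_r=f_s(\bar z))$ (substituted in), and $\neg\exists\bar y\,\tau$ is handled by your height recursion. Hence negations of determined standard formulas are positive combinations of determined standard formulas and $N_i(x)$'s.

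Only then formulate the counting lemma: every standard formula is equivalent to a positive combination of $E^m$'s, determined standard formulas and their negations. Concretely, this is a disjunction over the finitely many configurations of the relevant projections of $\bar x$ (which are indecomposable, which coincide, which $x_j$ have the shape of $u_j$), each paired with a suitable $E^m$. Closure under $\neg$ then follows, using also $\neg E^m\equiv D^{m-1}$ and $\neg D^m\equiv E^{m+1}$.

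A minor slip: negating one of your pieces produces equalities $x_j=u_j$, not inequalities.
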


The above result implies that any two locally free algebras with the same finite number of indecomposable elements or any two such algebras with infinitely many indecomposable elements (independent of cardinality) are elementarily equivalent. We will give a more elementary proof of this corollary by using Ehrenfeucht-Fraisse games (see Theorem \ref{IndecomposableEquivalentElementary}). 

As a side note, we observe that the first-order theory of $\mathcal{F}_\omega$ as well as the first-order theory of locally free algebras without indecomposable elements do not admit finitely generated models. In the first case a finitely generated model would be a free algebra of finite rank, and in the second case we would have that a generator, say $a_1$ should be decomposed as $f(\bar a)$ where $\bar a$ cannot include $a_1$, continuing repeatedly choosing an element from $\bar a$ we will run out of elements we can use in finitely many steps. Hence Corollary \ref{rigid} does not apply to these theories.

\section{Model theory}\label{section 3}

Recall that we have fixed a (purely functional) signature $\Sigma:=\{f_0, \ldots, f_k\}$. We fix a cardinal $\kappa$ and, following Maltsev, we give a list of natural axioms for the first-order theory of the term algebra of rank $\kappa$. Maltsev used his quantifier elimination result in order to prove that this list indeed axiomatizes the first-order theory of such algebra.  We, on the other hand, use Ehrenfeucht-Fra\"iss\'e games. In particular we will show that all term algebras of infinite rank have the same first-order theory. In contrast, for any two finite cardinals $m,n$, the first-order theories of $\mathcal{F}_m$ and $\mathcal{F}_n$ differ.

Let $\mathcal{F}_\kappa$ be the term algebra of rank $\kappa$. Then it satisfies the following axioms:

For each $0\leq i\leq k$, we have:

\begin{equation}
\tag{Axiom $A_i$}
\forall \bar x,\bar y \big(f_i(\bar x)=f_i(\bar y)\rightarrow (x_1=y_1\land\ldots\land x_{n_i}=y_{n_i})\big)
\end{equation}

For every $0\leq i<j\leq k$, we have:

\begin{equation}
\tag{Axiom $B_{ij}$}
\forall \bar x,\bar y \big(f_i(\bar x)\neq f_j(\bar y)\big)
\end{equation}

Finally for every term $t(x,x_1,\ldots,x_n)$, in variables $x,x_1,\ldots,x_n$, which is different from $x$ and $x$ occurs in it, we have: 

\begin{equation}
\tag{Axiom $C_t$}
\forall  x, x_1,\ldots, x_n \big(t(x,x_1,\ldots,x_n)\neq x\big)
\end{equation}

It is not hard to prove that the above (infinite due to Axioms $C_t$) list of axioms is complete for the class of locally (absolutely) free algebras, i.e. algebras for which every finitely generated subalgebra is (absolutely) free (see \cite[Chapter 23: Theorem 1]{Malcev71}). In addtion, Maltsev also proved that the previously mentioned class is not finitely axiomatizable. 

In order to axiomatize the first-order theory of $\mathcal{F}_m$, for $m$ finite (including $0$), one needs to add an axiom that says that there exist exactly $m$ "indecomposable" elements. For the sake of clarity we first define the following first-order formulas. 
An element $a$ in some algebra $\mathcal{A}$ is {\em $f_i$-indecomposable}, if $\mathcal{A}\models\forall y_1,\ldots, y_{n_i}\big(a\neq f_i(y_1,\ldots,y_{n_i})\big)$. Let $N_i(x):=\forall y_1,\ldots, y_{n_i}\big(x\neq f_i(y_1,\ldots,y_{n_i})\big)$ and
\begin{equation}\label{eq.1 - def - N(x)}
    N(x):=N_0(x)\land N_1(x)\land\ldots\land N_k(x).
\end{equation}
\noindent
An element $a$ in some algebra $\mathcal{A}$ is  {\em indecomposable} if $\mathcal{A}\models N(a)$.

\begin{equation}
\tag{Axiom $D_m$}
\begin{split}
\exists x_1,\ldots, x_m \big(\bigwedge_{1\leq i<j\leq m} (x_i\neq x_j)  \land 
N(x_1)\land\ldots N(x_m)\big) \bigwedge \\
\forall x_1,\ldots, x_m, x_{m+1}\big(\lnot N(x_1)\lor \ldots \lor \lnot N(x_m)\lor \lnot N(x_{m+1})\big)
\end{split}
\end{equation}

If $\kappa$ is an infinite cardinal, then in order to axiomatize the first-order theory of $\mathcal{F}_\kappa$ we need to add an infinite list of axioms, for each $r<\omega$ we add:  

\begin{equation}
\tag{Axiom $E_r$}
\exists x_1,\ldots, x_r \big(\bigwedge_{1\leq i<j\leq r} (x_i\neq x_j)  \land 
N(x_1)\land\ldots N(x_r)\big)
\end{equation}

We will prove the next theorem using (unnested) Ehrenfeucht-Fra\"iss\'e games. We follow \cite[Section 3.3]{HodgesBook}. We first develop some terminology that will facilitate the proof. 

\begin{definition}
Let $a$ be an element in a (locally free) algebra $\mathcal{A}$. The $n$-skeleton of $a$, denoted $sk_n(a)$, is the term of height $n$, $t(\bar x)$, such that there is $\bar b$ in $A$ with $t(\bar b)=a$ and $\bot$ otherwise. 

Moreover, the $n$-downward closure of $a$, denoted $dc_n(a)$, is the set of elements of the tuple $\bar b$ in case $sk_n(a)\neq\bot$ and $\emptyset$ otherwise. 

We finally denote $dc_{\leq n}(a)=\bigcup_{0\leq m\leq n} dc_n(a)$. We call this the $n$-comprehensive downward closure. We sometimes omit $n$ when it is clear from the context. 
\end{definition}

\begin{remark}
It follows from the axioms of locally free algebras that the above notions are well defined (up to ignoring the "names" of the variables). 
\end{remark}

We bring an example for the convenience of the reader. 
\begin{example}
Let $a\in \mathcal{A}$. The $0$-skeleton of $a$ is $sk_0(a)=x$ and the $0$-downward closure is  $dc_0(a)=a$. On the other hand, if $a=f(b_1, h(b_2, b_3),g(b_4, b_5))$, then $sk_2(a)=f(x_1, h(x_2,x_3), g(x_4,x_5))$, $dc_2(a)=\{b_1, b_2, b_3, b_4, b_5\}$, and $dc_{\leq 2}(a)=\{b_1, b_2, b_3, b_4, b_5, h(b_2,b_3),$ $g(b_4, b_5), a\}$. In the case all the $b_i$'s are indecomposable we get that $sk_3(a)=\bot$.
\end{example}

\begin{theorem}\label{IndecomposableEquivalentElementary}
Let $0\leq m<\omega$. Then the following list of axioms $(A_i)_{i\leq k}$, $(B_{ij})_{0\leq i<j\leq k}$, $(C_t)_{t\neq x}$ and $D_m$ is complete. 
\end{theorem}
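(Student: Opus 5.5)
To prove completeness I will show that any two models $\mathcal{A},\mathcal{B}$ of the axioms $(A_i)$, $(B_{ij})$, $(C_t)$ and $D_m$ are elementarily equivalent. By L\"owenheim--Skolem and the fact that elementary equivalence is preserved under elementary extensions, I may replace $\mathcal{A},\mathcal{B}$ by $\omega$-saturated elementary extensions, or simply work in arbitrary models. In the (unnested) Ehrenfeucht--Fra\"iss\'e game $EF_n(\mathcal{A},\mathcal{B})$ of \cite[Section 3.3]{HodgesBook}, I then exhibit a winning strategy for Duplicator for every $n$. Unnested atomic formulas are of the form $x=y$ and $f_i(\bar x)=y$. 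So after $r$ rounds with chosen tuples $\bar a,\bar b$, Duplicator must preserve exactly these relations among the chosen elements.

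The strategy is a back-and-forth system indexed by a decreasing ``depth budget''. Let $d_n$ be a rapidly decreasing sequence, for instance $d_{r}=2^{\,n-r+1}$ when $r$ rounds remain. I maintain the following invariant: there is a partial isomorphism $p$ between the subalgebra-like sets $D_A=\bigcup_j dc_{\le d}(a_j)$ and $D_B=\bigcup_j dc_{\le d}(b_j)$ with $d=d_r$. This $p$ maps $a_j\mapsto b_j$, respects all equalities $f_i(\bar u)=v$ inside these sets, and respects skeletons. Precisely, for each $u\in D_A$ and each $e\le d$ with $u\in dc_{\le d-e}(a_j)$, we require $sk_e(u)=sk_e(p(u))$, i.e.\ the same decomposition tree up to depth $e$, and $p$ maps the $e$-downward closures correspondingly. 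Moreover, $p$ sends indecomposables to indecomposables and decomposable leaves to decomposable leaves. Because of Axioms $A_i$ and $B_{ij}$, skeletons are unique, so $p$ is well defined. Axioms $C_t$ guarantee that these finite ``trees'' of elements contain no cycles, so the relevant finite configurations are forests in which some nodes coincide.

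For Spoiler's move, say he picks $a\in\mathcal{A}$. I compute $dc_{\le d'}(a)$ with $d'=d_{r-1}$, which is much smaller than $d$. There are two cases. If this tree meets $D_A$ within a shallow depth, then the overlap is already mapped by $p$, thanks to the slack $d-d'$, and I extend $p$ by building the remaining nodes in $\mathcal{B}$ upward using the function symbols. If the tree is disjoint from $D_A$ (apart from possibly far-away connections that the reduced budget no longer sees), I must find in $\mathcal{B}$ a fresh element with the same $d'$-skeleton pattern, the same coincidence pattern among its nodes, the same indecomposable leaves, and decomposable leaves otherwise. Its leaves and all its nodes must avoid $D_B$. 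Indecomposable leaves are handled by $D_m$: both models have exactly the same $m$ indecomposables, and these can be matched up front by adding them as the first moves, or by keeping them in $p$ from the start. Fresh decomposable leaves are obtained as $f(\bar c)$ with $c$ chosen of large height above everything in $D_B$, e.g.\ $f_j(u,u,\ldots)$ iterated on a chosen element. Axiom $C_t$ and $A_i$ ensure these are new and create no unwanted equalities. Since some $f_j$ has arity $\ge 2$, infinitely many such distinct elements exist, even when $m=0$.

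The main obstacle is the bookkeeping in the first case. Spoiler's new element may connect to the existing configuration at depth close to $d$, where the skeletons of $D_A$ and $D_B$ are no longer guaranteed to agree. Choosing $d_{r-1}\le d_r/2$ (or smaller) ensures that any connection found within depth $d_{r-1}$ from $a$ lands in the part of $D_A$ where $p$ still preserves skeletons to depth at least $d_{r-1}$. I expect most of the work to be verifying that the extended map remains a partial isomorphism on unnested atomic formulas, i.e.\ that no accidental equalities $f_i(\bar u)=v$ appear in $\mathcal{B}$ without their counterpart in $\mathcal{A}$. This again reduces to uniqueness of decomposition (Axioms $A_i,B_{ij}$) and acyclicity (Axioms $C_t$).
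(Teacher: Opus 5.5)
Your strategy is in substance the paper's: copy skeletons of shrinking depth, keep comprehensive downward closures as the inventory, and match the indecomposables up front. Two of your changes are actually necessary. First, with $g(x):=f(x,\dots,x)$, Spoiler can test $x=g^{\ell}(y)$ in about $\log_2\ell$ rounds by bisection, so a linearly shrinking depth such as the paper's $sk_{r-(i+1)}$ is too shallow and the budget must shrink geometrically. (If $r$ counts remaining rounds you want $d_r=2^{r+1}$, not $2^{n-r+1}$.) Second, decomposable leaves must be fresh rather than random.

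However, the step you single out as the crux is false. Halving the budget does not force overlaps to be shallow in $D_A$. Let $v$ be a decomposable node at depth exactly $d$, and no smaller depth, below $a_1$; your invariant constrains $p(v)$ essentially only at the top level. Let Spoiler play $a:=v$ itself (or $f(v,\dots,v)$). Then $dc_{\le d'}(a)$ meets $D_A$ at depth $0$ from $a$, but at the very bottom of $D_A$. Your first case answers $p(v)$. The arguments of $v$ and of $p(v)$ need not have the same shape or equality pattern, e.g.\ $v=f(w,\dots,w)$ while $p(v)=f(u_1,u_2,\dots)$ with $u_1\neq u_2$. Spoiler then wins in one or two further moves by playing the arguments of $v$.

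The repair is to measure an overlap by its depth below the earlier $a_j$, not below $a$. Send by $p$ every node of $dc_{\le d'}(a)$ that is indecomposable or lies within depth about $d-2d'$ of some earlier $a_j$, together with everything beneath it in $dc_{\le d'}(a)$. Rebuild every other node freshly, even one lying in $D_A$; so $v$ above receives an image different from $p(v)$. This is consistent once roughly $d\ge 3d'$: rebuilt nodes then lie outside $\bigcup_j dc_{\le d'}(a_j)$, so the discarded values $p(v)$ never have to be honoured again. The slack must be $2d'$ rather than $d'$ because a node can lie almost $d'$ below a close node and still be a child of $a$.

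A second problem: your invariant that $p$ respects every equality $f_i(\bar u)=v$ inside $D_A$ cannot be maintained with fresh leaves. Take $a_1=f(w,\dots,w,g^{d-1}(y))$ with $y=g(w)$. The leaf $y$ at depth $d$ and $w$ at depth $1$ satisfy $g(w)=y$ inside $D_A$, and a fresh image of $y$ violates this. Require the relation only when $v$ lies at depth $<d$ below some $a_j$. That is all the final position uses, provided the last budget is $\ge 1$.

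Finally, in a nonstandard model there is no height to make elements fresh. Instead use the iterates $g^{\ell}(u)$, which are pairwise distinct by $A_i$ and $C_t$, so all but finitely many avoid any given finite set.
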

\begin{proof}
We will show that if two $\Sigma$-structures, $\mathcal{A}$ and $\mathcal{B}$, satisfy the axioms of the hypothesis, they are elementarily equivalent. Equivalently, we need to show that for each $r<\omega$, Player II has a winning strategy in the $r$-round game $EF_r[\mathcal{A}, \mathcal{B}]$. We recall that unnested atomic formulas in $\Sigma$ are formulas of the form, $x=y, f_0(x_1, \ldots, x_{n_0})=y, \ldots, f_k(x_1,\ldots, x_{n_k})=y$. The strategy is as follows: first of all, by Axiom $D_m$, the structures $\mathcal{A}$ and $\mathcal{B}$ have the same number of indecomposable elements, namely $m$. We choose a "random" correspondence between the two sets of indecomposable elements, unless $m=0$, in which case this first step is not necessary. 

At every round of the game Player II copies the skeleton (of depth that depends on the round) of the choice of Player I (independent of which structure player I chooses) and generates an element in the respective structure by choosing the generating elements according to the isomorphism of the indecomposable elements but also the "inventory" (to be defined) of the previous round.

We define the strategy recursively:
\begin{itemize}
\item At round 1, suppose Player I plays element $a_1$ from $\mathcal{A}$. Player II copies $sk_r(a_1)$ and checks $dc_r(a_1)$ (if $sk_r(a_1)=\bot$, then it copies $sk_l(a)$ for the largest $l\leq r$ for which $sk_l(a_1)\neq\bot$). For any element in $dc_r(a_1)$ which is indecomposable Player II will replace it with the corresponding indecomposable element in $\mathcal{B}$. On the other hand,  for any decomposable element in $dc_r(a_1)$, Player II will choose at random a decomposable element in $\mathcal{B}$. 
At this point, we extend the isomorphism between the indecomposable elements to include the corresponding choices of this round among all elements between $dc_{\leq r}(a_1)$ and $dc_{\leq r}(b_1)$, where $b_1$ is the actual answer of Player II. The inventory of the first round $I_1$ is the data that consists of: $dc_{\leq r}(a_1), dc_{\leq r}(b_1)$ the isomorphism between the indecomposable elements and the isomorphism between $dc_{\leq r}(a_1)$ and $ dc_{\leq r}(b_1)$. Note that the strategy would be the same if Player I had chosen an element from $\mathcal{B}$. Also, it is important that we keep in the inventory all $dc_{\leq r}(a_1)$ and $ dc_{\leq r}(b_1)$ and not just $dc_{r}(a_1)$ and $ dc_{r}(b_1)$.

\item At round $i+1$, suppose Player I plays element $a_{i+1}$ from $\mathcal{A}$. Player II copies the skeleton $sk_{r-(i+1)}(a_i)$ (or the largest skeleton less than $r-(i+1)$ which is not $\bot$) and checks $dc_{r-(i+1)}(a_i)$, he then uses the inventory of the previous move, $I_i$, and chooses generating elements accordingly. The new inventory, $I_{i+1}$, is $I_i$ together with the new elements in $dc_{\leq r-(i+1)}(a_i)$, $dc_{\leq r-(i+1)}(b_i)$, where $b_i$ is the actual answer of Player II for this round, and the isomorphism between them.
\end{itemize}

We now prove that this strategy works, i.e. it is indeed a winning strategy for Player II.

For a warm up we start with the unnested formula $x=y$. Suppose $a_i=a_j$, and without loss of generality we may assume that $i>j$. Then, since the inventory of the $i-1$-round contains $a_j$ and the comprehensive downward closure of an element in later rounds is contained in the comprehensive downward closure of the element in earlier rounds (since the depth decreases as we move forward in the game), we get that $b_i$ admits a construction as $b_j$ (independent of whether Player I played $b_i$ or $a_i$). Hence $b_i=b_j$.   

We move to the case $y=f(x_1, \ldots, x_n)$. Suppose $a_j=f(a_{i_1}, a_{i_2}, \ldots, a_{i_n})$, for some $j\leq r$ and $f$ some function symbol of arity $n$ from $\Sigma$. Notice that $a_j$ cannot appear in $f$. Also for any $a_i$, with $i<j$, $a_i$ belongs to the inventory of the $j-1$-th round and since $sk_1(a_j)=f(x_1,\ldots,x_r)$, we have that $sk_1(b_j)=f(x_1, \ldots,x_r)$. In addition, as before, the comprehensive downward closure of an element in later rounds is contained in the comprehensive downward closure of the element in earlier rounds (since the depth decreases as we move forward in the game). This implies that if $a_1$, for example, appears in $f$, say in the $l$-th argument, then $b_1$ must appear in the $l$-th argument of $f$ in the construction of $b_j$ as an image of $f$ (independent of whether Player I played $a_j$ or $b_j$). 

We, thus, need to show that if $a_i$, for $i>j$, appears in $f$, then $b_i$ must also appear in the same argument of $f$ in the corresponding construction of $b_j$. Suppose $a_i$ is the $l$-th argument of $f$ (it may appear in more than one places but this will not make any difference). The $j-1$ comprehensive downward closure of $a_i$ will be contained in the $I_j$ inventory and it will have a corresponding element $b'$ from the structure $\mathcal{B}$. We now take cases according to whether Player I plays $a_i$ or $b_i$. Assume first that in the $i$-round Player I plays element $a_i$ from $\mathcal{A}$, then $a_i$ is determined (since it appears in the inventory of a previous round) i.e. its construction does not require any free choices. In particular, $b_i$ will be constructed using only elements from the inventory and thus, it must be equal to $b'$ which is the $l$-th argument of $f$ as we wanted. Next assume that Player I plays element $b_i$ from $\mathcal{B}$ and assume $b_i$ is not equal to $b'$. On the other hand, the $(r-i)$-skeleton of $b_i$ and $a_i$ are the same and the $(r-i)$-downward closure of $a_i$ consists of elements in correspondence with the $(r-i)$-downward closure of $b_i$. We claim that  the $(r-i)$-skeleton of $b_i$ must be the same as the $(r-i)$-skeleton of $b'$. Indeed, both must be equal to the $(r-i)$-skeleton of $a_i$ which is equal to the $(r-i)$-skeleton of the $l$-th argument of $f$ in the analysis of $a_j$. Hence, there must be an element in the $(r-i)$-downward closure of $b_i$ that differs from that of $b'$. In particular, since there is a correspondence between the elements of the $(r-i)$-downward closure of $b'$ and those of $a_i$ (given in the $I_j$ inventory), we must have that there is an element in the construction of $a_i$ in the $i$-th move that differs from its construction as the $l$-th argument of $f$ in the analysis of $a_j$ in the $j$-th round, in particular $a_j\neq f(a_{i_1}, a_{i_2}, \ldots, a_{i_n})$, a contradiction.    
    
Notice that it is crucial in the strategy that Player II copies the skeleton of lower and lower depth as the game moves forward. Indeed, if independent of the round Player II kept copying the $r$-skeleton of the choice of Player I, then it can be easily seen that Player I may win.  

\end{proof}

\begin{example}
We bring two examples that show the necessity of choosing the details of the above strategy:   
\ \begin{itemize}
\item First assume that in the inventory we only collect the downward closures and not the comprehensive downward closures. Consider a $3$ round game. In the first round Player I plays $a_1=g(g(f(c_1, c_2)))$ from $\mathcal{A}$, hence Player II replies with $b_1=g(g(f(d_1,d_2)))$, with $d_i$ indecomposable if and only if $c_i$ is. The $3$-downward closures are $\{c_1, c_2\}$ and $\{d_1, d_2\}$, while the comprehensive closures are $\{c_1, c_2, f(c_1, c_2), g(f(c_1, c_2)), g(g(f(c_1, c_2))) \}$ and $\{d_1, d_2, f(d_1,d_2), g(f(d_1,d_2)), g(g(f(d_1,d_2)))\}$. In the second round Player I chooses some indecomposable element and Player II does likewise. In the third round Player I chooses $a_3=g(f(c_1,c_2))$ and the $1$-downward closure of $a_3$ is $\{f(c_1,c_2)\}$ which is an element that does not belong to the inventory, hence Player II chooses randomly an element not in the inventory, in particular he might well choose an element not equal to $f(d_1, d_2)$. But then $g(a_3)=a_1$ while $g(b_3)$ is not necessarily equal to $b_1$.
\item The next example shows that we necessarily need to decrease the depth of the skeleton that Player II copies as the game move forward. So, assume we always choose to copy the $r$-skeleton independent of the round. Consider a $2$-round game. In the first round Player I plays $a_1=g(g(c))$ from $\mathcal{A}$, where $c=f(c_1, c_2)$ and the $c_i$'s are decomposable, then Player II will reply with $b_1=g(g(d))$ where $d$ is some "random" indecomposable element from $\mathcal{B}$. In the second round Player I plays $a_2=g(c)$ and Player II replies with $g(f(d_1, d_2))$, where $d_1, d_2$ are chosen arbitrarily. In particular, it might be the case that $f(d_1, d_2)$ is not $d$. Thus $a_1=g(a_2)$, but $b_1$ is not necessarily $g(b_2)$.      
\end{itemize}
\end{example}

The only reason we separated the following statement is notational. The same proof works.

\begin{theorem}
Let $\kappa$ be an infinite cardinal. Then the following list of axioms $(A_i)_{i\leq k}$, $(B_{ij})_{0\leq i<j\leq k}$, $(C_t)_{t\neq x}$ and $(E_r)_{r<\omega}$ is complete. 
\end{theorem}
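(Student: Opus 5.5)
We follow the proof of Theorem~\ref{IndecomposableEquivalentElementary}, changing only the bookkeeping of indecomposables. Let $\mathcal{A},\mathcal{B}$ be two $\Sigma$-structures satisfying $(A_i)$, $(B_{ij})$, $(C_t)$ and all $(E_r)_{r<\omega}$. It suffices to show that for every $r<\omega$ Player II has a winning strategy in the unnested game $EF_r[\mathcal{A},\mathcal{B}]$; by \cite[Section 3.3]{HodgesBook} this yields $\mathcal{A}\equiv\mathcal{B}$. Completeness then follows: any two models are elementarily equivalent, and the list is consistent because $\mathcal{F}_\kappa$ satisfies it.

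The one place where Axiom $D_m$ was used is the first step. There, a bijection between the two finite sets of indecomposables was fixed in advance. Here both sets are infinite, but they may have different cardinalities, since $\kappa$ is arbitrary and nothing bounds the number of indecomposables in an arbitrary model. We therefore cannot fix a global bijection. Instead we build the correspondence between indecomposables \emph{partially and on demand}, as part of the inventory.

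At round $i$, Player I chooses $a_i$, say from $\mathcal{A}$. Player II copies the appropriate skeleton $sk_{r-i}(a_i)$ exactly as before and looks at its downward closure.
\begin{itemize}
\item An indecomposable element that is already in the inventory is sent to its recorded partner.
\item A new indecomposable element is sent to a new indecomposable of $\mathcal{B}$ not yet used, and this pair is added to the inventory.
\end{itemize}
Such a choice is always available. Each round adds at most finitely many elements to the inventory, with a bound depending only on $r$ and the arities. So after finitely many rounds only finitely many indecomposables of $\mathcal{B}$ have been used, and by $(E_r)$ for all $r$ infinitely many remain. Decomposable elements of the downward closure that are not in the inventory are handled as before: we pick decomposable elements of $\mathcal{B}$ outside the inventory. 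Some such element must exist, because $f(e,e)$ for any indecomposable $e$ is decomposable, and only finitely many elements are excluded.

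The verification of the unnested atomic formulas $x=y$ and $y=f(x_1,\ldots,x_n)$ is then word for word the argument given for Theorem~\ref{IndecomposableEquivalentElementary}. That argument only used two facts, and both still hold:
\begin{itemize}
\item the inventory is a partial isomorphism between finite, downward closed sets, respecting indecomposability and the skeleton structure;
\item comprehensive downward closures computed in later rounds, which use smaller depth, are contained in those of earlier rounds.
\end{itemize}

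The main point to check is that the on-demand extension keeps the inventory injective and consistent with indecomposability. It must never map a new element onto something already in the inventory. Otherwise the case analysis for $y=f(\bar x)$, where Player I plays from $\mathcal{B}$, could break down. This is why we always choose fresh elements outside the finite inventory. The infinitude of indecomposables and the abundance of decomposables ensure that fresh elements exist on both sides, and this is the only input that distinguishes the proof from the finite-$m$ case.
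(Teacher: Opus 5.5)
This is essentially the paper's proof: the paper disposes of the infinite case by saying that the Ehrenfeucht--Fra\"iss\'e argument for the finite-$m$ case goes through unchanged, and you reuse that argument in the same way. Your one modification is to match indecomposables partially and on demand rather than by a global bijection, which need not exist when the two models have different numbers of indecomposables. That is the adjustment needed for ``the same proof'' to apply literally, and the paper's remark that the separation is merely notational glosses over it.
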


The value of the above propositions is that one can prove the competeness of the above theories without appealing to the quantifier elimination of Maltsev. 

We next consider term algebras with respect to some basic model theoretic properties, such as homogeneity, atomicity, saturation, etc  

\begin{proposition}
The term algebra of rank $\omega$, $\mathcal{F}_\omega$, is atomic. 
\end{proposition}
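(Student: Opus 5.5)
The plan is to exhibit, for every finite tuple $\bar a=(a_1,\ldots,a_\ell)$ of $\mathcal{F}_\omega$, a single formula isolating $\mathrm{tp}(\bar a)$ in $Th(\mathcal{F}_\omega)$. Each $a_i$ is a ground term, so it is built from finitely many generators. Let $c_1,\ldots,c_n$ be the distinct generators occurring in $\bar a$; these are exactly the indecomposables involved. Write $a_i=t_i(c_1,\ldots,c_n)$ for terms $t_i(y_1,\ldots,y_n)$. The candidate isolating formula is
\[
\theta(\bar x):=\exists y_1,\ldots,y_n\Bigl(\bigwedge_{1\le j<j'\le n} y_j\neq y_{j'}\ \wedge\ \bigwedge_{j=1}^n N(y_j)\ \wedge\ \bigwedge_{i=1}^{\ell} x_i=t_i(\bar y)\Bigr),
\]
with $N$ as in \eqref{eq.1 - def - N(x)}. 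Clearly $\mathcal{F}_\omega\models\theta(\bar a)$.

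It then suffices to show the following: if $\mathcal{M}\models Th(\mathcal{F}_\omega)$ and $\mathcal{M}\models\theta(\bar b)$, then $\mathrm{tp}^{\mathcal{M}}(\bar b)=\mathrm{tp}^{\mathcal{F}_\omega}(\bar a)$. Choose witnesses $d_1,\ldots,d_n$ in $\mathcal{M}$, which are distinct indecomposables with $b_i=t_i(\bar d)$. I will prove $(\mathcal{F}_\omega,\bar c)\equiv(\mathcal{M},\bar d)$. Since $\bar a$ and $\bar b$ are given by the same terms in the named constants, this yields equality of types.

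To get $(\mathcal{F}_\omega,\bar c)\equiv(\mathcal{M},\bar d)$, I will rerun the Ehrenfeucht--Fra\"iss\'e argument of Theorem~\ref{IndecomposableEquivalentElementary} (in its infinite-rank version) for the games $EF_r[(\mathcal{F}_\omega,\bar c),(\mathcal{M},\bar d)]$. The key observation is that in that strategy the initial correspondence between the sets of indecomposable elements was chosen arbitrarily (``at random''). Here both structures have infinitely many indecomposables by the axioms $E_r$. So I can fix a correspondence that sends $c_j\mapsto d_j$ for each $j$ and matches the remaining infinitely many indecomposables bijectively in any way.

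The winning argument then goes through verbatim, because it used only that the correspondence is a bijection of indecomposables. Equivalently, one may view the constants as moves already played at the start: the initial inventory contains $c_j\leftrightarrow d_j$, and the depth of the skeletons involved is $0$, so no interaction with the decreasing-depth bookkeeping occurs.

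The main obstacle is precisely this step: checking that prescribing the correspondence on finitely many indecomposables does not break the strategy. In particular, the new unnested atomic formulas $x=c_j$ must be preserved. This holds because $c_j$ and $d_j$ are indecomposable and paired in the correspondence, so a move equal to $c_j$ has $0$-skeleton $x$ and is answered by $d_j$, and conversely. A sanity check inside $\mathcal{F}_\omega$ itself is that any injection of generators extends to an automorphism, so all realizations of $\theta$ in $\mathcal{F}_\omega$ are automorphic. Isolation, however, is a statement about the theory, which is why the EF argument over arbitrary models $\mathcal{M}$ is used.
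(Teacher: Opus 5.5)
Your isolating formula $\theta$ is the same as the paper's, $\exists\bar y\,\bigl(B_m(\bar y)\wedge\bar x=\bar t(\bar y)\bigr)$, where $B_m$ says the $y_i$ are pairwise distinct indecomposables. The paper verifies isolation by exactly the argument you call a ``sanity check''. Every realization of $\theta$ in $\mathcal{F}_\omega$ is $\bar t(\bar e')$ for some tuple $\bar e'$ of distinct generators, and $\bar c\mapsto\bar e'$ extends to a permutation of the countably many generators. Hence it extends to an automorphism carrying $\bar a$ to that realization.

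Your reason for not stopping there is mistaken. Isolation is relative to $T=Th(\mathcal{F}_\omega)$, which is complete because it is the theory of a structure. So for each $\psi\in\mathrm{tp}(\bar a)$, the sentence $\forall\bar x\,(\theta(\bar x)\to\psi(\bar x))$ belongs to $T$ as soon as it holds in $\mathcal{F}_\omega$. It does hold there, because automorphisms preserve $\psi$. No other model $\mathcal{M}$ needs to be examined.

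The Ehrenfeucht--Fra\"iss\'e detour therefore buys nothing. Its conclusion is that an $n$-tuple of distinct indecomposables has the same type in every model of $T$. That already follows from the same orbit argument applied to $B_n$, together with completeness of $T$.

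As written, the detour also has problems:
\begin{itemize}
\item A model $\mathcal{M}\models T$, for instance a term algebra of uncountable rank, can have more indecomposables than $\mathcal{F}_\omega$. Then no bijection of indecomposables extending $c_j\mapsto d_j$ exists.
\item To repair this you would have to pass to a countable elementary substructure of $\mathcal{M}$ containing $\bar b,\bar d$, or let Player II extend a finite partial correspondence as the game proceeds.
\item Even then, you import ``verbatim'' the strategy of Theorem~\ref{IndecomposableEquivalentElementary}, whose verification in the paper is only sketched.
\end{itemize}
Promote your sanity check to the proof and drop the EF step.
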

\begin{proof}
Let $\mathcal{F}_\omega=\langle e_1, e_2, \ldots, e_m, \ldots \rangle$. The formula $B_m(x_1, \ldots, x_m)$ that says $x_i$'s are pairwise distinct indecomposable elements is $\emptyset$-definable. Note that the solution set of $B_m(\bar x)$ in $\mathcal{F}_\omega$ is the orbit of $e_1, \ldots, e_m$ under $Aut(\mathcal{F}_\omega)$. For any finite tuple $\bar a$ in $\mathcal{F}_\omega$, we consider the tuple of terms $\bar t(\bar x)$ such that, for $m$ large enough, $\bar t(e_1, e_2, \ldots, e_m)=\bar a$. We next define the formula $\phi(\bar x):=\exists \bar y \big( B_m(\bar y) \land \bar x=\bar t(\bar y)\big)$. This formula clearly isolates $tp^{\mathcal{F}_\omega}(\bar a)$. 
\end{proof}

The above proof works for any finitely generated term algebra as well. 

\begin{proposition}
Let $1\leq \kappa \leq \omega$. The term algebra of rank $\kappa$ is atomic.
\end{proposition}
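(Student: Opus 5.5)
The plan is to repeat the argument given for $\mathcal{F}_\omega$, now in $\mathcal{F}_\kappa=\langle e_1,\ldots,e_\kappa\rangle$ for arbitrary $1\leq\kappa\leq\omega$. For a finite tuple $\bar a$ from $\mathcal{F}_\kappa$, we will produce a single formula with free variables $\bar x$ that holds of $\bar a$ and implies every formula in $tp^{\mathcal{F}_\kappa}(\bar a)$.

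First, choose the generating tuple. Every element of $\mathcal{F}_\kappa$ is a ground term in the generators, so there is a finite $m$ (with $m\leq\kappa$ when $\kappa$ is finite) and a tuple of terms $\bar t(y_1,\ldots,y_m)$ with $\bar t(e_1,\ldots,e_m)=\bar a$. When $\kappa<\omega$ we simply take $m=\kappa$.

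Next, define
\[
B_m(\bar y)\;:=\;\bigwedge_{i<j}y_i\neq y_j\wedge\bigwedge_{i}N(y_i),
\qquad
\phi(\bar x)\;:=\;\exists\bar y\,\bigl(B_m(\bar y)\wedge\bar x=\bar t(\bar y)\bigr).
\]
Here $N$ is the indecomposability formula from \eqref{eq.1 - def - N(x)}. Clearly $\mathcal{F}_\kappa\models\phi(\bar a)$.

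The key step is to show that the solution set of $B_m$ is exactly the $Aut(\mathcal{F}_\kappa)$-orbit of $(e_1,\ldots,e_m)$. By unique readability of ground terms (equivalently, Axioms $A_i$ and $B_{ij}$), the indecomposable elements of $\mathcal{F}_\kappa$ are precisely the generators. Indeed, a generator is not of the form $f_i(\ldots)$, and every other element is. So any $\bar e'$ satisfying $B_m$ is a tuple of $m$ distinct generators. Since $\mathcal{F}_\kappa$ is absolutely free on $\{e_j\}$, any permutation $\sigma$ of the generating set with $\sigma(e_i)=e'_i$ for $i\leq m$ extends to an automorphism. Such a $\sigma$ exists: if $\kappa$ is finite, extend the injection to a bijection of a finite set; if $\kappa=\omega$, map the cofinite remainders onto each other bijectively.

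Finally, suppose $\bar a'\models\phi$, witnessed by $\bar e'$. The automorphism $\sigma$ above satisfies $\sigma(\bar a)=\bar t(\sigma(\bar e))=\bar t(\bar e')=\bar a'$. Hence $\bar a'$ and $\bar a$ have the same type, so $\phi$ isolates $tp(\bar a)$, and $\mathcal{F}_\kappa$ is atomic.

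The only point requiring care is the case $\kappa$ finite: we must check that $m\leq\kappa$ and that the solution set of $B_m$ consists of generators only. Both follow from the characterization of indecomposables, so I do not expect a genuine obstacle.
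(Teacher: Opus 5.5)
Your proposal is correct and is essentially the paper's proof. The paper proves the case $\kappa=\omega$ using the same formula $\phi(\bar x)=\exists\bar y\,\bigl(B_m(\bar y)\wedge\bar x=\bar t(\bar y)\bigr)$ and the fact that $B_m$ defines the $Aut(\mathcal{F}_\kappa)$-orbit of $(e_1,\ldots,e_m)$, and then simply remarks that the argument carries over to finitely generated term algebras. You additionally fill in details the paper leaves implicit: that the indecomposables are exactly the generators, and that an injection of generators extends to an automorphism in both the finite and the countable case.
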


As a matter of fact, for $1\leq m<\omega$, every realized type in $\mathcal{F}_m$ is algebraic with exactly as many solutions as permutations of $\{e_1, \ldots, e_m\}$.

The next result follows from abstract model theory.

\begin{corollary}
Any term algebra of rank $1\leq \kappa\leq \omega$ is homogeneous and prime model of its theory.
\end{corollary}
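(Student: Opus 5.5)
The plan is to use two standard facts from abstract model theory together with the atomicity established in the preceding proposition. The first fact: a countable atomic model is homogeneous. The second: a countable atomic model of a complete countable theory is prime. Since $\Sigma$ is a finite functional signature (as a reduct, without the constants), each $\mathcal{F}_\kappa$ with $1\leq\kappa\leq\omega$ is countable. Its theory is complete, being the theory of a single structure, and is axiomatized by the lists in Theorem~\ref{IndecomposableEquivalentElementary} and its infinite-rank analogue.

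For homogeneity, I will run a back-and-forth argument. Let $\bar a,\bar b$ be tuples with $tp(\bar a)=tp(\bar b)$, and let $c$ be arbitrary. By atomicity, $tp(\bar a c)$ is isolated by some formula $\phi(\bar x,y)$. Then $\exists y\,\phi(\bar x,y)\in tp(\bar a)=tp(\bar b)$, so there is some $d$ with $\models\phi(\bar b,d)$, and consequently $tp(\bar a c)=tp(\bar b d)$. Enumerating the countable universe and alternating the forth and back steps yields an automorphism extending $\bar a\mapsto\bar b$.

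Alternatively, I can verify homogeneity directly from the proof of atomicity. If $\bar a=\bar t(e_1,\ldots,e_m)$ and $\bar b$ realizes the same type, then $\bar b$ satisfies $\exists\bar y\,(B_m(\bar y)\wedge \bar x=\bar t(\bar y))$. So $\bar b=\bar t(e'_1,\ldots,e'_m)$ for some pairwise distinct indecomposables $e'_i$. The injection $e_i\mapsto e'_i$ extends to a permutation of the free generators, since in $\mathcal{F}_\kappa$ the indecomposables are exactly the generators. By freeness this permutation induces an automorphism sending $\bar a$ to $\bar b$.

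For primeness, let $\mathcal{M}\models Th(\mathcal{F}_\kappa)$ and enumerate $\mathcal{F}_\kappa=\{a_0,a_1,\ldots\}$. I build partial elementary maps $a_0\ldots a_{n}\mapsto m_0\ldots m_{n}$ step by step. At each stage, $tp(a_0\ldots a_{n+1})$ is isolated by some $\psi$. The formula $\exists y\,\psi(\bar x,y)$ lies in $tp(a_0\ldots a_n)=tp(m_0\ldots m_n)$, so a witness $m_{n+1}$ exists in $\mathcal{M}$, and since $\psi$ isolates the type, $tp(m_0\ldots m_{n+1})=tp(a_0\ldots a_{n+1})$. The union of these maps is an elementary embedding $\mathcal{F}_\kappa\preceq\mathcal{M}$.

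The only real point to check is that the isolating formulas are indeed complete. The obstacle here is showing that $\phi(\bar x)=\exists\bar y\,(B_m(\bar y)\wedge\bar x=\bar t(\bar y))$ actually isolates the type. This follows because any two realizations are conjugate by an automorphism, by the permutation argument above. That settles atomicity, and the rest is routine.
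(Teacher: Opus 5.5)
Your proposal is correct and follows the paper's route. The paper obtains the corollary in one line from the preceding atomicity proposition, using the standard facts that a countable atomic model is homogeneous and prime; you prove those facts by the usual back-and-forth and correctly justify why $\exists\bar y\,(B_m(\bar y)\wedge\bar x=\bar t(\bar y))$ isolates the type. Your extra direct argument, extending $e_i\mapsto e'_i$ to a permutation of the free generators, is a sound concrete check of strong homogeneity.
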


\begin{proposition}\label{UncountableTypes}
Any completion with indecomposable elements of the theory of locally free algebras admits uncountably many types over the empty set. 
\end{proposition}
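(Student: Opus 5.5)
We will produce continuum many pairwise distinct complete $1$-types over $\emptyset$. Fix a completion $T$ with at least one indecomposable element; by Theorem~\ref{IndecomposableEquivalentElementary} and its infinite-rank analogue, $T$ is the theory of a locally free algebra $\mathcal{A}$ with $m\geq 1$ indecomposables (or infinitely many). Fix an indecomposable element $e\in\mathcal{A}$ and a function symbol $f$ of arity $n\geq 2$. The idea is to encode an arbitrary infinite binary sequence $\sigma\in 2^\omega$ into the infinite ``branch'' of an element of an elementary extension. Membership is expressed by existential formulas that describe the element's decomposition down to depth $\ell$.

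Concretely, define terms recursively: given a finite binary string $s$, let $t_\emptyset(x)=x$, $t_{s0}(x)=f(t_s(x),\,e,\,\ldots,e)$ evaluated ``from the top'', and $t_{s1}(x)=f(e,\,t_s(x),\,e,\ldots,e)$. More precisely, we want formulas $\psi_s(x):=\exists y\,\bigl(x=\tau_s(y,\bar z)\bigr)$, where $\tau_s$ is built top-down so that $\tau_{s i}$ extends $\tau_s$ by placing the next unknown in the first or the second argument of $f$ according to $i$. The remaining arguments are filled by some indecomposable; to stay in the empty language, quantify it as $\exists z\,N(z)$ and use that variable. So
\[
\psi_s(x):=\exists y\,\exists z\,\bigl(N(z)\wedge x=\tau_s(y,z)\bigr),
\]
where $\tau_{\emptyset}=y$, and $\tau_{s0}$ (resp.\ $\tau_{s1}$) is $\tau_s$ with $y$ replaced by $f(y,z,\ldots,z)$ (resp.\ $f(z,y,z,\ldots,z)$). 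These are formulas over $\emptyset$, and $\psi_{s'}\to\psi_s$ whenever $s\subseteq s'$.

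Next, for each $\sigma\in 2^\omega$ we show that $\Sigma_\sigma(x)=\{\psi_{\sigma\restriction \ell}(x):\ell<\omega\}$ is consistent with $T$. By compactness and monotonicity, it suffices that each single $\psi_s$ is realized in $\mathcal{A}$, which holds by taking $y:=e$, $z:=e$. Extending each $\Sigma_\sigma$ to a complete type $p_\sigma$ gives $2^{\aleph_0}$ types, provided they are pairwise distinct. If $\sigma\neq\sigma'$ first differ at position $\ell$, with $s=\sigma\restriction\ell$, then $\psi_{s0}$ and $\psi_{s1}$ must be shown to be jointly inconsistent modulo $T$.

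This last inconsistency is the main step, and it is where the axioms are used. Suppose $x=\tau_{s0}(y,z)=\tau_{s1}(y',z')$ with $N(z),N(z')$. Peeling off the common top part $\tau_s$ via the injectivity Axioms $A_i$, we reach $f(y,z,\ldots,z)=f(z',y',z',\ldots,z')$. By $A_f$ this gives $y=z'$ and $z=y'$, so $y$ is indecomposable and $y'=z$. Comparing the other arguments, for $n\geq 3$ we get $z=z'$; for $n=2$ the equalities are just $y=z'$ and $z=y'$. To rule the latter out robustly, I would modify the construction so that the non-branch slot holds not $z$ but a fixed decomposable term such as $f(z,\ldots,z)$. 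Then the argument equalities force an indecomposable element to equal a value of $f$, contradicting $N$. Checking this bookkeeping carefully, including that $\tau_s$ has the same shape on both sides so that peeling is legitimate, is the only delicate point. It yields $2^{\aleph_0}$ distinct types over $\emptyset$.
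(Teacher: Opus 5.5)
You have the same overall plan as the paper: a binary tree of existential $\emptyset$-formulas, consistency of each branch by compactness, and pairwise incompatibility at the splitting node. However, the step you single out as the main one is false as you set it up, for every arity $n\geq 2$ and not only for $n=2$.

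Nothing constrains the branch variable $y$, so it may equal the filler. For an indecomposable $e$, the element $\tau_s(f(e,\ldots,e),e)$ satisfies both $\psi_{s0}$ and $\psi_{s1}$, each with $y=z=e$. Your equalities for $n\geq 3$, namely $y=z'$, $y'=z$ and $z=z'$, are all satisfied by $y=y'=z=z'=e$, so they contradict nothing.

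Your proposed repair fails for the same reason. With filler $c=f(z,\ldots,z)$, the split yields only $y=f(z',\ldots,z')$ and $y'=f(z,\ldots,z)$. Since $y,y'$ are unrestricted, this is satisfiable: take $z=z'=e$ and $y=y'=f(e,\ldots,e)$. No indecomposable element is ever forced to equal a value of $f$.

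What is missing is a condition that distinguishes the branch slot from the filler slot. The paper requires the branch argument to be decomposable ($y_1=f(y_3,y_4)$) and the other argument to be indecomposable ($N(y_2)$). The two children then disagree on whether the first argument of the splitting node is decomposable.

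In your setup it suffices to add the conjunct $\neg N(y)$ to $\psi_s$:
\begin{itemize}
\item monotonicity survives, because $f(y,z,\ldots,z)$ is decomposable;
\item each $\psi_s$ is realized by taking $z=e$ and $y=f(e,\ldots,e)$;
\item after peeling off $\tau_s$ with the injectivity axiom for $f$, the split gives $y=z'$, contradicting $\neg N(y)\wedge N(z')$.
\end{itemize}

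Alternatively, you can keep your original $\psi_s$ and compare one level deeper. $\Sigma_\sigma$ contains some $\psi_{s0b}$, which forces the first argument of the splitting node to be a value of $f$. By contrast, $\psi_{s1}$ forces that argument to equal the indecomposable $z'$. Hence $\Sigma_\sigma\cup\Sigma_{\sigma'}$ is still inconsistent, even though $\psi_{s0}\wedge\psi_{s1}$ is not.
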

\begin{proof}
For notational clarity we will assume that $f:=f_0$ is a $2$-ary function symbol. We will construct a rooted infinite complete binary tree where we will assign at each node a first-order formula. The collection of formulas that correspond to a branch of the tree, i.e. for  $\eta\in 2^{\omega}$, $\{\phi_{\eta\upharpoonright i}(x) \ | \ i<\omega\}$ is consistent. But any two branches will be inconsistent, i.e. if $\eta, \tau\in 2^{\omega}$ and $i$ is the smallest natural number such that $\eta(i)\neq \tau(i)$, then $\phi_{\eta\upharpoonright i}(x)\land \phi_{\tau\upharpoonright i}(x)$ is inconsistent. We assign to the root of the tree the formula $\phi(x):=\exists y_1,y_2 \big( x= f(y_1,y_2)\big)$. If at the next level we go "left" then we make $y_1$ decomposable and $y_2$ indecomposable and if we go "right" we reverse the roles of $y_1, y_2$. More formally 
$$\phi_0(x):=\exists y_1,y_2 \Big(\exists y_3,y_4 \big(y_1=f(y_3,y_4)\land N(y_2)\land x= f(y_1,y_2)\big)\Big) $$ and 
$$\phi_1(x):=\exists y_1,y_2 \Big(\exists y_3,y_4 \big(y_2=(f(y_3,y_4)\land N(y_1)\land x= f(y_1,y_2)\big)\Big)$$. 

It is clear that each formula $\phi_0(x), \phi_1(x)$ is consistent with $\phi(x)$, but, by applications of Axioms $B_{ij}$ and Axiom $A_0$ togehter with the definition of indecomposability $\phi_0(x)\land\phi_1(x)$ is inconsistent.  We continue splitting the decomposable existential variables, i.e. $y_1$ in $\phi_0$ and $y_2$ in $\phi_1$, in the same manner. Thus, 
$$\phi_{00}(x):=\exists y_1,y_2 \Bigg(\exists y_3,y_4 \Big( \exists y_5, y_6 \big(y_3=f(y_5,y_6)\land N(y_4)\land y_1=f(y_3,y_4)\land N(y_2)\land x= f(y_1,y_2)\big)\Big)\Bigg) $$

$$\phi_{01}(x):=\exists y_1,y_2 \Bigg(\exists y_3,y_4 \Big( \exists y_5, y_6 \big(y_4=f(y_5,y_6)\land N(y_3)\land y_1=f(y_3,y_4)\land N(y_2)\land x= f(y_1,y_2)\big)\Big)\Bigg) $$

$$\phi_{10}(x):=\exists y_1,y_2 \Bigg(\exists y_3,y_4 \Big( \exists y_5, y_6 \big(y_3=f(y_5,y_6)\land N(y_4)\land y_2=f(y_3,y_4)\land N(y_1)\land x= f(y_1,y_2)\big)\Big)\Bigg) $$
and 
$$\phi_{11}(x):=\exists y_1,y_2 \Bigg(\exists y_3,y_4 \Big( \exists y_5, y_6 \big(y_4=f(y_5,y_6)\land N(y_3)\land y_2=f(y_3,y_4)\land N(y_1)\land x= f(y_1,y_2)\big)\Big)\Bigg) $$

Now, $\{\phi(x), \phi_0(x), \phi_{0i}(x)\}$ and $\{\phi(x), \phi_1(x), \phi_{1i}(x)\}$ are consistent for $i\in\{0,1\}$, but $\phi_{00}(x)\land\phi_{01}(x)$ and $\phi_{10}(x)\land\phi_{11}(x)$ are both inconsistent. 

We continue following the same pattern. The formula $\phi_{\eta\upharpoonright i}(x)$, for $\eta\in 2^{\omega}$, is complicated to write down, hence we will omit the general form. It is not hard, though, to verify the properties of the tree. 
\end{proof}

\begin{remark}
We will deal with the first-order theory, $\mathcal{T}_0$, of locally free algebras without indecomposable elements in the next section. We will prove that it does not admit a prime model, therefore it also has uncountably many types over the empty set. 
\end{remark}

The next result is an immediate corollary of Proposition \ref{UncountableTypes} (and Corollary \ref{UncountablyT0}).

\begin{corollary}
The first-order theory of any term algebra does not admit a countable saturated model. 
\end{corollary}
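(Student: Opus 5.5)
The plan is to use the standard fact that a countable complete theory has a countable saturated model if and only if it is small, i.e.\ $S_n(\emptyset)$ is countable for every $n<\omega$. Our signature $\Sigma$ is finite and purely functional, so the theory of any term algebra is countable. It therefore suffices to show that, for each complete theory in question, $S_1(\emptyset)$ is uncountable. Once this is established, no countable model can realize all $1$-types over $\emptyset$: a countable model realizes only countably many types. Hence no countable model is even $\aleph_0$-saturated over $\emptyset$, let alone saturated.

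For the term algebras $\mathcal{F}_\kappa$ with $1\leq\kappa\leq\omega$, the theory of $\mathcal{F}_\kappa$ is the completion $\mathcal{T}_\kappa$. By Theorem~\ref{IndecomposableEquivalentElementary} and its infinite-rank analogue, this completion has at least one indecomposable element. Proposition~\ref{UncountableTypes} then applies directly and gives $2^{\aleph_0}$ complete $1$-types over $\emptyset$. Concretely, each branch $\eta\in 2^\omega$ of the binary tree of formulas is consistent, and distinct branches are pairwise inconsistent. So each branch extends to a complete type, and these types are pairwise distinct.

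The case of the theory $\mathcal{T}_0$, with no indecomposables, is the only place where more is needed. This is handled by Corollary~\ref{UncountablyT0} from the next section, which says that $\mathcal{T}_0$ has no prime model. For a countable complete theory, a small theory has an atomic model, since the isolated types are dense. A countable atomic model is prime, so smallness would yield a prime model. Contrapositively, $\mathcal{T}_0$ has some $S_n(\emptyset)$ that is uncountable, and again there is no countable saturated model.

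The main (mild) obstacle is just making sure the tree argument in Proposition~\ref{UncountableTypes} really requires indecomposables: it uses $N(y)$ to split branches, which is why $\mathcal{T}_0$ must be treated separately via the failure of a prime model. If one wanted to avoid the forward reference, one could instead try to adapt the tree, branching on which function symbol decomposes which argument. However, this needs at least two function symbols, so I would rely on Corollary~\ref{UncountablyT0} as stated.
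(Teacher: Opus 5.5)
Your proof is correct and matches the paper's argument: $S_n(\emptyset)$ is uncountable, by Proposition~\ref{UncountableTypes} for completions with indecomposables and by Corollary~\ref{UncountablyT0} for $\mathcal{T}_0$, so no countable model can realize all these types and none is saturated. One bookkeeping point: Corollary~\ref{UncountablyT0} itself already states that $\mathcal{T}_0$ has uncountably many types (the ``no prime model'' statement is the proposition just before it), so your smallness-implies-a-prime-model step simply re-derives that corollary rather than adding anything new.
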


We now pass to the notion of model completeness.

\begin{definition}
    Let $T$ be first-order theory in a language $\mathcal{L}$. Then,
    \begin{enumerate}
        \item $T$ is said to be \emph{model complete} if for every $\mathcal{A},\mathcal{B}\models T$ such that $\mathcal{A}\subseteq \mathcal{B}$, then $\mathcal{A}$ is an elementary substructure of $\mathcal{B}$; 
        \item an $\mathcal{L}$-theory $T'$ is a \emph{model companion} of $T$ if it is model complete and every model of $T$ embeds in a model of $T'$, and conversely;
        \item an $\mathcal{L}$-theory $T'$ is a \emph{model completion} of $T$ if it is a model companion such that for every $\mathcal{A}\models T$ the $\mathcal{L}_{\mathcal{A}}$-theory $T'\cup\mathrm{diag}(\mathcal{A})$ is complete.
    \end{enumerate}
\end{definition}

As a motivating example, we recall that the theory of algebraically closed fields is the model completion of the theory of fields. For more details we refer the reader to \cite[Section 3.5]{ChangKeisler1990ModelTheory}.

\begin{proposition}
Let $1\leq \kappa\leq \omega$. Then the first order theory of the term algebra of rank $\kappa$, $Th(\mathcal{F}_\kappa)$, is not model complete.
\end{proposition}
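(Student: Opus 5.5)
To show that $Th(\mathcal{F}_\kappa)$ is not model complete, I will exhibit two models $\mathcal{A}\subseteq\mathcal{B}$ of the theory such that the inclusion is not elementary. The witness will be indecomposability: the formula $N(x)$ is universal, so it is preserved downward but not upward. The aim is an embedding in which some element indecomposable in $\mathcal{A}$ becomes decomposable in $\mathcal{B}$.

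\emph{Finite rank $1\leq m<\omega$.} Let $\mathcal{B}=\mathcal{F}_m$ with free generators $e_1,\dots,e_m$. Choose a binary symbol $f$, which exists by the standing assumption on $\Sigma$. Set $e_1'=f(e_1,e_2)$ if $m\geq 2$, and $e_1'=f(e_1,e_1)$ if $m=1$; put $e_i'=e_i$ for $i\geq 2$. Let $\mathcal{A}$ be the subalgebra generated by $e_1',\dots,e_m'$.

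The key step is to check that these elements freely generate $\mathcal{A}$, so that $\mathcal{A}\cong\mathcal{F}_m$ and hence $\mathcal{A}\models Th(\mathcal{F}_m)$. I will use the fact that $\mathcal{F}_m$ is absolutely free: the substitution $\sigma$ sending $e_1\mapsto f(e_1,e_2)$ and $e_i\mapsto e_i$ extends to an endomorphism $\sigma$. I claim $\sigma$ is injective, and I will prove this by induction on term height. Distinct outermost function symbols are preserved by $\sigma$. Generators map either to distinct generators or to $f(e_1,e_2)$. The image $f(e_1,e_2)$ could only collide with $\sigma(f(s,t))=f(\sigma s,\sigma t)$, which would force $\sigma s=e_1$; but $e_1$ is not in the image of $\sigma$, since no generator maps to $e_1$ and every compound term maps to a compound term. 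The case $m=1$ is handled the same way.

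With $\mathcal{A}=\sigma(\mathcal{F}_m)\cong\mathcal{F}_m$, the inclusion $\mathcal{A}\subseteq\mathcal{B}$ fails to be elementary. Indeed, $\mathcal{A}\models N(e_1')$, because $e_1'$ is a free generator of $\mathcal{A}$, whereas $\mathcal{B}\models\neg N(e_1')$, because $e_1'=f(e_1,e_2)$ in $\mathcal{B}$.

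\emph{Rank $\omega$.} Here I take $\mathcal{B}=\mathcal{F}_\omega$ on generators $e_1,e_2,\dots$ and let $\mathcal{A}$ be generated by $f(e_1,e_2),e_3,e_4,\dots$. The same injectivity argument shows $\mathcal{A}\cong\mathcal{F}_\omega$, and the same formula $N(x)$ witnesses that the inclusion is not elementary.

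The main obstacle is the freeness verification, i.e.\ that the new generating set is free. I expect the height induction to handle it cleanly once one notes that each generator other than $e_1$ is fixed by $\sigma$, and that $e_1$ itself lies outside the image of $\sigma$.
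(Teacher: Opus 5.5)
Your proof is correct and takes essentially the paper's route. The paper likewise replaces one free generator $a_1$ by the decomposable element $f_0(a_1,\ldots,a_1)$ and observes that the resulting subalgebra is again isomorphic to $\mathcal{F}_\kappa$. It then refutes elementarity because that element is decomposable in the ambient algebra, via $\exists y\,(f_0(y,\ldots,y)=f_0(a_1,\ldots,a_1))$, while your injectivity argument for $\sigma$ supplies the detail the paper calls ``immediate''. The only cosmetic fix is that the standing assumption gives a symbol of some arity $n\geq 2$, not necessarily a binary one, so write $f(e_1,e_2,\ldots,e_2)$ or, as the paper does, $f(e_1,\ldots,e_1)$.
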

\begin{proof}
Let $\mathcal{F}_\kappa$ be a term algebra of rank $\kappa$. Let $\{a_i \ | \ i<\kappa\}$ be the family of indecomposable elements. Consider, the subalgebra, $\mathcal{A}$, generated by $\{a_i \ | \ 1< i < \kappa\} \cup \{f_0(a_1,\ldots, a_1)\}$. It is immediate that $\mathcal{A}$ is isomorphic to $\mathcal{F}_\kappa$. Moreover, $\mathcal{A}$ is not an elementary subalgebra since $\exists y (f_0(y,\ldots, y)=f_0(a_1,\ldots,a_1))$ is true in $\mathcal{F}_\kappa$, but not in $\mathcal{A}$. 
\end{proof}

\begin{proposition}
The first-order theory of locally free algebras without indecomposable elements is model complete. 
\end{proposition}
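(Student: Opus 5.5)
The plan is to combine Maltsev's quantifier elimination (Fact~\ref{MalcevQE}) with Robinson's criterion. A theory $T$ is model complete as soon as every formula is $T$-equivalent to an existential formula. The reason is that then both $\varphi$ and $\lnot\varphi$ are equivalent to existential formulas. Existential formulas are preserved upward along embeddings, so for models $\mathcal{A}\subseteq\mathcal{B}$ of $T$ and $\bar a\in A$ we get $\mathcal{A}\models\varphi(\bar a)\Rightarrow\mathcal{B}\models\varphi(\bar a)$, and likewise for $\lnot\varphi$. Hence $\mathcal{A}\preceq\mathcal{B}$. So, writing $\mathcal{T}_0$ for the axioms $(A_i)$, $(B_{ij})$, $(C_t)$ together with $D_0$, it suffices to show that modulo $\mathcal{T}_0$ every formula is equivalent to an existential one.

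By Fact~\ref{MalcevQE}, every formula is equivalent, in the class of locally free algebras and hence in every model of $\mathcal{T}_0$, to a positive Boolean combination of four kinds of formulas: standard formulas, the sentences $D^m$ and $E^m$, and the formulas $N_i(x)$. Positive Boolean combinations of existential formulas are again existential (after pulling quantifiers out, renaming variables). It therefore remains to check that each building block is $\mathcal{T}_0$-equivalent to an existential formula. The sentences are easy. In $\mathcal{T}_0$ there are no indecomposables, so $D^m$ is equivalent to $\top$ for every $m$. The sentence $E^m$ is equivalent to $\top$ for $m=0$ and to $\bot$ for $m\geq 1$. Both $\top$ and $\bot$ are trivially existential.

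The only real point is the formulas $N_i(x)$. They are universal, and they also occur inside standard formulas in the conjuncts $N_s(y_r)$. The key step is the equivalence
\[
\mathcal{T}_0\vdash\ N_i(x)\ \leftrightarrow\ \bigvee_{j\neq i}\exists \bar y\,\bigl(x=f_j(\bar y)\bigr).
\]
For the forward direction: by $D_0$ every element is $f_j$-decomposable for some $j$, so if $x$ is not $f_i$-decomposable it is $f_j$-decomposable for some $j\neq i$. For the backward direction: by Axiom $B_{ij}$, an element of the form $f_j(\bar y)$ with $j\neq i$ cannot equal any $f_i(\bar z)$. If $\Sigma$ has only one function symbol, the disjunction is empty and $N_0(x)$ is equivalent to $\bot$, which is consistent with the fact that every element is then $f_0$-decomposable.

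Substituting this equivalence for every occurrence of $N_i$, both the free occurrences and the ones inside standard formulas, turns a standard formula into an existential formula: the new existential quantifiers are pulled out past the conjunctions and the disjunctions are distributed. The only thing requiring care is bookkeeping of bound variables. The main conceptual obstacle is recognizing that the universal parts $N_i$ become existential precisely because of $D_0$. This is exactly what fails for $k\geq1$, consistent with the preceding proposition showing that $Th(\mathcal{F}_\kappa)$ is not model complete.
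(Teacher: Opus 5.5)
Your proposal is correct and follows the paper's proof. Both reduce, via Robinson's test and Maltsev's quantifier elimination, to showing that each $N_i(y)$ is $\mathcal{T}_0$-equivalent to the existential formula $\bigvee_{j\neq i}\lnot N_j(y)$, which holds by $D_0$ and the axioms $B_{ij}$; the only cosmetic difference is that the paper disposes of sentences by appealing to completeness of $\mathcal{T}_0$, whereas you evaluate $D^m$ and $E^m$ directly, which is equally valid.
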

\begin{proof}
It is enough to prove that modulo $\mathcal{T}_0$ any first-order formula in $\Sigma$ is equivalent to an existential one. Since $\mathcal{T}_0$ is complete, any sentence $\tau$ will be equivalent either to $\exists x(x=x)$, if it is implied by the theory, or equivalent to $\exists x(x\neq x)$, if its negation is implied. 

By Maltsev's quantifier elimination, Fact \ref{MalcevQE}, it is enough to prove that a standard formula is equivalent modulo $\mathcal{T}_0$ to an existential formula. Consequently, it is enough to show that formulas of the form $N_i(y)$ are equivalent modulo $\mathcal{T}_0$ to existential formulas. Indeed, each such formula is equivalent to $\bigvee_{j\neq i}\lnot N_j(y)$, which is existential.   
\end{proof}

\begin{corollary}
The theory of locally free algebras admits a model companion. 
\end{corollary}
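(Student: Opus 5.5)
The corollary follows from the preceding proposition once we show that $\mathcal{T}_0$ is a model companion of the theory of locally free algebras. Write $T_{lf}$ for the theory axiomatized by $(A_i)$, $(B_{ij})$ and $(C_t)$. By the preceding proposition, $\mathcal{T}_0 = T_{lf}\cup\{D_0\}$ is model complete. It remains to check mutual model-consistency.

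\emph{Easy direction.} Every model of $\mathcal{T}_0$ is a model of $T_{lf}$, so it embeds, via the identity, into a model of $T_{lf}$.

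\emph{Main step.} Every locally free algebra $\mathcal{A}$ must embed into a locally free algebra with no indecomposable elements. I would build a chain $\mathcal{A}=\mathcal{A}_0\subseteq\mathcal{A}_1\subseteq\cdots$ as follows.
\begin{itemize}
\item Given $\mathcal{A}_n$, let $\mathcal{A}_{n+1}$ be the algebra generated freely over $\mathcal{A}_n$ by new elements. For each indecomposable $a\in\mathcal{A}_n$, introduce fresh symbols $z^a_1,\ldots,z^a_{n_0}$ and declare $a=f_0(z^a_1,\ldots,z^a_{n_0})$.
\item Concretely, $\mathcal{A}_{n+1}$ is the term algebra over the set $X_n=\{\text{all } z^a_j\}\cup\mathcal{A}_n$. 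Every element of $\mathcal{A}_n$ is identified with its term, with one exception: an indecomposable $a$ is identified with $f_0(\bar z^a)$ rather than treated as a generator.
\item Equivalently, $\mathcal{A}_{n+1}$ is the free locally free extension of $\mathcal{A}_n$ with $f_0$ newly defined on the tuples $\bar z^a$.
\end{itemize}

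\emph{Verifications.} The substance of the proof is checking two properties of each $\mathcal{A}_{n+1}$.
\begin{itemize}
\item The operations are well defined and satisfy $A_i$, $B_{ij}$ and $C_t$. The cleanest route is to describe $\mathcal{A}_{n+1}$ as a set of normal-form terms: trees whose leaves are either the $z$'s or decomposable-free ``$\mathcal{A}_n$-blocks''. Injectivity and disjointness of the $f_i$ then hold because values inside $\mathcal{A}_n$ are computed in $\mathcal{A}_n$, where these axioms hold, and new values are formal. Acyclicity ($C_t$) follows by a height argument: within a finitely generated subalgebra, assign the new generators and the original indecomposables heights so that the whole algebra looks like a term algebra.
\item The only indecomposables of $\mathcal{A}_{n+1}$ are the new $z$'s. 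Every old indecomposable has become $f_0$-decomposable, and no new collapse occurs.
\end{itemize}
An alternative for local freeness: a finitely generated subalgebra of $\mathcal{A}_{n+1}$ sits inside the subalgebra generated by finitely many elements of $\mathcal{A}_n$ together with finitely many $z$'s. That subalgebra is the free algebra on a basis obtained from a free basis of the $\mathcal{A}_n$-part by replacing each indecomposable basis element $a$ with $\bar z^a$. Using Maltsev's characterization that every finitely generated subalgebra is free, this gives the axioms.

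\emph{Conclusion.} Let $\mathcal{B}=\bigcup_n\mathcal{A}_n$. The axioms $A_i$, $B_{ij}$ and $C_t$ are universal, hence preserved under unions of chains, so $\mathcal{B}$ is locally free. Any indecomposable $b\in\mathcal{B}$ lies in some $\mathcal{A}_n$. Indecomposability in $\mathcal{B}$ implies indecomposability in $\mathcal{A}_n$, so $b$ was made decomposable in $\mathcal{A}_{n+1}$, a contradiction. Thus $\mathcal{B}\models\mathcal{T}_0$ and $\mathcal{A}\subseteq\mathcal{B}$.

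So $\mathcal{T}_0$ is a model-complete theory mutually model-consistent with $T_{lf}$, and hence $T_{lf}$ admits a model companion, namely $\mathcal{T}_0$. I expect the main obstacle to be the verification that the one-step extension $\mathcal{A}_{n+1}$ is genuinely locally free, in particular that it satisfies the acyclicity axioms $C_t$ and that distinct old elements stay distinct.
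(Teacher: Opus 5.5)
Your argument is correct and has the same skeleton as the paper's. Model completeness of $\mathcal{T}_0$ comes from the preceding proposition. You build a chain $\mathcal{A}=\mathcal{A}_0\subseteq\mathcal{A}_1\subseteq\cdots$ in which each step makes the current indecomposables decomposable. The union is locally free because $A_i$, $B_{ij}$, $C_t$ are universal.

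The genuine difference is how the one-step extension is obtained. The paper gets it by compactness: one checks that $T_{lf}\cup\mathrm{diag}(\mathcal{A})\cup\{\exists\bar y\,(a=f_0(\bar y)) : a\in\mathrm{Ind}(\mathcal{A})\}$ is finitely satisfiable. A finite fragment only involves some $\langle c_1,\ldots,c_p\rangle\cong F(U)$, with the relevant indecomposables lying in the basis $U$. There the substitution $a\mapsto f_0(\bar z^a)$ is plainly an embedding of term algebras $F(U)\hookrightarrow F(U')$. That is exactly your ``alternative'' computation, and it is all the compactness route needs. So the paper never has to verify $A_i$, $B_{ij}$, $C_t$ in an infinite algebra.

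Your explicit free extension pays for those verifications, but in return it gives a canonical, concrete extension whose new indecomposables are exactly the $z$'s. If you keep it, tighten three points:
\begin{itemize}
\item State the normal forms precisely: terms over $\mathcal{A}_n\cup Z$ with arbitrary old leaves (not ``decomposable-free'' ones), containing no subterm $f_i(\bar b)$ with $\bar b\subseteq\mathcal{A}_n$ and no subterm $f_0(\bar z^a)$.
\item For $A_0$ and $B_{0j}$, the only cases not settled by the axioms in $\mathcal{A}_n$ or by formal distinctness are collisions of $a=f_0(\bar z^a)$ with some $f_j(\bar b)$, $\bar b$ old. Indecomposability of $a$ in $\mathcal{A}_n$ rules these out.
\item For $C_t$, drop the heights on finitely generated subalgebras. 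New elements carry a well-founded formal height, and the only step in the decomposition graph from an old element to a new one is $a\to z^a_j$, which is a dead end. Hence any cycle lies inside $\mathcal{A}_n$, where $C_t$ already holds.
\end{itemize}
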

\begin{proof}
The theory of locally free algebras and $\mathcal{T}_0$ are co-theories. Indeed, any locally free algebra embeds into a locally free algebra with no indecomposable elements as follows. If $\mathcal{A}$ is a locally free algebra with indecomposable elements, then we can use compactness to find a locally free algebra $\mathcal{B}_1$ which extends $\mathcal{A}$ and all indecomposable elements in $\mathcal{A}$ become decomposable in $\mathcal{B}_1$. Repeating the process we obtain a chain, $\mathcal{B}_1\subseteq \mathcal{B}_2\subseteq \ldots\subseteq \mathcal{B}_m\subseteq\ldots$, of locally free algebras each of which "decomposes" the indecomposable elements of the previous. It follows that its union is locally free and does not have indecomposable elements.   
\end{proof}

As a matter of fact the theory $\mathcal{T}_0$ is the model companion of each $Th(\mathcal{F}_\kappa)$, for $1\leq \kappa\leq \omega$. On the other hand, it is not the model completion. 

\begin{proposition}
    The first-order theory of locally free algebras does not admit a model completion.
\end{proposition}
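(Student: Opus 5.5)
Model companions are unique when they exist. The preceding proposition and corollary show that $\mathcal{T}_0$ is the model companion of the theory of locally free algebras. So if a model completion existed, it would have to be $\mathcal{T}_0$. I will therefore show that $\mathcal{T}_0$ is not a model completion. Concretely, I will exhibit a locally free algebra $\mathcal{A}$ for which $\mathcal{T}_0\cup\mathrm{diag}(\mathcal{A})$ is not complete. Equivalently, I will find two models $\mathcal{M}_1,\mathcal{M}_2\models\mathcal{T}_0$, both extending $\mathcal{A}$, and a sentence with parameters from $A$ that holds in $\mathcal{M}_1$ and fails in $\mathcal{M}_2$.

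The candidate is $\mathcal{A}=\mathcal{F}_1$, generated by a single indecomposable element $c$, and the distinguishing sentence is
\[
\exists y_1,\ldots,y_{n_0}\bigl(c=f_0(y_1,\ldots,y_{n_0})\bigr),
\]
or simply the sentence saying that $c$ is $f_0$-decomposable. In any model of $\mathcal{T}_0$ containing $c$, the element $c$ is decomposable via exactly one $f_i$, by Axiom $B_{ij}$. Since $\Sigma$ has at least one function symbol of arity $\ge 2$, I will take the language to have at least two function symbols. If $\Sigma$ has only one function symbol $f$, then every $c$ is $f$-decomposable in every model of $\mathcal{T}_0$, and I would instead separate the two extensions by whether $c=f(y,\ldots,y)$ for some $y$. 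That sentence fails in one extension and can hold in another, since adjoining a new $y$ with $c=f(y,\ldots,y)$ violates none of Axioms $A$, $B$, $C$.

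The two extensions are built as in the proof of the corollary on the existence of a model companion, but with one extra step. For $\mathcal{M}_1$, I first adjoin new elements $y_1,\ldots,y_{n_0}$ and set $c=f_0(\bar y)$. For $\mathcal{M}_2$, I use $f_1$ in place of $f_0$ (or, in the one-symbol case, distinct fresh $y_1\neq y_2$ in place of a repeated $y$). In both cases I then continue with the chain construction, decomposing all indecomposable elements, and take the union. This yields models of $\mathcal{T}_0$ extending $\mathcal{A}$. They disagree on the chosen sentence: in $\mathcal{M}_2$ the element $c$ is $f_1$-decomposable, so by $B_{01}$ it is not $f_0$-decomposable. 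Incompleteness of $\mathcal{T}_0\cup\mathrm{diag}(\mathcal{A})$ follows.

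The main obstacle is verifying that the first step, adjoining free preimages of $c$, keeps the algebra locally free. In other words, the extension must still satisfy Axioms $A_i$, $B_{ij}$ and $C_t$, and the decomposition of $c$ must not collapse any terms. I would handle this by taking the free algebra on the generators $\bar y$ and letting $\mathcal{A}$ sit inside it as the subalgebra generated by $f_0(\bar y)$. A subalgebra of a free algebra generated by a single element is isomorphic to $\mathcal{F}_1$, so this gives an explicit locally free extension in which the image of $c$ is decomposed as required, and the subsequent chain steps preserve the chosen decomposition of $c$.
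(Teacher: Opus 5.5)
Your argument is correct. It differs from the paper's mainly in how the obstruction is packaged. The paper appeals to \cite[Proposition~3.5.18]{ChangKeisler1990ModelTheory} to reduce the question to the amalgamation property. It then refutes amalgamation over $\mathcal{F}_1$ by embedding the generator $e$ as $t(x,x)$ into one locally free algebra and as $t(y,z)$ with $y\neq z$ into another. You instead show directly that the defining condition fails: you build two models of $\mathcal{T}_0\cup\mathrm{diag}(\mathcal{F}_1)$ that disagree on an existential sentence about the generator. The obstruction is the same in both arguments, namely two incompatible decompositions of the generator of $\mathcal{F}_1$. Your argument is in effect the relevant direction of the amalgamation criterion, unwound in this particular instance.

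The paper's route is shorter and needs only two small locally free algebras, because the passage to models of $\mathcal{T}_0$ is hidden inside the cited proposition. Yours is self-contained. It needs only uniqueness of model companions, which the paper uses tacitly and you make explicit, together with the chain construction from the preceding corollary. Taking a free algebra with $c\mapsto f_0(\bar y)$ as the first extension makes the local freeness of that step transparent, and the atomic equation is preserved along the chain.

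One simplification: the case split on the number of function symbols is unnecessary. The sentence deriving ``at least two function symbols'' from the existence of a symbol of arity $\geq 2$ is also a non sequitur, though harmless since you treat the other case. The distinction you use in the one-symbol case, $c=f(y,\dots,y)$ versus $c=f(y_1,y_2,\dots)$ with $y_1\neq y_2$, works uniformly by Axiom $A_i$ for any symbol $f$ of arity $\geq 2$. It is exactly the paper's $t(x,x)$ versus $t(y,z)$.
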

\begin{proof}
By \cite[Proposition\cspace3.5.18]{ChangKeisler1990ModelTheory}, $\mathcal{T}_0$ is the model completion of the theory $T$ of locally free algebras if and only if $\mathcal{T}$ has the amalgamation property. 

If $\mathcal{A},\mathcal{B}$ are models of $\mathcal{T}$, and $e$ is the indivisible generator of $\mathcal{F}_1$, there exist two embeddings $\alpha:\mathcal{F}_1\rightarrow\mathcal{A}$ and $\beta:\mathcal{F}_1\rightarrow\mathcal{B}$ such that 
    \begin{equation*}
        \mathcal{A}\models \exists x\left(\alpha(e)=t(x,x)\right)\quad\text{and}\quad\mathcal{B}\models\exists y,z\left(y\neq z \wedge\beta(e)=t(y,z)\right),
    \end{equation*}
    \noindent
    with $t(x,y)$ an arbitrary $\Sigma$-term in two variables. However, the axiom scheme $(A_i)$ implies that there exist no model $\mathcal{C}$ and embeddings $\alpha':\mathcal{A}\rightarrow \mathcal{C}$, $\beta':\mathcal{B}\rightarrow\mathcal{C}$ such that $\alpha'(\alpha(e))=\beta'(\beta(e))$ in $\mathcal{C}$. This concludes the proof.
\end{proof}

\begin{remark}
It is not hard to see that if $\Sigma$ contains a unique unary function symbol, then $\mathcal{T}_0$ is the model completion of the theory of locally free algebras. 
\end{remark}

\section{Stability}
In this section we will deal with certain notions stemming out mostly from Shelah's classifiaction program. The most prominent such notion is stability. For the rest of the introduction we fix a complete first-order theory $T$. 

\begin{definition}
Let $\phi(\bar x, \bar y)$ be a first-order formula. We say that $\phi(\bar x, \bar y)$ has the order property in $T$, if there is a model $\mathcal{M}\models T$ and sequences of tuples $(\bar a_i)_{i<\omega}$, $(\bar b_i)_{i<\omega}$ from $\mathcal{M}$ such that 
\begin{equation} \mathcal{M}\models \phi(\bar a_i, \bar b_j) \ \ \ \ \textrm{if and only if} \ \ \ \ i<j
\label{order}
\end{equation}

Moreover, $T$ is stable if no formula $\phi(\bar x, \bar y)$ has the order property in $T$.
\end{definition}

\begin{fact}\ 
\begin{itemize}
\item The first-order theory $T$ is stable if and only if no formula of the form $\phi(x, \bar y)$ (where $x$ is a singleton) has the order property. 
\item The first-order formula $\phi(\bar x, \bar y)$ does not have the order property in $T$ if and only if there is $m<\omega$ such that whenever two sequences of tuples from any model of $T$ satisfy (\ref{order}), then their length is bounded by $m$.
\end{itemize}
\end{fact}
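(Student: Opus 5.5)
The statement has two parts, and I would prove the second one first, since the first uses it.

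\emph{Second item.} The direction from right to left is immediate. An $\omega$-indexed witness to the order property restricts to witnesses of every finite length, so if the lengths are bounded by some $m$ there is no infinite witness. For the direction from left to right I would argue by compactness. Suppose that for every $n$ there are a model of $T$ and sequences $(\bar a_i)_{i<n}$, $(\bar b_j)_{j<n}$ in it satisfying (\ref{order}). Add new constants $\bar c_i,\bar d_j$ for $i,j<\omega$ and consider
\[
T\cup\{\phi(\bar c_i,\bar d_j)\ :\ i<j<\omega\}\cup\{\lnot\phi(\bar c_i,\bar d_j)\ :\ j\leq i<\omega\}.
\]
Any finite subset mentions only indices below some $n$. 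It is therefore satisfied by a length-$n$ witness, after renaming the constants. Hence the whole set is consistent, and a model of it witnesses the order property for $\phi$ in $T$. Since $T$ is complete, it does not matter which model of $T$ each finite pattern lives in.

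\emph{First item.} One direction is trivial. For the other, I would not manipulate order-property witnesses for tuples directly. Instead I would go through the standard counting characterisation: a complete theory $T$ is stable if and only if there is an infinite $\lambda$ with $|S_n(B)|\le\lambda$ for all $n$ and all $|B|\le\lambda$. The key intermediate lemma is the following. If $\phi(\bar x,\bar y)$ does not have the order property, then every complete $\phi$-type over a model is definable. Consequently, for $\lambda=\lambda^{\aleph_0}$ and $|B|\le\lambda$, there are at most $\lambda$ complete $\phi$-types over $B$. Conversely, if $\phi$ has the order property, then using compactness and the second item one can stretch the witnessing sequences to a dense order. Counting Dedekind cuts then gives more than $\lambda$ $\phi$-types over a set of size $\lambda$ for suitable $\lambda$.

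Granting this lemma, suppose no formula $\phi(x,\bar y)$ with $x$ a singleton has the order property. Then $|S_1(B)|\le|B|^{\aleph_0}$ for every $B$. Now argue by induction on $n$: a type of $(a_1,\ldots,a_{n+1})$ over $B$ is determined by $\operatorname{tp}(a_1,\ldots,a_n/B)$ together with $\operatorname{tp}(a_{n+1}/B a_1\ldots a_n)$. This gives $|S_{n+1}(B)|\le\lambda\cdot\lambda=\lambda$ whenever $|B|=\lambda=\lambda^{\aleph_0}$. Hence no formula in any tuple of variables can have the order property, since otherwise the cut-counting would contradict this bound.

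The main obstacle is the lemma linking the order property to the number of $\phi$-types, specifically the definability of $\phi$-types from the failure of the order property. I would prove it via Shelah's rank $R(-,\phi,2)$: failure of the order property gives, through the bound $m$ from the second item and a Ramsey-type argument, a finite bound on the binary tree rank. Finiteness of this rank yields definability of $\phi$-types by a Boolean combination of instances $\phi(\bar a,\bar y)$. Everything else in the argument is routine compactness and cardinal arithmetic.
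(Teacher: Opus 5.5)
The paper does not prove this Fact. It records it as standard background from stability theory, so there is no argument of the paper's to compare yours with. Your proposal follows the classical textbook route, and it is correct in outline. For the second item you use compactness. For the first item you pass through $\lambda$-stability:
\begin{itemize}
\item no order property for $\phi(x;\bar y)$ gives finite $R(-,\phi,2)$;
\item finite rank gives definable $\phi$-types;
\item definable $\phi$-types give few $1$-types, hence few $n$-types;
\item counting cuts then rules out the order property for any $\phi(\bar x;\bar y)$.
\end{itemize}

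Some points to tighten.
\begin{itemize}
\item \textbf{Use $|T|$, not $\aleph_0$.} The Fact concerns an arbitrary complete $T$. A $1$-type over $B$ is determined by its restrictions to the $|T|$ many partitioned formulas $\phi(x;\bar y)$. So what you actually get is $|S_1(B)|\le(|B|+|T|)^{|T|}$, and you need $\lambda=\lambda^{|T|}$. The condition $\lambda=\lambda^{\aleph_0}$ suffices only for countable languages, which is the paper's setting.
\item \textbf{No cardinal arithmetic is needed for a single $\phi$.} Take a model $M\supseteq B$ with $|M|=|B|+|T|$. Each $\phi$-type over $M$ is determined by a defining formula with parameters in $M$, so $|S_\phi(B)|\le|B|+|T|$. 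Finite rank yields such a definition directly. The Boolean-combination form of the definition that you mention is a further theorem, and you never use it.
\item \textbf{Say which $\lambda$ the cut-counting uses.} The final contradiction needs a linear order of size at most $\lambda$ with more than $\lambda$ cuts, for the same $\lambda$ at which you proved the type bound. This holds for every infinite $\lambda$: take $\mu$ least with $2^\mu>\lambda$ and the lexicographic order on $2^{<\mu}$. State it that way rather than ``for suitable $\lambda$''.
\item \textbf{Fix realisations when bounding $|S_{n+1}(B)|$.} Fix one realisation of each $n$-type over $B$, and move arbitrary realisations onto it by automorphisms over $B$. Otherwise the factor $\mathrm{tp}(a_{n+1}/Ba_1\ldots a_n)$ does not range over types over a fixed set, and the product bound $\lambda\cdot\lambda$ is not justified as written.
\end{itemize}
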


Conspicuous examples of natural stable theories are the theory of any $R$-module over any ring $R$, the theory of algebraically closed fields of any fixed characteristic and the theory of nonabelian free groups (\cite{Sela2013DiophantineVIIIStability}). 

There are various strengthenings of stability that have been studied along different lines of research. We recall that stability implies the existence of an independence relation, called forking independence, and at the level of formulas is defined as follows. We say that $\phi(\bar x, \bar b)$ forks over $A$, if in some elementary extension $\mathcal{N}$, that contains $A$ and $\bar b$, there exist infinitely many automorphisms $(f_i)_{i<\omega}\in Aut(\mathcal{N}/A)$ that fix $A$ and such that the set of images of $\phi(\mathcal{N},\bar b)$ under $(f_i)_{i<\omega}$ is $k$-inconsistent for some $k<\omega$, i.e. the intersection of any $k$ distinct conjugates of the set is empty. 

A type $p\in S(B)$ forks over $A\subseteq B$, if it contains a formula that forks over $A$. Any type $p\in S(A)$ has at least one non-forking extension over any set $B$ that contains $A$, i.e. there is $q\in S(B)$, such that $q\upharpoonright A=p$ and $q$ does not fork over $A$. A type $p\in S(B)$ is called stationary if it has, over any set of parameters containing $B$, a unique non-forking extension. A Morley sequence (of length $\omega$) of a stationary type $p\in S(B)$ is a sequence of realizations of non-forking extensions over the previous realizations, i.e. $a_{i+1}\models p_i$, where $p_i$ is the forking extension of $p$ over $B, a_1, \ldots, a_i$. A Morley sequence is essentially unique in the sense that any two such sequences have the same type over $B$. In addition, a stationary type admits a canonical base which is the smallest definably closed sets (in the sense of eq) over which the type is definable. The canonical base is always contained in the algabraic closure of a Morley sequence. 

\begin{definition}
A stable theory is $1$-based if the canonical base of any stationary type is contained in the algebraic closure (in the sense of eq) of any of its realizations. 
\end{definition}

\begin{definition}[Superstability]
A stable theory $T$ is superstable if no formula admits an infinite strictly descending forking chain $$\phi_0>\phi_1>\ldots>\phi_n>\ldots$$
where $\phi>\psi$, if $T\models \psi\rightarrow\phi$, $\phi$ is defined over $A$ and $\psi$ forks over $A$.
\end{definition}

\begin{definition}[Normality]
A first-order formula $\phi(\bar x, \bar y)$ is normal (with respect to $\bar x$) in $T$ if in any model $\mathcal{M}\models T$ and $\bar b$ a tuple from $\mathcal{M}$, any conjugate of $\phi(\mathcal{M}, \bar b)$ by an automorphism of $\mathcal{M}$ is either equal to $\phi(\mathcal{M}, \bar b)$ or disjoint from it. 

Moreover $T$ is normal if every formula is equivalent (modulo $T$) to a Boolean combination of normal formulas.
\end{definition}

The theory of any $R$-module over any ring $R$ is normal. This follows easily from the quantifier elimination down to positive primitive formulas and the understanding of their solutions sets - $\phi(\bar x, \bar b)$ is a coset of the submodule defined by $\phi(\bar x, \bar 0)$ in any model $\mathcal{M}$ of the theory. On the other hand, no theory of an algebraically closed field of a fixed characteristic is normal. 

\begin{definition}[Finite Cover Property]
A first-order formula $\phi(\bar x, \bar y)$ does not have the finite cover property in $T$ if there is a natural number $n<\omega$ such that whenever for any set of instances any subset of \ $\leq n$ instances is consistent then the whole set of instances is consistent.  

Moreover $T$ does not have the finite cover property if every formula $\phi(\bar x, \bar y)$ does not have the finite cover property.
\end{definition}

\begin{fact}
The first-order theory $T$ does not have the finite cover property if and only if every formula $\phi(x,\bar y)$ (where $x$ is a singleton) does not have the finite cover property.
\end{fact}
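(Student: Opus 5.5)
One direction is immediate: if every formula $\phi(\bar x,\bar y)$ lacks the finite cover property, then in particular every $\phi(x,\bar y)$ with $x$ a singleton does. For the converse I would argue by induction on the length of $\bar x$. Throughout I work inside a monster model $\mathbb{M}$ of $T$. A set of instances $\{\phi(\bar x,\bar b):\bar b\in B\}$ with $B\subseteq\mathbb{M}$ is \emph{consistent} if it is realized in some elementary extension, equivalently (by compactness and saturation, for small $B$) in $\mathbb{M}$. The base case $|\bar x|=1$ is the hypothesis. For the inductive step write $\bar x=(x,\bar z)$ with $x$ a singleton and $|\bar z|=|\bar x|-1$, and fix $\phi(x,\bar z;\bar y)$.

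\textbf{Step 1 (the singleton variable).} Regard $\phi$ as a formula $\phi(x;\bar z\bar y)$ in the single object variable $x$, with parameter variables $\bar z\bar y$. By hypothesis it does not have the finite cover property, witnessed by some $n_1<\omega$. Hence for any fixed $\bar c$ and any set $B$, the family $\{\phi(x,\bar c,\bar b):\bar b\in B\}$ is consistent iff every subfamily of size $\le n_1$ is. That is, it is consistent iff $\mathbb{M}\models\exists x\bigwedge_{\bar b\in C}\phi(x,\bar c,\bar b)$ for every $C\subseteq B$ with $|C|\le n_1$. (Subfamilies of size $<n_1$ are handled by repeating parameters.)

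\textbf{Step 2 (the remaining variables).} Define the auxiliary formula
\[
\chi(\bar z;\bar y_1,\ldots,\bar y_{n_1}) := \exists x \bigwedge_{i=1}^{n_1}\phi(x,\bar z,\bar y_i),
\]
in which the object variables $\bar z$ are strictly shorter than $\bar x$. By the induction hypothesis $\chi$ does not have the finite cover property, say with bound $n_2$.

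\textbf{Step 3 (combining the bounds).} I claim $n:=n_1n_2$ witnesses that $\phi(\bar x;\bar y)$ does not have the finite cover property. Let $\{\phi(x,\bar z,\bar b):\bar b\in B\}$ be a family all of whose subfamilies of size $\le n$ are consistent. A tuple $(a,\bar c)$ realizes the whole family iff $\bar c$ satisfies the partial type $\{\phi(x,\bar c,\bar b):\bar b\in B\}$ in $x$. By Step 1 this holds iff $\bar c$ realizes
\[
\Theta(\bar z):=\{\chi(\bar z;\bar b_1,\ldots,\bar b_{n_1}): \bar b_i\in B\}.
\]
So consistency of the original family is equivalent to consistency of $\Theta$. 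Now any $\le n_2$ instances of $\chi$ in $\Theta$ together mention at most $n_1n_2$ parameters from $B$. Their conjunction is implied by (indeed equivalent to the existence of $\bar z$ satisfying) the corresponding subfamily of $\le n$ instances of $\phi$, which is consistent by assumption. Hence every $\le n_2$-subset of $\Theta$ is consistent, so $\Theta$ is consistent by the choice of $n_2$, and therefore the original family is consistent.

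\textbf{Main obstacle.} The point requiring care is the bookkeeping in Steps 1 and 3: the bound $n_1$ must be applied uniformly in the parameter $\bar c$, and the consistency of $\Theta$ must be correctly identified with the existence of a single $\bar c$ making the whole $x$-family consistent. Both rely on working in a saturated model, so that ``consistent'' means ``realized in $\mathbb{M}$''. The key is that treating $\bar z$ as part of the parameters in Step 1 is legitimate, because the finite cover property is a statement about arbitrary parameter tuples. Beyond that the argument is purely combinatorial and uses no stability.
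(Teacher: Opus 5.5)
Your proof is correct. The paper records this statement as a standard Fact and gives no proof of its own, so there is no argument to compare yours with. Your induction on $|\bar x|$ is a complete and purely combinatorial argument. It runs as follows: you first apply the one-variable hypothesis to $\phi(x;\bar z\bar y)$ to obtain $n_1$. You then apply the induction hypothesis to $\chi(\bar z;\bar y_1,\ldots,\bar y_{n_1})=\exists x\bigwedge_{i\le n_1}\phi(x,\bar z,\bar y_i)$ to obtain $n_2$. Finally you take $n=n_1n_2$.

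\textbf{The parenthetical ``indeed equivalent'' in Step~3.} Read literally, it is false. A conjunction of $\chi$-instances may be witnessed by different values of $x$. For example, with $\phi(x,\bar z;y):=(x=y)$ and distinct parameters $b\neq b'$, the conjunction $\chi(\bar z;b,b)\wedge\chi(\bar z;b',b')$ holds for every $\bar z$. Yet $\{x=b,\,x=b'\}$ is inconsistent. This does not affect the proof. You only use the implication from a realization of the $\le n_1n_2$ instances of $\phi$ to a realization of the corresponding $\le n_2$ instances of $\chi$, and that implication is correct.

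\textbf{The saturation discussion.} You can drop it entirely. By compactness, it suffices to verify the bound for finite families $B$, since every finite subfamily of an $n$-consistent family is again $n$-consistent. For finite $B$, consistency is expressed by a single first-order sentence with parameters, so the choice of ambient model is irrelevant.
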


The theory of an equivalence relation with arbitrarily large finite classes is an example of a normal theory (hence stable) that has the finite cover property.  

In the following subsection we will explore the complete theories of locally free algebras with respect to the above notions. 

\subsection{Stability of locally free algebras}

We start by proving the non-superstability of any completion of the theory of locally free algebras. Recall that we have a blanket assumption of the existence of at least one function symbol of arity $\geq 2$. Our proof generalizes to any theory in which a "1-1" $n$-ary function, for $n\geq 2$, is definable.

\begin{definition}
Let $\mathcal{L}$ be a first-order signature (not necessarily functional). Let $\mathcal{M}$ be an $\mathcal{L}$-structure and $f:\mathcal{M}^n\rightarrow \mathcal{M}$ a definable function over $\mathcal{M}$. Let $p(x)\in S_1(\mathcal{M})$, a (complete) type over $\mathcal{M}$. We say that $f$ is "1-1" over $p(x)$, if there is a sequence of realizations $(a_i)_{i<\omega}$ of $p(x)$ such that $\models \forall \bar x\bar y\big(f(a_i,\bar x)\neq f(a_j,\bar y)\big)$, for every $i\neq j$.   
\end{definition}

\begin{theorem}\label{thm - general criterion non-superstability}
Let $\mathcal{L}$ be a first-order signature (not necessarily functional). Let $\mathcal{M}$ be a stable $\mathcal{L}$-structure and suppose there exists a definable function $f:\mathcal{M}^n\rightarrow \mathcal{M}$, for some $n\geq 2$. Suppose $f$ is "1-1" over a Morley sequence of a non-algebraic type over $\mathcal{M}$. Then, the first-order theory of $\mathcal{M}$ is not superstable. 
\end{theorem}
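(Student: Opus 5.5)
We will produce a forking chain of infinite length, working inside a monster model $\mathbb{M}$ of $Th(\mathcal{M})$. Fix a non-algebraic type $p(x)\in S_1(\mathcal{M})$ and a Morley sequence $(a_i)_{i<\omega}$ of $p$ over which $f$ is ``1-1''. Since $n\geq 2$, the function $f$ lets us code tuples by single elements. Put $b_0:=a_0$ and $b_{m+1}:=f(a_{m+1},b_m,\ldots,b_m)$; we could instead keep the remaining arguments free. The aim is a descending sequence of formulas $\psi_0(x)\leftarrow\psi_1(x)\leftarrow\cdots$ with parameters $A_m=\{a_0,\dots,a_m\}$ (or $\mathcal{M}\cup A_m$), each forking over the parameters of its predecessor. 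This is exactly what superstability forbids.

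\textbf{Key steps.} First, consider $\theta_m(x):=\exists \bar y\,\big(x=f(a_m,\bar y)\big)$, which says that ``$x$ lies in the $f$-fibre of $a_m$''. Since each $a_m$ is a realization of $p$ in the same Morley sequence, these formulas are conjugate to one another. We use indiscernibility (and stationarity after passing to $\mathcal{M}$, since types over models are stationary) to get automorphisms fixing $\mathcal{M}\cup\{a_0,\dots,a_{m-1}\}$ that move $a_m$ to $a_j$ for any $j\geq m$. The ``1-1'' hypothesis then says that the conjugates $\theta_j(\mathbb{M})$, $j\geq m$, are pairwise disjoint, i.e.\ $2$-inconsistent. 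So $\theta_m$ forks over $\mathcal{M}\cup\{a_0,\dots,a_{m-1}\}$.

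Next, we nest these formulas to make the chain descend. Define
\[
\phi_m(x):=\exists y_1\cdots\exists y_m\,\big(x=f(a_0,y_1,\ldots)\wedge y_1=f(a_1,y_2,\ldots)\wedge\cdots\wedge y_{m-1}=f(a_{m-1},y_m,\ldots)\big),
\]
so that $\phi_{m+1}\to\phi_m$, and $\phi_{m+1}$ differs from $\phi_m$ only by the new parameter $a_m$ in the innermost fibre condition. Since $n\geq 2$, there is an argument slot for the nested variable. To see that $\phi_{m+1}$ forks over $\mathcal{M}a_0\ldots a_{m-1}$, apply automorphisms fixing $\mathcal{M}a_0\dots a_{m-1}$ that send $a_m\mapsto a_j$. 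If $x$ satisfies two such conjugates, we peel off the common outer layers. For this we need $f(a_i,\cdot)$ to be injective in the remaining coordinates, or at least the peeling has to be valid, so that the $y_m$'s coincide; disjointness of the $a_m$- and $a_j$-fibres then gives a contradiction. Finally, the chain $\phi_0>\phi_1>\cdots$ witnesses non-superstability by the definition above.

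\textbf{Main obstacle.} The peeling step is the delicate point, since general $f$ need not be injective in its last $n-1$ arguments. We would first try to avoid peeling altogether. The idea is to make the forking witness depend only on the innermost layer, by taking $\phi_{m+1}:=\phi_m\wedge\chi_m$, where $\chi_m$ expresses ``$y_m$ lies in the $a_m$-fibre'' for the specific $y_m$. If injectivity is unavailable, we would instead argue $k$-inconsistency via the ``1-1'' property applied to $f$-images, choosing the $y$'s in the tail coordinate. A second point to verify is that each $\phi_m$ is consistent (realized by $b_m$-type elements) and is non-algebraic where needed. We also need the automorphisms to exist, which requires the Morley sequence to be indiscernible over $\mathcal{M}$, and that holds by stability.
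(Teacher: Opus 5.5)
\paragraph{The gap you flag cannot be closed.}
Your ``Main obstacle'' is the real problem, and without some injectivity of $f$ the statement is false. Take $\mathcal{M}$ an infinite set with no structure, $n=2$, and $f(x_1,x_2):=x_1$, which is $\emptyset$-definable. A Morley sequence of the unique non-algebraic type over $\mathcal{M}$ is any sequence of distinct elements outside $M$. Since $f(a_i,y)=a_i\neq a_j=f(a_j,y')$, the function $f$ is ``1-1'' over it. Yet $Th(\mathcal{M})$ is $\omega$-stable, hence superstable.

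In your chain, $\phi_m(x)$ collapses to $x=a_0$ for every $m\geq 1$, so nothing forks after the first step. This also rules out your fallbacks. The formula $\chi_m$ would have to speak about ``the specific $y_m$'', but $y_m$ is bound and is determined by $x$ only if the outer layers can be peeled, which is the same problem. And no $k$-inconsistency argument can succeed, in view of the example.

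\paragraph{The missing hypothesis, and how it compares with the paper.}
What is missing is a hypothesis such as: $f$ is injective, or at least $f(a_i,y,\bar z)=f(a_i,y',\bar z')$ implies $y=y'$ in the slot used for nesting. With it, your argument is complete. Because $(a_j)_{j\geq m}$ is indiscernible over $\mathcal{M}a_0\ldots a_{m-1}$, you may conjugate $\phi_{m+1}$ over $\mathcal{M}a_0\ldots a_{m-1}$ by sending $a_m$ to $a_j$ and to $a_{j'}$, for $j\neq j'\geq m$. If $x$ satisfies both conjugates, peeling $m$ times gives $y_m=y_m'$, and hence $f(a_j,\ldots)=f(a_{j'},\ldots)$, contradicting ``1-1''. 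So the conjugates are $2$-inconsistent, and $\phi_0>\phi_1>\cdots$ is an infinite forking chain. For locally free algebras this injectivity is exactly Axiom $A_i$, so the corollary there is unaffected.

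The paper's proof uses the same nested-fibre idea, but it simply asserts that $\phi_{n+1}$ forks, tacitly using injectivity. It also nests with the newest parameter outermost, $x=f(a_{n+1},f(a_n,\ldots))$. By the ``1-1'' property that formula is then inconsistent with $\phi_n$ rather than implying it. Your ordering, with the new parameter innermost, is the correct one.
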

\begin{proof}
Let $(a_i)_{i<\omega}$ be the Morley sequence given by the hypothesis. We consider the formula, $\phi_0(x):=\exists \bar y\big(x=f(\bar y)\big)$. We claim that the formula $\phi_0$ is not superstable. Indeed, $\phi_1(x):= \exists y_2, \ldots, y_n \big(x=f(a_1, y_2, \ldots, y_n)\big)$ implies $\phi_0(x)$, is definable over $\mathcal{M}\cup\{a_1\}$ and forks over $\mathcal{M}$. Likewise, $\phi_{n+1}(x):=\exists y_2, \ldots, y_n\big(x=f(a_{n+1},f(a_{n},\ldots(f(a_1,y_2, \ldots, y_n))\ldots)\big)$ implies $\phi_n(x)$, is definable over $\mathcal{M}\cup\{a_1^1, \ldots, a_{n+1}^1\}$ and forks over $\mathcal{M}\cup \{a_1, \ldots, a_n\}$. This finishes the proof. 
 \end{proof}

The following result is now an immediate corollary of Theorem \ref{thm - general criterion non-superstability}. 

\begin{corollary}
Any complete theory of locally free algebras is not superstable.
\end{corollary}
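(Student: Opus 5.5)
We apply Theorem~\ref{thm - general criterion non-superstability}. Fix a complete theory $T$ of locally free algebras, i.e.\ $\mathcal{T}_k$ for some $0\le k\le\omega$, and a model $\mathcal{M}\models T$; take $\mathcal{M}$ sufficiently saturated, with the Morley sequence living in a monster model. Stability of $T$ is the result recalled in the introduction, a consequence of Maltsev's quantifier elimination (Fact~\ref{MalcevQE}); it is also reproved later by Belegradek-style arguments, and we take it as given. By the blanket assumption on $\Sigma$ there is a function symbol, say $f=f_0$, of arity $n\geq 2$. This $f$ is a $\emptyset$-definable function $M^n\to M$. It remains to find a non-algebraic type $p\in S_1(\mathcal{M})$ with a Morley sequence $(a_i)_{i<\omega}$ over which $f$ is ``1-1''.

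The key observation is that injectivity is automatic. By Axiom $A_0$, if $f(a_i,\bar x)=f(a_j,\bar y)$ then $a_i=a_j$. So for \emph{any} sequence of pairwise distinct elements $a_i$ we get $\models\forall\bar x\bar y\,\bigl(f(a_i,\bar x)\neq f(a_j,\bar y)\bigr)$ for $i\neq j$. The whole task therefore reduces to producing a non-algebraic type over $\mathcal{M}$ whose Morley sequence consists of pairwise distinct elements.

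Non-algebraic types exist over any model of a theory with infinite models. Models of $T$ are infinite: they contain an element $a$ (either an indecomposable, or, when $k=0$, any element), and by Axiom $C_t$ the elements $a, f(a,\dots,a), f(f(a,\dots,a),\dots)$ are pairwise distinct. By compactness, $\{x\neq m: m\in M\}$ extends to a complete type $p\in S_1(\mathcal{M})$, which is non-algebraic. Types over models in a stable theory are stationary, so $p$ has a Morley sequence $(a_i)_{i<\omega}$. Each $a_{i+1}$ realizes the non-forking extension of $p$ to $\mathcal{M}a_1\dots a_i$. Non-forking extensions of a non-algebraic type are non-algebraic, so $a_{i+1}\notin\{a_1,\dots,a_i\}$, and the sequence consists of distinct elements.

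All hypotheses of Theorem~\ref{thm - general criterion non-superstability} are then met, and $T$ is not superstable. The only delicate point is bookkeeping: the Morley sequence lives in an elementary extension, but the ``1-1'' condition is a first-order statement about pairs $a_i,a_j$ that follows from Axiom~$A_0$ in any model. The main obstacle is therefore only citing stability correctly. If one prefers not to rely on the introduction's citation, one would first prove stability by counting types over countable sets via Maltsev's positive quantifier elimination; I expect that to be the heaviest step, but here it is assumed.
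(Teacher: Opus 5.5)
Your argument is the paper's argument: you apply Theorem~\ref{thm - general criterion non-superstability} to $f_0$ and a Morley sequence of a non-algebraic type over a model. The details you supply are correct: Axiom $A_0$ yields the ``1-1'' condition for any pairwise distinct $a_i$, a Morley sequence of a non-algebraic type is pairwise distinct, and stability is imported. The problem is the black box itself. As literally stated, the criterion is false, so your final step ``all hypotheses are met, hence $T$ is not superstable'' is not valid. Its ``1-1'' hypothesis only separates first coordinates. So in the theory of a pure infinite set, the definable function $f(x,y)=x$ satisfies it along any Morley sequence of the non-algebraic type over a model, although that theory is $\omega$-stable, hence superstable. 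Your remark that the hypothesis is ``automatic'' for distinct $a_i$ is a symptom of exactly this weakness. The chain in the paper's proof of the criterion is also nested the wrong way: $\phi_{m+1}(x)$ says $x=f(a_{m+1},f(a_m,\ldots))$. In a locally free algebra, Axiom $A_0$ therefore makes $\phi_{m+1}$ \emph{inconsistent} with $\phi_m$ rather than implying it.

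The missing ingredient is injectivity of $f_0$ in \emph{all} coordinates, which $A_0$ does give you, combined with placing the new parameter innermost. Put $g(u,v):=f_0(u,v,\ldots,v)$, which is injective by $A_0$, and $\psi_m(x):=\exists y\,\bigl(x=g(a_1,g(a_2,\ldots g(a_m,y)\ldots))\bigr)$, with $\psi_0$ the formula $x=x$. Then $\psi_{m+1}\to\psi_m$, using $g(a_{m+1},y)$ as the witness. Since $(a_{m+1},a_{m+2},\ldots)$ is indiscernible over $Ma_1\ldots a_m$, the conjugates of $\psi_{m+1}$ obtained by replacing $a_{m+1}$ with $a_{m+1+j}$ are pairwise inconsistent. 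Indeed, peeling off $g(a_1,\cdot),\ldots,g(a_m,\cdot)$ by injectivity reduces a common solution to $g(a_{m+1+j},y)=g(a_{m+1+j'},y')$, forcing $a_{m+1+j}=a_{m+1+j'}$. Hence $\psi_{m+1}$ divides, so forks, over $Ma_1\ldots a_m$. Thus $\psi_0>\psi_1>\psi_2>\cdots$ is an infinite descending forking chain in the paper's sense, and $T$ is not superstable. With this direct argument replacing the citation, the rest of your setup (stability, the non-algebraic type $p$, distinctness of its Morley sequence) can stay as it is.
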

\begin{proof}
By our blanket assumption there is a function symbol $f$ of arity $\geq 2$. It is enough to apply the previous theorem to $f$ and any Morley sequence of a non-algebraic type over a locally free algebra. 
\end{proof}

We next prove a strengthening of stability. 

\begin{theorem}\label{NormalProof}
Any complete theory of locally free algebras is normal.
\end{theorem}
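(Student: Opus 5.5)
Our plan is to use Maltsev's quantifier elimination (Fact \ref{MalcevQE}) to reduce to standard formulas, and then check normality on them directly. Since a Boolean combination of Boolean combinations of normal formulas is again such a combination, and the sentences $D^m,E^m$ are trivially normal (their solution sets are everything or nothing), it suffices to show that each formula $N_i(x)$ and each standard formula
\[
\phi(\bar x)=\exists \bar y\Bigl(\bigwedge_{i\in I} x_i=t_i\wedge \bigwedge_{j\in J}x_j\neq u_j \wedge\bigwedge_{h\in H}y_h\neq v_h\wedge \bigwedge_{r\in R}\bigwedge_{s\in S_r} N_s(y_r)\Bigr)
\]
is a Boolean combination of normal formulas, after a suitable choice of partition of the free variables into object variables $\bar x'$ and parameter variables $\bar z$. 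Formulas without parameter variables are normal automatically, since a $\emptyset$-definable set is fixed setwise by every automorphism. So only the formulas that actually carry parameters matter.

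The key step is to show that the positive core is normal. Take a formula $\psi(\bar x';\bar z):=\exists \bar y\,\bigwedge_i x'_i = t_i(\bar y,\bar z)$, possibly conjoined with conditions $N_s(y_r)$. By the axioms $A_i$ and $B_{ij}$, term equations in a locally free algebra can be solved by unification. The most general unifier expresses $\bar x'$ as terms in $\bar z$ and in finitely many free variables $\bar w$, or else the system is inconsistent. So $\psi(\mathcal M,\bar b)$ consists of the tuples $\bar s(\bar c,\bar b)$ as $\bar c$ ranges over the tuples satisfying the $N$-conditions. We then show directly that if $\psi(\mathcal M,\bar b)\cap\psi(\mathcal M,\bar b')\neq\emptyset$ for an automorphic image $\bar b'$ of $\bar b$, the two sets coincide. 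The idea is that a common element $\bar s(\bar c,\bar b)=\bar s(\bar c',\bar b')$ forces, by unique readability (axioms $A_i$, $B_{ij}$) and the occurs-check axioms $C_t$, every occurrence of a $\bar z$-coordinate in $\bar s$ to take the same value under $\bar b$ and $\bar b'$. A cleaner route may be to replace $\bar z$ by the tuple of those subterm positions that actually appear in $\bar s$. Then "$\bar s(\bar w,\bar b)=\bar s(\bar w,\bar b')$ wherever solvable" becomes the equality $\bar b\restriction_{\mathrm{occ}}=\bar b'\restriction_{\mathrm{occ}}$, which depends only on the parameters and not on $\bar c$.

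This yields a coset-like picture: the set $\psi(\mathcal M,\bar b)$ is determined by the "visible" part of $\bar b$, and two such sets are equal or disjoint. The $N_s$ conditions on the free $\bar w$ are $\emptyset$-definable and do not spoil the argument. The negative atoms $x_j\neq u_j$ and $y_h\neq v_h$ are the main obstacle, because they sit inside the existential quantifier. Our plan is to expand them by inclusion–exclusion. Since the $\bar y$ are forced to be terms in $\bar w$, the subformula
\[
\exists\bar w\,(\text{positive}\wedge\neg\text{eq}_1\wedge\dots\wedge\neg\text{eq}_\ell)
\]
should be rewritten as a Boolean combination of formulas $\exists\bar w(\text{positive}\wedge\text{some eq}_j)$. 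This rewriting uses that a proper specialization of a most general unifier is "thin". Concretely, the set of solutions of the positive part is never covered by finitely many proper specializations, by the infinitude of models together with axioms $C_t$; a similar fact is used implicitly by Maltsev. Each resulting formula is again of positive type, hence normal by the previous step. We expect this reduction of the inequalities to be the hardest part and would verify it by explicit unification combined with a counting/compactness argument.

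Finally, $N_i(x)$ without parameters is $\emptyset$-definable and hence normal. Assembling the pieces, every formula is equivalent modulo the theory to a Boolean combination of normal formulas, so the theory is normal.
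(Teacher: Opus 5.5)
Your route is genuinely different from the paper's. The paper never analyses Maltsev's standard formulas directly. It passes to the definitional expansion $\Sigma^*$ by the projections $\pi^{f_i}_j$ and proves full quantifier elimination for each completion there. It then only needs to observe that an atomic formula $\pi_1(x_i)=\pi_2(y_j)$ is normal, because $\pi_1$ is a function.

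Your positive-core step can be repaired, but as written it is inaccurate. In a standard formula the parameter variables occur as left-hand sides $z=s(\bar y)$, so unification expresses $\bar x'$ through iterated \emph{projections} of $\bar z$, not through terms in $\bar z$. For example, $\exists y_1,y_2\,(z=f(y_1,y_2)\wedge x=f(y_1,y_1))$ defines $\{f(\pi^f_1(b),\pi^f_1(b))\}$. With that correction your leaf-by-leaf unique-readability argument does give normality of the positive core, and the correction is exactly the paper's device.

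The genuine gap is the step you flag yourself: eliminating the negated atoms under $\exists\bar y$. The principle you propose for it is false in $\mathcal{T}_m$ for $1\le m<\omega$. That principle says the solutions of the positive part are never covered by finitely many proper specializations. But a bound variable carrying the full condition $N(y_r)$ ranges over only $m$ elements. Concretely, in $\mathcal{F}_2$ with generators $e_1,e_2$ and $f$ binary, take the standard formula
\[
\phi(x;z):=\exists y_1,y_2\bigl(x=f(y_1,y_1)\wedge z\neq y_2\wedge y_2\neq y_1\wedge N(y_2)\bigr).
\]
The variable $y_2$ is not determined by $(x,z)$. Yet its range $\{e_1,e_2\}$ is covered by $y_2=y_1$ and $y_2=z$ whenever $\{y_1,z\}=\{e_1,e_2\}$. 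So dropping the inequalities generically yields $\exists y\,(x=f(y,y))$, whereas $\phi(\mathcal{F}_2,e_1)=\{f(c,c):c\neq e_2\}$.

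Nor is $\phi$ itself normal. The sets $\phi(\mathcal{F}_2,e_1)$ and $\phi(\mathcal{F}_2,e_2)$ are conjugate under $e_1\leftrightarrow e_2$, and both contain $f(d,d)$ for $d=f(e_1,e_2)$. However, only the first contains $f(e_1,e_1)$. The correct decomposition is
\[
\phi(x;z)\leftrightarrow\exists y\,(x=f(y,y))\wedge\neg\bigl(N(z)\wedge\exists y\,(x=f(y,y)\wedge N(y))\wedge x\neq f(z,z)\bigr).
\]
It comes from counting indecomposables, not from thinness.

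So when there are finitely many indecomposables you need a pigeonhole analysis, which your plan does not contain. In general it must handle several $N$-constrained undetermined variables, with inequalities among themselves and with terms in the determined variables. In the paper this bookkeeping is absorbed into the quantifier-elimination proof, where indecomposables of $\mathcal{M}$ are replaced by those of $\mathcal{N}$. For $\mathcal{T}_0$ and $\mathcal{T}_\omega$ your thinness principle is plausible, since every $N$-constrained range is then infinite or empty, but even there it is only asserted, not proved.
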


We first collect a few auxiliary results needed for the proof of the theorem. We will follow Belegradek \cite{Belegradek1988LocallyFree} and work in a definitional expansion of the theory of locally free algebras. In particular, for every function symbol $f_i$ of arity $n_i$ and every $1\leq j\leq n_i$ we introduce a new unary function symbol, denoted $\pi_j^{f_i}$, that it should be understood as the projection of its argument in the $j$-th coordinate if the argument is $f_i$-decomposable or the identity otherwise. More formally: 

\[
\pi^{f_i}_j(x) \;=\;
\begin{cases}
x_j, & \text{if } x = f_i(x_1,\dots,x_{n_i}) \text{ for some } x_1,\dots,x_{n_i},\\[4pt]
x, & \text{otherwise.}
\end{cases}
\]

We call the signature expanded by these new symbols $\Sigma^*$. We prove that the theory of locally free algebras extended by the explicit definitions for each new function symbol, denoted by $T_{\Sigma^*}$, admits (full) quantifier elimination after moving to any completion. We first prove two preparatory lemmas. 

\begin{lemma}\label{Equalities}
Let $t,s$ be $\Sigma$-terms and $\bar b$ a tuple from a locally free algebra $\mathcal{M}$. Let $\phi(y):=\exists \bar z\big(y=t(\bar z, \bar b)\big)$ and $\psi(y):=\exists \bar z\big(y=s(\bar z, \bar b)\big)$ Then either $\phi(y)\land\psi(y)$ is inconsistent or there exists a $\Sigma$-term $r$ such that 
\[\mathcal{M}\models \big(\phi(y)\land \psi(y)\big) \leftrightarrow \exists \bar z\big(y=r(\bar z, \bar b)\big)\]
\end{lemma}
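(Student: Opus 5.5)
The plan is to prove the lemma by unification of the two terms $t(\bar z,\bar b)$ and $s(\bar z',\bar b)$, with the parameters $\bar b$ treated as constants. We first rename variables so that $\phi$ and $\psi$ use disjoint tuples $\bar z$ and $\bar z'$. The conjunction $\phi(y)\land\psi(y)$ is then equivalent in $\mathcal{M}$ to
\[
\exists \bar z\,\exists \bar z'\big(y=t(\bar z,\bar b)\land t(\bar z,\bar b)=s(\bar z',\bar b)\big).
\]
So we must understand the solution set of the single equation $t(\bar z,\bar b)=s(\bar z',\bar b)$ in the unknowns $\bar z,\bar z'$.

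We solve this equation by running the Robinson/Martelli--Montanari unification algorithm in the locally free algebra $\mathcal{M}$. Each step must preserve the solution set in $\mathcal{M}$, and the axioms supply exactly the rules needed. Axiom $A_i$ justifies decomposition: $f_i(\bar u)=f_i(\bar v)$ becomes $\bigwedge u_l=v_l$. Axiom $B_{ij}$ justifies clash failure when the head symbols differ. Axiom $C_t$ justifies the occurs check: $z=u$ with $z$ occurring properly in $u$ has no solution. Variable elimination $z=u$ (with $z$ not in $u$) substitutes $u$ for $z$ everywhere and is trivially sound.

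The new feature is that parameters $b\in\bar b$ behave as constants, but they are elements of $\mathcal{M}$ that may themselves be decomposable. This gives two further cases. An equation $b=b'$ between parameters is decided by checking it in $\mathcal{M}$. An equation $b=f_i(\bar u)$ is handled as follows: if $b$ is not $f_i$-decomposable in $\mathcal{M}$, the system fails; otherwise write $b=f_i(\bar c)$, which is unique by $A_i$, and replace the equation by $\bar c=\bar u$. This step enlarges the parameter tuple to elements of the downward closures of $\bar b$, so we must generalise the statement to allow such parameters, or simply note that the final $r$ may use finitely many elements of $\bigcup_n dc_{\le n}(b)$. The statement must then be read with $\bar b$ meaning parameters from $\mathcal{M}$, or those new elements must be re-expressed.

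To prove termination, we use the standard measure: number of unsolved variables, then total term size, then number of equations. Each parameter-decomposition step replaces a parameter by strictly shallower ones, and $dc_{\le n}$ is finite for each $n$. For a descending chain of decompositions to continue indefinitely, the opposing term side would have to grow, which is impossible. So the process terminates. On success it yields a solved form $\bar z=\bar u(\bar w,\bar b)$, $\bar z'=\bar u'(\bar w,\bar b)$, where $\bar w$ are the remaining free variables. We then take $r(\bar w,\bar b):=t(\bar u(\bar w,\bar b),\bar b)$, and $\exists\bar w\,(y=r)$ is equivalent to $\phi\land\psi$. On failure, the conjunction is inconsistent.

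The main obstacle is the parameter-decomposition step and the bookkeeping it forces. Replacing $b$ by its components introduces parameters outside $\bar b$. We resolve this by allowing $r$ to use parameters from $\mathcal{M}$, which is harmless for the later quantifier-elimination application. Alternatively, one can observe that the components are $\emptyset$-definable from $\bar b$ via the projections $\pi^{f_i}_j$, so they can be absorbed in $\Sigma^*$.
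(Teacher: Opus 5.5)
Your argument is correct in substance, but it takes a different route from the paper. The paper inducts on the maximal height of $t,s$. When both have head symbol $f_0$, it treats each coordinate pair $(t_l,s_l)$ separately, obtains $r_l$ from the induction hypothesis, and outputs $f_0(r_1,\dots,r_{n_0})$. You instead solve the single equation $t(\bar z,\bar b)=s(\bar z',\bar b)$ by unification, treating parameters as constants and adding a parameter-decomposition rule. The paper's induction is shorter, but your route handles two things it misses.

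\emph{Shared variables.} Unification propagates bindings across coordinates, whereas solving coordinates independently is only sound when they share no variables. For $t=f(z,z)$ and $s=f(z_1,f(z_2,z_3))$ the correct answer is $f(f(z_2,z_3),f(z_2,z_3))$. The coordinatewise recipe returns a bare variable for $r_1$ and $f(z_2,z_3)$ for $r_2$, which loses the constraint that the two coordinates coincide.

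\emph{Decomposable parameters.} Your concern here is not bookkeeping: the lemma is false as literally stated. In $\mathcal{F}_2$ with generators $e_1,e_2$, let $b=f(e_1,e_2)$, $t=f(b,z_1)$ and $s=f(f(z_2,z_3),z_2)$. Then $\phi\wedge\psi$ defines the singleton $\{f(b,e_1)\}$. A term in which some variable occurs is injective in that variable (apply Axiom $A_i$ along the branch), so it takes infinitely many values. Hence $r$ would have to be a term in $b$ alone. Since $r(b)=f(b,e_1)$, this would put $e_1$ in the subalgebra generated by $b$. That is impossible, because every element of that subalgebra is decomposable while $e_1$ is not. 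So the option of re-expressing the new parameters in terms of $\bar b$ is not available in general. The correct statement is the one you arrive at: $r$ may use parameters from $\operatorname{pr}(\bar b)$. Equivalently, $r$ is a $\Sigma^*$-term in $\bar b$; here it is $f(b,\pi^f_1(b))$. This is exactly what the quantifier-elimination proof needs, since its parameters lie in a $\Sigma^*$-substructure, which is closed under projections.

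Your termination argument needs one repair. If parameters count as atoms, a parameter decomposition such as $b=f(z_1,z_2)\mapsto\{c_1=z_1,\;c_2=z_2\}$ does not decrease total term size. Also, ``strictly shallower'' means nothing when every element decomposes indefinitely, as in models of $\mathcal{T}_0$, and finiteness of $dc_{\le n}$ plays no role. Replace total term size with the number of function-symbol occurrences, so the lexicographic measure begins with the number of unsolved variables and then that count. Parameter decomposition removes one function-symbol occurrence, introduces no variables, and leaves solved equations untouched. That is what your remark about the opposing term side actually shows.
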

\begin{proof}
Assume that the conjunction $\phi(y)\land\psi(y)$ is consistent. The proof proceeds by induction on the maximum height of $t$ and $s$. If the maximum height is $0$, then obviously the formula that works is either $y=b_i$ for some $b_i$ from $\bar b$, or $\exists z\big(y=z\big)$. Now assume the result is true for any terms of maximum height less than $n$, we show it holds for terms of maximum height less than $n+1$. We may assume that $t$ is $f_0$-decomposable. Since $\phi(y)\land\psi(y)$ is consistent we must have that $s$ is $f_0$-decomposable as well. Now if $\phi(y)=\exists \bar z\big(y=f_0(t_1, \ldots, t_{n_0})\big)$ and $\psi(y)=\exists \bar z\big(y=f_0(s_1, \ldots, s_{n_0})\big)$, we must have  coordinate wise consistency, i.e. $\exists\bar z\big(x_i=t_i(\bar z, \bar b)\big)\land \exists\bar z\big(x_i=s_i(\bar z, \bar b)\big)$ are consistent for every $i\leq n_0$. Hence by the induction hypothesis, there exists $r_1, \ldots, r_{n_0}$ for which for every $i\leq n_0$: 
$$\mathcal{M}\models\Big(\exists\bar z\big(x_i=t_i(\bar z, \bar b)\big)\land \exists\bar z\big(x_i=s_i(\bar z, \bar b)\big)\Big)\leftrightarrow \exists \bar z\big(x_i=r_i(\bar z, \bar b)\big)$$  

In particular the formula $\exists \bar z\big(y=f_0(r_1(\bar z, \bar b), \ldots, r_{n_0}(\bar z, \bar b))\big)$ works.
\end{proof}

\begin{lemma}\label{Inequalities}
Let  $t, t_1, \ldots, t_m$ be $\Sigma$-terms and $\bar b$ a tuple from a locally free algebra $\mathcal{M}$. Let $\bigl\{ \exists \bar z \big( y=t(\bar z, \bar b)\big), \forall \bar z \big( y\neq t_1(\bar z, \bar b)\big), \ldots, \forall \bar z \big( y\neq t_m(\bar z, \bar b)\big)\bigr\}$ be a consistent set of formulas. 

Then there exist terms $s_1, \ldots, s_n$ and $\bar a$ from the union of the indecomposable elements of $\mathcal{M}$ (if they exist) and the $\Sigma$-subalgebra generated by $\bar b$ such that $\exists \bar w \big( y=t(s_1(\bar w), \ldots, s_n(\bar w), \bar b)\big)$ implies the conjunction of the above formulas.
\end{lemma}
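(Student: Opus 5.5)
We will prove this by refining the variables of $t$: substitute for them terms over fresh variables $\bar w$ and parameters, chosen so that no instance can match any $t_i$. Write $\bar z=(z_1,\ldots,z_n)$ for the variables of $t$ and $\Phi(y)$ for the given set of formulas. Fix a realization $c$ of $\Phi$ in an elementary extension $\mathcal{N}\succeq\mathcal{M}$. We have $c=t(\bar d,\bar b)$ for some $\bar d$ in $\mathcal{N}$, and $c\neq t_i(\bar e,\bar b)$ for all $i$ and all $\bar e$.

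\textbf{Choice of height.} Let $h$ exceed the heights of all $t_i$. For each $d_j$, we unfold $d_j$ in $\mathcal{N}$ down to depth $h$, or until we reach an indecomposable element or an element of the subalgebra $\langle\bar b\rangle$ generated by $\bar b$. This writes $d_j=s_j(\bar w^0,\bar a)$. Here $\bar a$ collects the leaves that are indecomposable or lie in $\langle\bar b\rangle$. The remaining leaves $\bar w^0$ are decomposable elements outside $\langle\bar b\rangle$ reached at depth exactly $h$.

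\textbf{The parameter problem.} Indecomposables and elements of $\langle\bar b\rangle$ in $\mathcal{N}$ need not lie in $\mathcal{M}$. We handle them as follows.
\begin{itemize}
\item Every element of $\langle\bar b\rangle$ is already in $\mathcal{M}$.
\item For indecomposables, the statement permits indecomposable elements of $\mathcal{M}$. If $\mathcal{N}$ has indecomposables not in $\mathcal{M}$, we replace them by fresh variables instead. This is harmless when the theory has infinitely many indecomposables, since by the axioms $D_m$ a completion with finitely many has all of them already in $\mathcal{M}$.
\end{itemize}
Since the statement is first order and the relevant facts transfer, it suffices to produce terms such that the implication holds in $\mathcal{M}$.

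\textbf{Main step.} We must show that $\exists\bar w\,(y=t(s_1(\bar w,\bar a),\ldots,s_n(\bar w,\bar a),\bar b))$ implies each formula $\forall\bar z\,(y\neq t_i(\bar z,\bar b))$. Suppose, for contradiction, that some $y'=t(\bar s(\bar w'),\bar b)=t_i(\bar e,\bar b)$. We compare the two term trees top-down, using axioms $A_i$ and $B_{ij}$ (unique decomposition). The tree $t(\bar s,\bar b)$ has height at least $h$ wherever a $\bar w$-leaf sits, and $h>\mathrm{height}(t_i)$. So every variable position of $t_i$ lies strictly above the $\bar w$-leaves or at $\bar a/\bar b$-positions. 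Hence the matching $t_i(\bar e,\bar b)=t(\bar s(\bar w'),\bar b)$ only constrains $\bar e$ to be certain subterms, and it constrains the $\bar w$-part and $\bar a$-part solely through equalities of subterms of height below $h$ at the top.

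We then transfer these constraints back to $c$. Positions of $t_i$ that are $\bar b$-leaves must match either an $\bar a$-leaf, a $\bar b$-leaf, or a subterm built from them, and these agree identically for $c$. Positions that fall on a $\bar w$-leaf would force the subterm of $t(\bar s(\bar w'),\bar b)$ there to lie in $\langle\bar b\rangle$. Such a subterm has height at least the depth of the $\bar w$-leaf below it, and it contains a variable. At this point we must ensure that $\bar w^0$ are chosen outside $\langle\bar b\rangle$. Doing so makes the same substitution of $\bar e$ work for $c$: $c=t_i(\bar e',\bar b)$ with $\bar e'$ the corresponding subterms of $\bar d$. This contradicts the consistency assumption that $c$ avoids $t_i$.

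\textbf{Expected obstacle.} The delicate case is a $\bar b$-leaf of $t_i$ sitting above a $\bar w$-leaf, which would demand $w'_j\in\langle\bar b\rangle$ up to a finite term. To close this gap we would first try increasing $h$ beyond the heights of the $t_i$ plus the heights of the $\bar b$-terms involved. With that choice, any element of $\langle\bar b\rangle$ matched there has bounded height, while a subterm containing a $\bar w$-leaf at depth $h$ has height at least $h$. The two are then incompatible via axioms $C_t$ and unique readability.
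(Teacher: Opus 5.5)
\textbf{There is a genuine gap in your Main step.} In the formula $\exists\bar w\,\bigl(y=t(\bar s(\bar w,\bar a),\bar b)\bigr)$ the leaves range over \emph{all} elements. Everything that made $c$ avoid $t_i$, however, was a property of the particular values $\bar w^0$. A match $t(\bar s(\bar w',\bar a),\bar b)=t_i(\bar e,\bar b)$ does not merely assign subterms to the variables of $t_i$. A variable occurring twice in $t_i$ forces two different subterms of the left side to be equal. A parameter $b_k$ of $t_i$ sitting above a $\bar w$-leaf forces a subterm with free leaves to equal $b_k$. Both constraints can be met by a suitable $\bar w'$ even though they fail at $\bar w^0$, so nothing transfers back to $c$.

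Two concrete failures:
\begin{enumerate}
\item Take $\Sigma=\{f\}$ with $f$ binary, $\mathcal{M}\models\mathcal{T}_0$, $\bar b$ empty, $t=z$ and $t_1=f(z_1,z_1)$. The set is consistent, since $f(u,v)$ with $u\neq v$ realizes it. For every $h$ and every realization, your unfolding is the full binary tree of depth $h$ with $2^h$ free leaves, because there are no indecomposables and $\langle\bar b\rangle=\emptyset$. Giving all leaves the same value produces an element of the form $f(v,v)$.
\item Take $\bar b=(b)$ and $t_1=f(b,z_1)$. Suppose the unfolding of the first argument of $c$ meets no element of $\langle b\rangle$. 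Then letting those leaves be the depth-$(h-1)$ components of $b$ yields $f(b,\ldots)$.
\end{enumerate}
Your proposed repair for the second case, enlarging $h$, does not work. Elements of $\langle\bar b\rangle$ have no bounded height: $b_k$ need not be indecomposable, and in a model of $\mathcal{T}_0$ it decomposes to every depth. Axiom $C_t$ only forbids an element from being a proper component of itself; it does not stop a term with free variables from taking the value $b_k$.

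\textbf{The missing idea is to pin down positions} so that no specialization of the remaining variables can create a forbidden identification. The paper never frees a coordinate blindly. It inducts on the height of $t$ and uses a witness only to guide a coordinate-by-coordinate refinement of the set of forbidden terms:
\begin{itemize}
\item indecomposable coordinates are kept as parameters;
\item at the base level it substitutes elements of $\langle\bar b\rangle$ chosen to differ from the finitely many values occurring in that position;
\item at the coordinate where the witness escapes all surviving forbidden terms, it applies the induction hypothesis to that argument.
\end{itemize}
When no parameters are available, as in the first example, one needs rigid shapes with repeated variables. For instance, $\exists w\,\bigl(y=f(w,f(w,w))\bigr)$ does exclude $f(z_1,z_1)$, since a match forces $w=f(w,w)$, contrary to $C_t$.

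Finally, replacing new indecomposables of $\mathcal{N}$ by fresh variables is not harmless either. Such a variable can be instantiated by a decomposable element and then match a $t_i$ that demands a decomposition there. The step is also unnecessary: a finite consistent set of formulas over $\mathcal{M}$ is already realized in $\mathcal{M}$.
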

\begin{proof}
The proof is by induction on the height of the term $t$. If $height(t)=0$, then $\exists \bar z \big( y=\bar z, \bar b)\big)$ is equivalent either to $y=y$ or to $y=b_i$ for some $b_i$ in $\bar b$. In the latter case the result follows trivially. In the former case, we first consider the $t_i$ terms that are $f_0$-decomposable. If non exists, then we set $\exists \bar z \big (y=f_0(\bar z)\big)$ and the result follows. If some exist and the conjunction of those formulas is not equivalent to $\forall \bar z\big(y\neq f_0(\bar z)\big)$, then we collect the negations of those formulas in the set $F_0(y):=\big\{\exists \bar z \big(y=t_{i_1}(\bar z, \bar b)\big), \exists \bar z \big(y=t_{i_2}(\bar z, \bar b)\big), \ldots,  \exists \bar z \big(y=t_{i_\ell}(\bar z, \bar b)\big)\big\}$. We prove that there exists a formula of the form $\exists \bar z \big(y=f_0(s_1(\bar z, \bar b), \ldots, s_{n_0}(\bar z, \bar b))\big)$ that works, i.e. it implies the conjunction of $\lnot F(y)$, by induction on the maximal height of the arguments inside the function symbol $f_0$.

For height $0$, we have that every such formula is equivalent to $\exists \bar z \big(y=f_0(\bar z, \bar b)\big)$ in which at least one $b_i$ from $\bar b$ appears. Then the formula $\exists \bar z \,\bigl(y = f_0(\bar z, \bar c)\bigr)$, where, in every occurrence of a parameter $b_i$ in any formula from $F_0(y)$, we substitute an element $c_i$ from the $\Sigma$-subalgebra generated by $\bar b$ chosen to be distinct from all finitely many elements appearing in the same position, yields the desired conclusion.

Suppose, the result is true for any terms of height at most $n$, we show it is true for terms of height at most $n+1$. By the hypothesis, there must be some tuple $\bar d$ such that $\mathcal{M}\models \bigwedge\lnot F_0(f_0(\bar d)))$. We fix those entries that are indecomposable elements, and we may assume that for at least one co-ordinate the element of this co-ordinate is not indecomposable. Without loss of generality suppose the first $i$ co-ordinates of the tuple $\bar d$ are indecomposable elements and the rest are not. We keep those formulas from $\F_0(y)$ which have the same indecomposable elements as $\bar d$ in the corresponding co-ordinates or just a variable, we must have at least one such formula. We call this subset $F_0^1(y)\subseteq F_0(y)$. We continue refining $F_0^1(y)$ by keeping the formulas with which there exists some $\bar z$ that gives the value $d_{i+1}$ in the $i+1$-co-ordinate of $f_0$, if no such formula exists then if $d+1$ is $f_j$-decomposable, we consider those formulas whose $i+1$-th term is $f_j$-decomposable, by the induction hypothesis there exists a term $r$ whose solution set lives in the complement of the union of the solution sets of the aforementioned terms, hence choosing one formula from $F_0^1(y)$ and changing its $i+1$-th coordinate to this term works. We can now proceed up to the last co-ordinate and we claim that the repeatedly refined set of formulas does not contain any formula whose last co-ordinate term can take the value $d_{n_0}$. Indeed, then we would have that $\bar d$ satisfies $F_0(y)$. Hence the argument we used for the $i+1$-th co-ordinate, still applies here and we get the result. 

We thus may assume that the inequalities imply that $y$ is indecomposable and possibly not equal to some list of indecomposable elements (this list cannot be exhaustive). Then the formula that works is $y=c$, for $c$ some indecomposable element not in the previous list of indecomposable elements.  

To complete the induction assume that we have the result for any term $t$ of height at most $n$ and we want to prove it for terms of height $n+1$. Without loss of generality $t$ is $f_0$-decomposable. We consider again the set $F_0(y)$, if it is empty we are done. We know that there exists $\bar d$ such that  $f_0(\bar d)$ satisfies $\bigwedge\lnot F_0(y)$ and $\exists \bar z\big(y=t(\bar z, \bar b)\big)$. Let's fix for notational convenience the term $t:=f_0(s_1, \ldots, s_{n_0})$. Some of those coordinates correspond to indecomposable elements and as before we may assume that the first $i$ coordinates have indecomposable elements while the rest do not. For at least one coordinate the corresponding element is decomposable,  otherwise the formula $y=f_0(d_1, \ldots, d_{n_0})$ for the tuple of indecomposable elements works. Now, refine $F_0(y)$ to the subset $F_0^1(y)$ by keeping the formulas that have the same indecomposable elements in the corresponding coordinates or just a variable. We must have at least one such formula, otherwise the formula $\exists \bar z\big(y=f_0(d_1, \ldots, d_i, s_{i+1}, \ldots, s_{n_0})\big)$  works. We keep refining $F_0^1(y)$ by keeping the formulas that for some value for $\bar z$ give the value $d_{i+1}$ to their $i+1$-coordinate. Notice that this value for $\bar z$ must be consistent with fixing the value of the first $i$ coordinates, one needs to be careful because we allowed not just indecomposable elements but also single variables to appear - the values of these variables will be fixed to the value of the indecomposable element. Again at least one such formula must exist, otherwise by the induction hypothesis (since the term $s_{i+1}$ has height at most $n$) we can find a term that once plugged in in the $i+1$-th coordinate of $t$ in the place of $s_{i+1}$ gives us what we want. By repeatedly applying this argument we reach at the last coordinate for which indeed there cannot be formulas and values for $\bar z$ that yield the element $d_{n_0}$, hence we get the term and formula we wanted.

\end{proof}

\begin{theorem}
Any completion of the first-order theory $T_\Sigma^*$ admits (full) quantifier elimination.  
\end{theorem}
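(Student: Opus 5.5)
We prove quantifier elimination by the standard back-and-forth criterion: for every completion $T$ of $T_{\Sigma^*}$, it suffices to show that whenever $\mathcal{M},\mathcal{N}\models T$ with $\mathcal{N}$ $|M|^+$-saturated, $\mathcal{A}\subseteq\mathcal{M}$ is a finitely generated $\Sigma^*$-substructure, $\iota:\mathcal{A}\to\mathcal{N}$ is a $\Sigma^*$-embedding and $a\in M$, then $\iota$ extends to a $\Sigma^*$-embedding of the substructure generated by $\mathcal{A}\cup\{a\}$. Equivalently, we show that every primitive $\Sigma^*$-formula $\exists y\,\theta(\bar x,y)$, where $\theta$ is a conjunction of literals, is equivalent modulo $T$ to a quantifier-free formula. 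We run it in the embedding form.

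The first point is to understand $\Sigma^*$-substructures. Because the projections are present, a finitely generated $\Sigma^*$-substructure $A$ is closed downward: if $c=f_i(\bar d)$ then each $d_j=\pi^{f_i}_j(c)\in A$. Moreover, whether $c$ is $f_i$-decomposable is quantifier-free expressible: $c=f_i(\pi^{f_i}_1(c),\dots,\pi^{f_i}_{n_i}(c))$. Indecomposability $N(c)$ is therefore quantifier-free as well. Consequently $A$ is the $\Sigma$-subalgebra generated by its set $A_0$ of \emph{minimal} elements, those $c\in A$ that are indecomposable in $\mathcal{M}$. By local freeness and the axioms $A_i,B_{ij},C_t$, each finitely generated such subalgebra is freely generated by $A_0$. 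The embedding $\iota$ preserves $N$ and the projections, so it maps elements indecomposable in $\mathcal{M}$ to elements indecomposable in $\mathcal{N}$, and it maps $A_0$ injectively onto the minimal elements of $\iota(A)$.

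For the extension step, let $a\in M\setminus A$. Its downward closure in $\mathcal{M}$ stays outside $A$ until it hits $A$, so we can write $a=t(\bar e,\bar c)$ with $\bar c\in A$ and $\bar e$ pairwise distinct elements of $M$. Each $e_j$ is either indecomposable in $\mathcal{M}$, or is one of finitely many decomposable elements that must simply be ``not in $A$ and not decomposing into the required shape''. The natural choice is to take $\bar e$ to be the leaves of the full decomposition tree of $a$ relative to $A$. These leaves are then indecomposable elements of $\mathcal{M}$ not lying in $A$, and $a$ is obtained by a term over $A\cup\bar e$ in which no subterm with a variable lies in $A$. Note that the downward closure might be infinite for elements of infinite ``depth'', as in $T_0$. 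So instead we cut at a finite depth and let the $e_j$ be arbitrary elements not in $A$, recording only the quantifier-free type over $A$ of the finitely many elements in $dc_{\le}$. We then need $e'_j\in N$ such that the following hold: the $e'_j$ are pairwise distinct and outside $\iota(A)$; $N(e'_j)$ holds exactly when $N(e_j)$ does; each $e'_j$ is $f_i$-decomposable exactly when $e_j$ is; and $t(\bar e',\iota\bar c)\notin\iota(A)$, together with the analogous condition for all subterms. By freeness this last requirement reduces to the leaves avoiding finitely many forbidden values.

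Realising these conditions is the main obstacle, namely counting indecomposables. If $e_j$ is indecomposable and outside $A$, we need a fresh indecomposable in $\mathcal{N}$ outside $\iota(A)$. Here completeness of $T$ is used. If $T$ fixes exactly $m$ indecomposables, then $\mathcal{M}$ and $\mathcal{N}$ both have exactly $m$. Since $\iota$ is injective on indecomposables in $A$, the number of indecomposables outside $A$ in $\mathcal{M}$ equals that outside $\iota(A)$ in $\mathcal{N}$, and the fresh ones can be matched. If $T$ has infinitely many, saturation supplies them. For a decomposable leaf we need an element that is $f_i$-decomposable, avoids finitely many values, and whose whole downward tree avoids $\iota(A)$ up to the recorded depth. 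This is a consistent partial type over $\iota(A)$, and Lemma \ref{Inequalities} shows that such a set of conditions (one positive decomposition formula plus finitely many negated ones) is realised, even by a term over indecomposables and the generated subalgebra. Finite satisfiability then follows, and saturation of $\mathcal{N}$ realises the type. Finally, freeness (Lemma \ref{Equalities} handles the coincidence of decompositions) guarantees that the map $A\cup\{a\}\mapsto\iota(A)\cup\{a'\}$ extends to an isomorphism of the generated $\Sigma^*$-substructures: both are free on $A_0\cup\bar e$ and on $\iota(A_0)\cup\bar e'$ respectively, and both have the same decomposition and projection pattern.
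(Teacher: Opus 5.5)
Your reduction to the one-step extension criterion and your count of fresh indecomposables are fine. The gap is the structural claim that the extension step rests on, which is false. A finitely generated $\Sigma^*$-substructure is $\Sigma\text{-}\mathrm{cl}(\operatorname{pr}(\bar c))$. It is generated by its indecomposable elements, let alone freely, only when every projection chain of $\bar c$ terminates. In a model of $\mathcal{T}_0$ there are no indecomposables at all. In saturated models of $\mathcal{T}_k$, $k\geq 1$, there are elements with $a=f(a_1,e)$, $a_1=f(a_2,e),\dots$. Quantifier elimination must be checked against all such models.

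Consequently your final step, that the generated substructures are free on $A_0\cup\bar e$ and $\iota(A_0)\cup\bar e'$, has no content. Decomposable leaves are not free generators. Moreover $\langle A,a\rangle_{\Sigma^*}$ contains all of $\operatorname{pr}(e_j)$, whose deeper projections may coincide with each other or fall into $A$ in ways your leaf conditions never record. Realizing a partial type of the leaves at one fixed cut therefore does not yield an embedding of $\langle A,a\rangle_{\Sigma^*}$.

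Even down to the cut depth, your conditions (pairwise distinct, outside $\iota(A)$, matching $N$- and $f_i$-decomposability) do not pin down the quantifier-free type. Cut at depth $2$ and let $a=f(f(e_1,e_4),f(e_2,e_3))$. Assume the seven nodes of this tree are pairwise distinct and the $e_j$ are $f$-decomposable and outside $A$; in particular $e_1\neq f(e_2,e_3)$. The choice $e'_1:=f(e'_2,e'_3)$ meets every requirement you list; note $e'_1\notin\iota(A)$ because $\iota(A)$ is projection-closed. Yet $\pi^f_1\pi^f_1(a')=\pi^f_2(a')$, whereas $\pi^f_1\pi^f_1(a)\neq\pi^f_2(a)$.

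To repair your route you would need two things. First, a genericity condition on the leaves that forbids such coincidences among the finitely many nodes of the cut tree; for example, no leaf lies in the $\Sigma$-closure of $\iota(A)$, the fresh indecomposables and the other leaves. Second, you would need to run the construction separately for each finite fragment of $\mathrm{qftp}_{\Sigma^*}(a/A)$. Choose the cut depth only after normalizing the atomic formulas to equalities between compositions of projections, and only then use saturation to realize the whole type.

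The paper does this fragment-wise step directly and without saturation. For a single quantifier-free $\phi(\bar a,y)$, it rewrites each equation $\pi_1(a_i)=\pi_2(y)$ as a disjunction of formulas $\exists\bar z\,(y=t(\bar z,\pi_1(a_i)))$. It merges the positive part with Lemma~\ref{Equalities}. It then uses Lemma~\ref{Inequalities} to obtain one term over $\mathcal{A}$ and indecomposables whose instances avoid all the negated conditions. This term transfers to $\mathcal{N}$ because the completion fixes the number of indecomposables.
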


\begin{proof}
We first fix a completion of $T_\Sigma^*$ that we call $T^*$. Recall that this amounts to deciding the number of indecomposable elements. 
We will prove that if $\mathcal{M}, \mathcal{N}\models T^*$ and $\mathcal{A}$ is any $\Sigma^*$-substructure of $\mathcal{M}, \mathcal{N}$, then for any quantifier free formula $\phi(\bar x, y)$, and for any $\bar a$ from $\mathcal{A}$ if $\mathcal{M}\models \phi(\bar a, b)$, then there is $c\in \mathcal{N}$ such that $\mathcal{N}\models \phi(\bar a, c)$. 

We observe that any atomic formula $t_1(\bar x, y)=t_2(\bar x, y)$ for $t_1, t_2$ $\Sigma^*$-terms is equivalent to conjunctions of atomic formulas of the form $\pi_{i_1}(x_\nu)=\pi_{j_1}(x_\mu), \ldots, \pi_{i_n}(x_\nu)=\pi_{j_n}(y)$, where $\pi_k$'s are compositions of functions symbols in $\Sigma^*\setminus \Sigma$, e.g. $\pi_1(x):=\pi_3^{f_0}(\pi_2^{f_1}(x))$. 

We may assume that the quantifier free formula $\phi(\bar x, y)$ consists of conjunctions of formulas of the form $\pi_{i}(x_\nu)=\pi_{j}(y)$ and negations of such formulas. Suppose $\mathcal{M}\models \phi(\bar a, b)$ for some $\bar a$ from $\mathcal{A}$ and $b$ in $\mathcal{M}$. Each formula of the form $\pi_{1}(a_i)=\pi_{2}(y)$ defines a disjunction of formulas of the form $\exists \bar z \big(y=t_1(\bar z, \pi_1(a_i))\big)$ for some $\Sigma$-term $t_1$ that depends on $\pi_2$ (notice that $t_1$ is a term in the original signature $\Sigma$ and $\pi_1(a_i)$ is an element in $\mathcal{A}$). We consider all possible combinations of choices of one disjunctive clause from each conjunction. Some of those combinations will be consistent. Each consistent combination is equivalent, by Lemma \ref{Equalities}, to an atomic formula of the form  $\exists \bar z \big(y=t(z_1, \ldots, z_m, \pi_{1_\ell}(a_1), \ldots,\pi_{n_\ell}(a_n))\big)$, where $t(z_1, \ldots, z_m, \pi_{1_\ell}(a_1), \ldots,\pi_{n_\ell}(a_n))$ is a $\Sigma$-term. 

Any solution (in $\mathcal{M}$) of such formula satisfies all conjunctive clauses of equations. Moreover, any $b$ that satisfies the conjunction of equations, satisfies one of these formulas. We collect these formulas in a set, which we call $E(y)$. Apparently any $m$-tuple from $\mathcal{A}$ can be plugged in into $t$ and provide a suitable solution that lives in $\mathcal{A}$.  If each term $t$ is a term in $\pi_{i_1}(a_1), \ldots,\pi_{i_n}(a_n)$ alone we are done. Hence we may assume that some $z_j$ appears in some term $t$. 

We now deal with the inequalities. By the same argument each inequality is equivalent to conjunctions of formulas of the form $\forall \bar z \big(y\neq t_1(\bar z, \pi_1(a_i))\big)$, where $t_1$ is a $\Sigma$-term. These conjunctions, which we call $N(y)$, cannot imply the negation of each formula in $E(y)$, otherwise $\phi(\bar a, y)$ would be inconsistent. In particular, we must have for at least one formula in $E(y)$, say $\exists \bar z \big(y=t(z_1, \ldots, z_m, \pi_{1_\ell}(a_1), \ldots,\pi_{n_\ell}(a_n))\big)$ that this formula is consistent with $N(y)$. By Lemma \ref{Inequalities} there exists a formula of the form $\exists \bar z \big(y=r(\bar z, \bar d)\big)$ where $\bar d$ is a tuple from the union of the set of indecomposable elements of $\mathcal{M}$ and $\mathcal{A}$. Since $\mathcal{M}, \mathcal{N}\models T^*$ we always have an adequate amount of indecomposable elements in $\mathcal{N}$ to replace the indecomposable elements of $\mathcal{M}$ in $\exists \bar z \big(y=r(\bar z, \bar d)\big)$. It is not hard to see that any solution of the later formula (with the indecomposable elements replaced) in $\mathcal{N}$ yields an element $c$ in $\mathcal{N}$ for which $\mathcal{N}\models \phi(\bar a, c)$.
\end{proof}

We now may prove Theorem \ref{NormalProof}. 
\begin{proof}[Proof of Theorem \ref{NormalProof}]
It is enough to prove that every $\Sigma^*$-formula $\phi(\bar x, \bar y)$ is a Boolean combination of normal formulas. Indeed, by quantifier elimination, every $\Sigma^*$-formula is equivalent to Boolean combinations of formulas of the form $\pi_1(x_i)=\pi_2(y_j)$ and each such formula is normal.
\end{proof}

It follows from \cite{HrushovskiPillay1987WeaklyNormal} that:

\begin{corollary}
Any complete theory of locally free algebras is stable and $1$-based.
\end{corollary}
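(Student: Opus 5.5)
Stability and $1$-basedness both follow from normality (Theorem \ref{NormalProof}) through the general results of Hrushovski and Pillay \cite{HrushovskiPillay1987WeaklyNormal}. The route is: normal $\Rightarrow$ weakly normal $\Rightarrow$ stable and $1$-based.

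\textbf{Step 1: normal formulas are weakly normal.} Fix a completion $T$ and pass to its definitional expansion $T^*$ in $\Sigma^*$. This changes neither the definable sets nor the automorphism groups of models, so stability and $1$-basedness transfer back to $\Sigma$.

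By the quantifier elimination for $T^*$ and the proof of Theorem \ref{NormalProof}, every formula is a Boolean combination of formulas $\pi_1(x_i)=\pi_2(y_j)$, each normal in $\bar x$. For such a formula $\phi(\bar x,\bar y)$, any two instances $\phi(\mathcal{M},\bar b)$ and $\phi(\mathcal{M},\bar b')$ are equal or disjoint, at least when $\bar b,\bar b'$ have the same type. In fact the argument gives this for arbitrary $\bar b,\bar b'$: the instance is the fibre of the definable map $\bar x\mapsto\pi_1(x_i)$ over the element $\pi_2(b_j)$, and fibres of a function are equal or disjoint. Hence each such $\phi$ is weakly normal in the sense of Hrushovski--Pillay: some (here $2$) distinct conjugates always have empty intersection.

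\textbf{Step 2: invoke Hrushovski--Pillay.} A theory in which every formula is a Boolean combination of weakly normal formulas is stable and $1$-based \cite{HrushovskiPillay1987WeaklyNormal}. A more hands-on argument for stability alone: a normal formula cannot have the order property. If $\phi(\bar a_i,\bar b_k)$ holds iff $i<k$, then the sets $\phi(\mathcal{M},\bar b_k)$ form an increasing chain of distinct pairwise non-disjoint sets, contradicting the fact that they are equal or disjoint. Boolean combinations of stable formulas are stable, so the fact recalled earlier on the order property gives stability of $T$.

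\textbf{Main obstacle.} The only delicate point is reconciling the paper's definition of normality, stated via automorphism conjugates in a model, with the Hrushovski--Pillay hypothesis, stated via instances in a saturated model. I would handle this by observing, as in Step 1, that the fibre description makes the equal-or-disjoint property hold outright for all parameters in the monster model, so nothing further is needed.
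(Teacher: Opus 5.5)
Your proposal is correct and follows the same route as the paper, which obtains the corollary directly from normality (Theorem~\ref{NormalProof}) by citing Hrushovski--Pillay. Your fibre observation (instances of $\pi_1(x_i)=\pi_2(y_j)$ are fibres of $\bar x\mapsto\pi_1(x_i)$, hence equal or disjoint for arbitrary parameters) cleanly reconciles the paper's automorphism-based definition of normality with the hypothesis of that theorem. There is one small imprecision in your supplementary stability argument: the sets $\phi(\mathcal{M},\bar b_k)$ need not form an increasing chain in $\mathcal{M}$, only their traces on $\{\bar a_i\}$ do; however, $\phi(\mathcal{M},\bar b_1)$ and $\phi(\mathcal{M},\bar b_2)$ are already distinct and both contain $\bar a_0$, which is all you need.
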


We may also prove that the theory of locally free algebras with no indecomposable elements does not admit a prime model. 

\begin{proposition}
The first-order theory $\mathcal{T}_0$ does not have a prime model.
\end{proposition}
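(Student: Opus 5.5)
The plan is to show that no model of $\mathcal{T}_0$ is prime by proving that $\mathcal{T}_0$ has a consistent formula with no isolated type below it. For a countable complete theory, a prime model exists only if the theory is atomic, i.e.\ isolated types are dense. So it suffices to show that $\mathcal{T}_0$ has no isolated $1$-type at all. Equivalently, every consistent formula $\phi(x)$ splits into two consistent, mutually inconsistent formulas. This density failure also gives the promised "uncountably many types" remark.

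Fix a consistent $\phi(x)$ and a model $\mathcal{M}\models\mathcal{T}_0$ with $a\in M$ satisfying $\phi$. I would use the quantifier-elimination theorem for $T^*$, the completion of $T_{\Sigma^*}$ with zero indecomposables. In $\Sigma^*$ with no parameters, $\phi(x)$ is a Boolean combination of atomic formulas $\pi_1(x)=\pi_2(x)$, where the $\pi$'s are compositions of projections. Only finitely many such projections occur, so $\phi$ only "sees" the skeleton $sk_n(a)$ of $a$ up to some depth $n$, together with the equalities among the elements of $dc_{\le n}(a)$. Here the equalities recorded by the projections are those among $dc_{\le n}(a)$ and along projection paths of bounded length. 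Every element decomposes, since there are no indecomposables, so $sk_n(a)\neq\bot$ for every $n$. I would then pick an element $e\in dc_{n}(a)$ and write $e=f_0(e_1,\dots,e_{n_0})$, using the blanket assumption that $f_0$ has arity $\ge 2$. Two mutually inconsistent refinements are then available:
\[\psi_0(x):=\phi(x)\wedge \pi(x)\text{ has first two $f_0$-projections equal},\qquad \psi_1(x):=\phi(x)\wedge \text{they are unequal},\]
where $\pi$ is the projection path leading from $x$ to $e$. If the top symbol of $e$ varies, I use instead the disjunction over its possible top symbols. These two formulas are visibly contradictory.

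The main step is to show that both $\psi_0$ and $\psi_1$ are consistent. The projection path to $e$ has length $n$, and $\phi$ only constrains projection paths of length $\le n$. Consider the subalgebra generated by replacing the depth-$n$ elements $dc_n(a)$ by fresh "free" elements, and then decompose each fresh element freely. We may set the two children of $e$ to be equal or distinct as we wish, provided this does not create new coincidences among paths of length $\le n$. I would realize both choices inside an elementary extension. By saturation, or compactness together with the axioms $C_t$ and $A_i,B_{ij}$, there are elements below depth $n$ behaving like free generators. This is where the QE for $T^*$ is used: the two resulting elements $a_0,a_1$ have the same atomic $\Sigma^*$-diagram on projection paths of length $\le n$ as $a$, hence satisfy $\phi$, while differing at depth $n+1$.

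The obstacle I expect is the bookkeeping in that last step. Changing the depth-$(n+1)$ structure under $e$ must not create or destroy equalities $\pi_1(x)=\pi_2(x)$ with both paths of length $\le n$. The risk is that $e$ coincides with another element of $dc_{\le n}(a)$, for instance $e=\pi'(a)$ for a shorter path $\pi'$, or that $e$ is shared by several positions. My remedy is to perform the modification uniformly on the whole equivalence class of $e$ under the recorded equalities. I would define $a_0,a_1$ as $sk_n(a)$ evaluated at new tuples in which equal depth-$n$ entries stay equal. Axiom $A_i$, together with acyclicity from $C_t$, guarantees that no unwanted identifications at depth $\le n$ arise when the depth-$n$ entries are chosen pairwise "generic". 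Once that is checked, $\phi$ is not isolating, so no type is isolated, and hence $\mathcal{T}_0$ has no prime model.
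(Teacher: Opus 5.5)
Your overall strategy is the one in the paper's sketch. By quantifier elimination in $\Sigma^*$, a formula only queries projection paths of length $\le n$. So one produces two elements with the same depth-$\le n$ data that differ at depth $n+1$, and no $1$-type is isolated. The paper merely asserts that such elements exist. Your construction of them, however, fails at exactly the obstacle you flag, and your remedy does not repair it.

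\textbf{The gap.} Evaluating $sk_n(a)$ at generic entries, keeping only equal depth-$n$ entries equal, destroys coincidences between a depth-$n$ entry and a shallower node. Paths of length $\le n$ record such coincidences.
\begin{itemize}
\item Example: take $f=f_0$ binary, $a=f(b,f(b,c))$ with $b=f(b_1,b_2)$, and $n=2$. Then $\pi^f_1\pi^f_2(x)=\pi^f_1(x)$ holds of $a$. It fails of $f(f(y_1,y_2),f(y_3,y_4))$ for generic $y_i$. So your $a_0,a_1$ need not satisfy $\phi$.
\item Worse, suppose your $e$ is such an entry, here $e=\pi^f_1\pi^f_2(a)=b$. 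Its children $b_1,b_2$ are depth-$2$ nodes, and $\phi$ may contain $\pi^f_1\pi^f_1(x)\neq\pi^f_2\pi^f_1(x)$. Together with $\pi^f_1\pi^f_2(x)=\pi^f_1(x)$, this makes $\psi_0$ inconsistent.
\item Modifying ``the whole equivalence class of $e$'' cannot help, since that class contains a node whose children $\phi$ already sees.
\end{itemize}
So ``$\phi$ only constrains paths of length $\le n$'' does not make the children of an arbitrary depth-$n$ entry free.

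\textbf{The fix.} You need a correct choice of $e$ and a substitution that respects every recorded coincidence. Take $n\ge 1$.
\begin{itemize}
\item By local freeness, $\langle dc_n(a)\rangle$ is free. Its basis $B$ is contained in $dc_n(a)$ and is nonempty.
\item $B$ consists of depth-$n$ entries equal to no node of depth $<n$, since such nodes are decomposable inside the subalgebra. Equivalently, by $C_t$, not every depth-$n$ entry can equal a shallower node.
\item Pick $e\in B$. Let $F$ be free on copies $\tilde b$ of $b\in B\setminus\{e\}$ together with new generators $u,v$. Put $e'=f_0(u,u,\ldots,u)$, respectively $e'=f_0(u,v,\ldots,v)$.
\item The set $\{\tilde b\}\cup\{e'\}$ freely generates its subalgebra. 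Hence $b\mapsto\tilde b$, $e\mapsto e'$ extends to an embedding $\sigma\colon\langle B\rangle\to F$.
\item Embed $F$ into a model of $\mathcal{T}_0$, using the paper's embedding of locally free algebras.
\item Decompositions are unique and preserved by embeddings. So $\sigma$ preserves the decompositions of all nodes of depth $<n$ and all equalities among nodes of depth $\le n$. That is everything a path of length $\le n$ can query.
\item Therefore $\sigma(a)$ satisfies $\phi$, while $\pi(\sigma(a))=e'$ decides the new equality either way.
\end{itemize}
The top symbol of $e$ never needs to be preserved, so your disjunction over top symbols is unnecessary.
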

\begin{proof}[Sketch]
Let $a$ be an element of a locally free algebra $\mathcal{M}$ without inndecomposable elements. We will show that $tp^{\mathcal{M}}(a)$ is not isolated. Indeed, by quantifier elimination all formulas in $tp^{\mathcal{M}}(a)$ have the form $\pi_1(x)=\pi_2(x)$ and their negations. We consider all formulas in the type, where the composition of projections $p_i^{f_j}$ on both sides of the equalities and inequalities have length at most $n$. These formulas describe some relations between the arguments of skeletons of height at most $n$. We observe that we can always find two elements that have identical relations with $a$ up to $n$-skeleton, but disagree on the $n+1$-skeleton. Hence, $tp^{\mathcal{M}}(a)$ is not isolated.    
\end{proof}

\begin{corollary}\label{UncountablyT0}
The first-order theory $\mathcal{T}_0$ has uncountably many types over the empty set.
\end{corollary}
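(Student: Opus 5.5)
The plan is to deduce Corollary~\ref{UncountablyT0} from the preceding proposition, that $\mathcal{T}_0$ has no prime model, together with the standard fact linking prime models to countable type spaces. The key ingredient is a classical theorem. If $T$ is a complete theory in a countable language and $S_n(\emptyset)$ is countable for every $n$, then the isolated types are dense in each $S_n(T)$. Indeed, a non-isolated formula would split into a binary tree of consistent formulas, producing $2^{\aleph_0}$ types. The omitting types construction then yields a countable atomic model, which is prime.

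The signature $\Sigma$ is finite and purely functional, so the language is countable. By Theorem~\ref{IndecomposableEquivalentElementary} with $m=0$, the theory $\mathcal{T}_0$ is complete.

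Now argue by contraposition. Suppose $S_n(\mathcal{T}_0)$ were countable for every $n<\omega$. By the classical theorem above, $\mathcal{T}_0$ would then have a countable atomic model, and hence a prime model. This contradicts the preceding proposition. Therefore $S_n(\mathcal{T}_0)$ is uncountable for some $n$.

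It remains to pass from some $n$ down to $n=1$, which is the formulation "over the empty set" most likely intended. For this I would use the proof sketch of the proposition more directly. That sketch shows that no $1$-type realized in a model of $\mathcal{T}_0$ is isolated. Since every complete type is realized in some model, and $\mathcal{T}_0$ has models (the proposition is not vacuous, as the paper's co-theory construction produces locally free algebras with no indecomposables), it follows that $S_1(\mathcal{T}_0)$ contains no isolated points at all.

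Next I would argue that $S_1(\mathcal{T}_0)$ is nonempty and has no isolated points. It is a nonempty compact Hausdorff, zero-dimensional space, so it is perfect. By the Cantor--Bendixson argument, building a binary tree of consistent formulas below $x=x$ by repeatedly splitting non-isolating formulas, it has cardinality $2^{\aleph_0}$.

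The main point of care is this reduction to $1$-types, which relies on the sketch's claim that every realized $1$-type is non-isolated. If one only accepts the bare statement "no prime model", the conclusion still holds for $S_n$ for some $n$, which suffices for the notion of "uncountably many types over the empty set" in general.
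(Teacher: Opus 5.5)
Your proposal is correct and follows the paper's route. The paper deduces the corollary directly from the preceding proposition (no prime model), via the classical fact that a complete countable theory with countably many $n$-types for every $n$ has a countable atomic, hence prime, model; your additional reduction to $1$-types is a valid sharpening the paper does not state, using the sketch's claim that no realized $1$-type is isolated to make $S_1(\mathcal{T}_0)$ a nonempty perfect Stone space of size $2^{\aleph_0}$.
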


The notion of elimination of imaginaries have been introduced as a meaningful tool in model theory by Poizat in \cite{poizat1983galois}. For the different notions of elimination of imaginaries and their connections with existence of canonical bases we refer the reader to \cite{CasanovasFarre2004WeakEI}.

Before proving a number of preparatory lemmas concerning the algebraic-closure operator in locally free algebras, we introduce the following notation. Let $A$ be a subset of a locally free algebra. The $\Sigma$-closure of $A$, denoted $\Sigma\text{-}\mathrm{cl}(A)$, is the union of the increasing chain
\[
A_0:=A,\qquad 
A_{i+1}:=\{\, f_j(\bar a)\ :\ \bar a\in A_i^{n_j}\, \& \, f_j\in \Sigma\}\ \ \text{for } i\in\mathbb{N},
\]
Thus
\[
\Sigma\text{-}\mathrm{cl}(A)\;=\;\bigcup_{i\in\mathbb{N}} A_i.
\]
If $A\neq\emptyset$, then $\Sigma\text{-}\mathrm{cl}(A)$ is a subalgebra of the ambient algebra, and in particular it is itself locally free.

We also define the {\em projection closure} of $A\subset \mathcal{M}$ as 
\[
\operatorname{pr}(A)\ :=\ \bigl\{\pi(a)\ :\ a\in A\ \text{and}\ \pi \text{ is a finite composition of projection maps } \pi_i^{f_j}\bigr\}.
\]
By convention $pr(\emptyset)=\emptyset$ and $\Sigma\text{-}\mathrm{cl}(\emptyset)=\emptyset$
\begin{lemma}\label{algclosureind}
Let $\mathcal{M}$ be a locally free algebra with no indecomposable elements, and let $A\subseteq \mathcal{M}$. 
Then the (real-sort) algebraic closure of $A$ coincides with the (real-sort) definable closure of $A$, and moreover they are both equal to the $\Sigma$-closure of $pr(A)$.
\end{lemma}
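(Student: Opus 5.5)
We prove the chain of inclusions
\[
\Sigma\text{-}\mathrm{cl}(\operatorname{pr}(A))\subseteq \operatorname{dcl}(A)\subseteq \operatorname{acl}(A)\subseteq \Sigma\text{-}\mathrm{cl}(\operatorname{pr}(A)).
\]
The middle inclusion is automatic. For the first, each projection $\pi_j^{f_i}$ is $\emptyset$-definable in $\Sigma$ by its explicit definition. Hence every element of $\operatorname{pr}(A)$ is $A$-definable, and so is every $\Sigma$-term applied to such elements. If $A=\emptyset$ all sets are empty except possibly $\operatorname{acl}(\emptyset)$, and that case is handled by the last inclusion with $B=\emptyset$.

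The substantive part is the last inclusion. Put $B:=\Sigma\text{-}\mathrm{cl}(\operatorname{pr}(A))$ and fix $c\notin B$. We must show $c$ has infinitely many conjugates over $A$ in some elementary extension; equivalently, by compactness, that $tp(c/A)$ has infinitely many realizations. By the quantifier elimination for the completion $T^*$ of $T_{\Sigma^*}$ (here $\mathcal{T}_0$ with projections), $tp(c/A)$ is determined by the atomic and negated atomic formulas $\pi_1(c)=\pi_2(a)$ and $\pi_1(c)=\pi_2(c)$, with $a\in A$ and the $\pi$'s compositions of projections.

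First I will analyse the structure of $c$ relative to $B$. Note that $B$ is closed under projections: a projection of a term over $\operatorname{pr}(A)$ is either a subterm or lies in $\operatorname{pr}(A)$. Since $\mathcal{T}_0$ has no indecomposables and $c\notin B$, we unfold $c$ as a term over $B$ together with elements outside $B$. Define the \emph{frontier} of $c$: starting from $c$, repeatedly take projections; if we reach an element of $B$, we stop. An element outside $B$ whose one-step projections all lie in $B$ cannot exist, since $B$ is $\Sigma$-closed. So either some branch never enters $B$, or the unfolding is infinite. Rather than relying on this dichotomy, the cleaner route is to build conjugates directly. Choose a large depth $n$ exceeding the depths of all projection words in any finite fragment of the type. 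Write $c=t(d_1,\dots,d_\ell,\bar b)$ with $\bar b\in B$, $t$ of height $n$, and at least one $d_i\notin B$. Such a $d_i$ exists at every depth, because otherwise $c\in B$.

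Next, I replace the elements $d_i\notin B$ by new elements $d_i'$ realizing the same configuration at lower depth but differing deep down. For example, take $d_i'$ to be $d_i$ with a deep subterm swapped for a fresh element in an elementary extension, such as an element outside $\Sigma\text{-}\mathrm{cl}(B\cup\{\text{all relevant elements}\})$. I then verify that $c'=t(\bar d',\bar b)$ satisfies the same formulas of depth $\le n$ over $A$. The key check concerns equalities $\pi_1(c)=\pi_2(a)$. Any such equality whose left side lands at or below a $d_i\notin B$ would force a subterm of $d_i$ to be a projection of $a\in A$, hence in $B$. For equalities landing at depth less than $n$ this is preserved by construction. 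Inequalities are preserved because the new elements are generic, and locally freeness together with Axioms $C_t$ rules out accidental coincidences. This realizes each finite fragment of $tp(c/A)$ by infinitely many distinct $c'$, and compactness then gives infinitely many realizations, so $c\notin\operatorname{acl}(A)$.

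I expect the main obstacle to be this last verification. We must ensure that the modification preserves all equalities with $\operatorname{pr}(A)$ and all internal equalities $\pi_1(c)=\pi_2(c)$ up to depth $n$, while still producing infinitely many pairwise distinct elements. I would handle this by performing the modification uniformly at a single depth $n$, substituting the same fresh element for equal occurrences so that internal equalities are kept. Distinctness would come from using infinitely many pairwise distinct fresh elements, which exist because models of $\mathcal{T}_0$ are infinite.
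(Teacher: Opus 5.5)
Your first two inclusions are fine. The proof of $\operatorname{acl}(A)\subseteq B$, however, has a genuine gap exactly at the step you flag, and the repair you propose does not work.

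Substituting fresh elements \emph{positionally} at a single depth $n$ does not preserve internal equalities $\pi_1(c)=\pi_2(c)$ when one element occurs in the unfolding of $c$ at two different depths. This holds even if you use the same fresh element for equal entries at that depth. Take $c=f(d,f(d,d))$ with $f$ binary, $d\notin B$, and $n=2$. The depth-$2$ entries are the two arguments $d_1,d_2$ of $d$, together with $d,d$. At least one of $d_1,d_2$ lies outside $B$, since $B$ is $\Sigma$-closed. Your recipe gives $c'=f(f(d_1',d_2'),f(d',d'))$. Nothing forces $d'=f(d_1',d_2')$, and for a fresh $d'$ it fails. Hence $c'$ violates $\pi^{f}_1(x)=\pi^{f}_1(\pi^{f}_2(x))$, which is a depth-$2$ formula of $tp(c/A)$. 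Moving the substitution to a deeper uniform level $N\geq n$ has the same defect.

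A working version must operate on \emph{elements}, not positions. In the finite set $S=\{\pi(c):|\pi|\le n\}$, replace only those $w\in S\setminus B$ that are not reached by any word of length $<n$. Then rebuild every other element of $S\setminus B$ from its recorded decomposition. You also need the fresh elements to be jointly free, meaning their projection closures are pairwise disjoint and disjoint from $\mathcal{M}$. Merely lying outside a $\Sigma$-closure does not exclude coincidences like $e_1'=g(e_2')$, which would destroy the required inequalities. The existence of such a family needs its own compactness argument.

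The paper sidesteps all of this combinatorics. Since $B$ is closed under projections and $\mathcal{M}$ has no indecomposables, every element of $B$ decomposes inside $B$. As a subalgebra, $B$ is locally free (for $A\neq\emptyset$), so by completeness it is a model of $\mathcal{T}_0$. By model completeness of $\mathcal{T}_0$, it is then an elementary substructure of $\mathcal{M}$. Elementary substructures are algebraically closed, so $\operatorname{acl}(A)\subseteq\operatorname{acl}(B)=B$. The case $A=\emptyset$ is handled separately, by a compactness argument showing that every type over $\emptyset$ has infinitely many realizations. If you use this argument for your last inclusion when $A\neq\emptyset$, and treat $A=\emptyset$ as the paper does, your proof becomes complete.
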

\begin{proof}
We treat the case $A=\emptyset$ separately. We will prove that any type over the empty set has infinitely many solutions in an enough saturated model. This implies that $dcl(\emptyset)=acl(\emptyset)=\emptyset$ By quantifier elimination, a complete type $p(x)$, is determined by (in)decomposability conditions and equalities or inequalities between elements of all possible depths. Let $a$ be a realization of this type in an elementary extension, $\mathcal{N}$, of $\mathcal{M}$. Consider the set of formulas $q(x):=p(x)\cup \{x\neq a\}$. Then an easy compactness argument shows that $q(x)$ is consistent. Indeed, for every finite subset $\Pi(x)$ of $p(x)$, there are infinitely many elements that satisfy it (since there are no indecomposable elements), hence $\Pi(x)\cup\{x\neq a\}$ is consistent. Thus, we may find $b\neq a$, that satisfies $p(x)$. Repeating the argument we can find more than $n$ realizations for every $n$, thus infinitely many.   

We next assume that $A$ is not empty. 
First we observe that the $\Sigma\text{-}\mathrm{cl}(pr(A))$ is a subset of $dcl(A)$. If $\mathcal{M}$ has no indecomposable element, then $\Sigma-cl(pr(A))$ is a model of $Th(\mathcal{M})$, hence it is definably closed and in particular $dcl(A)=acl(A)=\Sigma\text{-}\mathrm{cl}(prA)$. 
\end{proof}

\begin{lemma}\label{algclosurefin}
Let $\mathcal{M}$ be a locally free algebra with only finitely many indecomposable elements, and let $A\subseteq \mathcal{M}$.
Then the (real-sort) algebraic closure of $A$ is the $\Sigma$-closure of
$\operatorname{Ind}(\mathcal{M}) \,\cup\, \operatorname{pr}(A)$,
where $\operatorname{Ind}(\mathcal{M})$ denotes the set of indecomposable elements of $\mathcal{M}$.

\end{lemma}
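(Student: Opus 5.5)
We prove two inclusions: first $\Sigma\text{-}\mathrm{cl}(\operatorname{Ind}(\mathcal{M})\cup\operatorname{pr}(A))\subseteq acl(A)$, then the reverse.

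For the easy inclusion, note that $\operatorname{Ind}(\mathcal{M})$ is a finite $\emptyset$-definable set (the solution set of $N(x)$, of size $m$ by Axiom $D_m$). Hence each of its elements lies in $acl(\emptyset)$. Each projection $\pi^{f_i}_j$ is $\emptyset$-definable, so $\operatorname{pr}(A)\subseteq dcl(A)$. Since $acl(A)$ is closed under the $\emptyset$-definable functions $f_i$, the $\Sigma$-closure of $\operatorname{Ind}(\mathcal{M})\cup\operatorname{pr}(A)$ is contained in $acl(A)$. If $m=0$, this reduces to Lemma~\ref{algclosureind}, so we may assume $m\geq1$. Then $C:=\Sigma\text{-}\mathrm{cl}(\operatorname{Ind}(\mathcal{M})\cup\operatorname{pr}(A))$ is a nonempty subalgebra.

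For the reverse inclusion, we take $b\notin C$ and show $tp(b/A)$ has infinitely many realizations in a sufficiently saturated elementary extension $\mathcal{N}\succeq\mathcal{M}$. This mirrors the compactness argument in Lemma~\ref{algclosureind}.

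\begin{itemize}
\item By quantifier elimination in $T^*$, $tp(b/A)$ is axiomatized by formulas $\pi_1(b)=\pi_2(a)$, their negations, and (in)decomposability conditions on the $\pi(b)$. The parameters involved are the elements $\pi_2(a)\in\operatorname{pr}(A)$.
\item Because $b\notin C$, the "tree" of $b$ must hit an element outside $C$ at every depth. Since $C$ contains all indecomposables, repeatedly projecting from $b$ along coordinates outside $C$ never terminates in an indecomposable. We must check that every branch from $b$ staying outside $C$ is infinite. It is: a branch ending at an indecomposable would end in $C$, and if all children of a decomposable node lie in $C$, so does the node itself.
\item Fix a finite fragment $\Pi(x)\subseteq tp(b/A)$ and let $n$ bound the depth of projections occurring in it. Take the $n$-skeleton $sk_n(b)=t(\bar c)$. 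The elements of $\bar c$ lying in $C$ can be frozen. At least one position $c_\ell\notin C$ occurs, and it is decomposable, by the previous step.
\item Replace $c_\ell$ (and the equal entries of $\bar c$) by $f_0(c_\ell,\dots,c_\ell)$, or by any other fresh element. Any element $c'$ not among the finitely many elements of $\operatorname{pr}(A)$ mentioned in $\Pi$ will do, so that no new coincidence $\pi_1(x)=\pi_2(a)$ arises and the skeleton up to depth $n$ is preserved. The axioms $A_i$, $B_{ij}$, $C_t$ guarantee that distinct choices give distinct elements $t(\bar c')$.
\item Infinitely many such choices exist, so $\Pi(x)\cup\{x\neq b_1,\dots,x\neq b_k\}$ is consistent for all $k$. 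By compactness, $tp(b/A)$ is non-algebraic, and so $b\notin acl(A)$.
\end{itemize}

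The main obstacle is the step that substitutes fresh elements. Here we must verify that the substitution preserves every literal of $\Pi$, including the inequalities $\pi_1(x)\neq\pi_2(a)$ and the decomposability type of each node of depth at most $n$, while the frozen coordinates in $C$ keep all equalities. I would handle this by choosing the replacements to have the same depth-$n$ skeleton as $c_\ell$ itself, using $c_\ell$'s own decomposition. The replacement would be $f(\ldots)$ with the deeper non-$C$ leaf modified. This reduces the check to a single deeper position, and pushing the change below depth $n$ automatically preserves all literals of $\Pi$.
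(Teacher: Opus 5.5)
Your first inclusion is fine. The reverse inclusion has a genuine gap, exactly at the step you flag as the main obstacle.

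\textbf{The substitution step fails.} Replacing the leaf $c_\ell$ of $sk_n(b)$, together with the equal leaf entries, by a fresh $c'$ does not preserve the literals $\pi_1(x)=\pi_2(x)$ of $\Pi$. Other nodes of the skeleton may have values that are terms in $c_\ell$, and $c_\ell$ may itself be the value of an internal node. Concretely:
\begin{itemize}
\item Let $f$ be binary, $c\notin C$, $d,k\in C$, $e:=f(c,d)$ and $b:=f(f(c,d),f(e,k))$.
\item Then $\Pi=\{\pi^{f}_1(x)=\pi^{f}_1(\pi^{f}_2(x))\}\subseteq tp(b/A)$ with $n=2$.
\item The leaves of the $2$-skeleton outside $C$ are $c$ and $e$; note $e\notin C$ because $C$ is closed under projections.
\item Replacing $c$ alone gives $f(f(c',d),f(e,k))$, and replacing $e$ alone gives $f(f(c,d),f(e',k))$. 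Both violate $\Pi$ for every $c'\neq c$ and $e'\neq e$, including replacements with the same skeleton as the original leaf.
\end{itemize}
So your last paragraph does not repair this. A change below depth $n$ still changes every ancestor of the changed node. Equalities among nodes of depth $\le n$ survive only if every node whose value depends on the changed element is recomputed coherently, and that is precisely what needs proof.

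\textbf{The freshness condition is too weak.} A literal $\pi(x)\neq e_0$ with $\pi(b)=f(c_\ell,d)$ and $e_0=f(h,d)$ is destroyed by $c'=h$. Yet $h$ is only a projection of a parameter and need not occur in $\Pi$. Likewise, $f_0(c_\ell,\dots,c_\ell)$ may coincide with another node of the skeleton.

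\textbf{How the direct route can be repaired.}
\begin{itemize}
\item Choose $c_\ell$ minimal, for the proper-descendant order (acyclic by Axioms $C_t$), among the leaves outside $C$. Then $c_\ell$ is not the value of any internal node: otherwise some leaf outside $C$ would be a proper descendant of it.
\item Write every node value of the skeleton as a term over the leaves that are not values of internal nodes, and substitute $c'$ for $c_\ell$ in all these terms. This treats $c_\ell$ as a generator being changed, not as a position.
\item Take $c'\notin C$. Then no new coincidence with an element of $\operatorname{pr}(A)$ can arise, since one would force $c'\in\operatorname{pr}(A)$.
\item Also take $c'$ outside the finitely many values solving the one-variable equations between distinct node terms. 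Each such equation fails at $c_\ell$, so it is not an identity, and hence has at most one solution in a locally free algebra.
\end{itemize}

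\textbf{The paper's route avoids all of this bookkeeping.} The closure $C$ is a locally free subalgebra whose indecomposables are exactly those of $\mathcal{M}$. This is because an element of $\operatorname{pr}(A)$ that is decomposable in $\mathcal{M}$ is decomposed by elements of $\operatorname{pr}(A)$. Hence $C$ is a model of the same complete theory. Since $C$ is also closed under the projections, it is a $\Sigma^*$-substructure. By quantifier elimination for $T^*$, it is therefore an elementary substructure of $\mathcal{M}$ containing $A$, which forces $acl(A)\subseteq C$.
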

\begin{proof}
If $\mathcal{M}$ has finitely many indecomposable elements, then $\operatorname{Ind}(\mathcal{M})$ is a subset of $acl(\emptyset)$. In particular the $\Sigma$-closure of $\operatorname{Ind}(\mathcal{M}) \,\cup\, \operatorname{pr}(A)$ is a subset of $acl(A)$. Since $\operatorname{Ind}(\mathcal{M}) \,\cup\, \operatorname{pr}(A)$ is not empty its $\Sigma$-closure is a locally free (sub)-algebra of $\mathcal{M}$ and in addition it has the same number of indecomposable elements as $\mathcal{M}$. Indeed, it contains $\operatorname{Ind}(\mathcal{M})$, and if any other element in the closure is indecomposable, then it must belong to $\operatorname{pr}(A)\setminus \operatorname{Ind}(\mathcal{M})$, but then it is decomposable in $\mathcal{M}$ and in particular the elements that decompose it belong to $\operatorname{Ind}(\mathcal{M}) \,\cup\, \operatorname{pr}(A)$, a contradiction. 

\end{proof}

Note that in this case if there are more than one (but finitely many)  indecomposable elements  $dcl(\emptyset)\neq acl(\emptyset)$. Indeed, it is not hard to check that $dcl(\emptyset)=\emptyset$, while $acl(\emptyset)$ is the prime model of the theory. 

The following lemma is less trivial. 

\begin{lemma}\label{algclosureinf}
Let $\mathcal{M}$ be a locally free algebra with infinitely many  indecomposable elements, and let $A\subseteq \mathcal{M}$.
Then the (real-sort) algebraic closure of $A$ coincides with the $\Sigma$-closure of $pr(A)$.
\end{lemma}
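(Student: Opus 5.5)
We prove two inclusions: $\Sigma\text{-}\mathrm{cl}(pr(A))\subseteq acl(A)$ and $acl(A)\subseteq \Sigma\text{-}\mathrm{cl}(pr(A))$. The first is immediate. Every projection $\pi^{f_i}_j$ is $\emptyset$-definable, so $pr(A)\subseteq dcl(A)$. Every $f_i$ is a basic function, so $dcl(A)$ is closed under $\Sigma$. Hence $\Sigma\text{-}\mathrm{cl}(pr(A))\subseteq dcl(A)\subseteq acl(A)$. If $A=\emptyset$, we must show $acl(\emptyset)=\emptyset$. Here we argue as in Lemma \ref{algclosureind}: any finite fragment of a type over $\emptyset$ is implied by finitely many skeleton conditions together with some indecomposability constraints. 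Since there are infinitely many indecomposables, any realization can be modified by swapping an indecomposable leaf in its skeleton for a fresh one, which gives infinitely many realizations.

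For the reverse inclusion, put $B:=\Sigma\text{-}\mathrm{cl}(pr(A))$ and assume $A\neq\emptyset$, so that $B$ is a subalgebra closed under the projections. Indeed, a projection of $f_j(\bar b)$ is either a coordinate of $\bar b$ or the element itself, and an element of $pr(A)$ stays in $pr(A)$ under projection. So $B$ is a $\Sigma^*$-substructure of $\mathcal{M}$. Take $c\notin B$ in a sufficiently saturated $\mathcal{N}\succeq\mathcal{M}$. We must produce infinitely many realizations of $tp(c/B)$, or equivalently of $tp(c/A)$. Since $c$ is not in $B$, the element $c$ together with its iterated projections forms a finite-or-infinite tree hanging below $c$. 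Every node of this tree that lies outside $B$ is either decomposable with some children outside $B$, or is an indecomposable element not in $B$. We consider two cases.

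\emph{Case 1.} Some node $e$ in the projection tree of $c$ is an indecomposable element outside $B$. Choose such an $e$ of minimal depth below $c$, so that $c=t(e,\bar d)$ for a $\Sigma$-term $t$ with $\bar d\in B$. Here we use that the nodes on the path are outside $B$ and that the other branches either lie in $B$ or are handled by induction; the simplest route is to pick a node $e$ in the tree and to decompose $c$ fully down to leaves that are in $B$ or indecomposable. Indecomposables outside $B$ are pairwise indiscernible over $B$. To see this, use the quantifier elimination of the previous theorem: atomic $\Sigma^*$-formulas $\pi_1(x)=\pi_2(b)$ with $x$ indecomposable reduce to $x=\pi_2(b)$, and any such $\pi_2(b)$ lies in $B$. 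Hence any permutation of $\mathrm{Ind}\setminus B$ fixing $B$ extends to a partial $\Sigma^*$-isomorphism, and therefore preserves types. Replacing $e$ by infinitely many other indecomposables gives infinitely many distinct conjugates of $c$ over $B$, and distinctness follows from unique readability (axioms $A_i,B_{ij}$). The main obstacle is to show that such replacements preserve the full $\Sigma^*$-type. We must check that no atomic formula $\pi(c)=\pi'(b)$ with $b\in B$ distinguishes them, and this holds precisely because $B$ is closed under projections and under $\Sigma$.

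\emph{Case 2.} The projection tree of $c$ contains no indecomposable outside $B$. Then, by local freeness (axioms $C_t$ exclude cycles), every finite path eventually lands in $B$ or continues forever. If every branch ended in $B$ after finitely many steps, then by König's lemma applied to the finitely branching tree, $c$ would be a term over $B$, hence in $B$, a contradiction. So there is an infinite branch of elements outside $B$, all decomposable. The type of $c$ over $B$ is then determined, by quantifier elimination, by the skeleton data together with the equalities of projections with elements of $B$, which occur only at the finite depths where branches exit into $B$. By compactness and saturation, we can realize the same data with a different element at depth $n$ along the infinite branch for each $n$. This yields infinitely many realizations, as in the $A=\emptyset$ argument of Lemma \ref{algclosureind}. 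Both cases give $c\notin acl(A)$.
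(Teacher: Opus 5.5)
Your overall plan is the paper's: show $\Sigma\text{-}\mathrm{cl}(\operatorname{pr}(A))\subseteq dcl(A)$, then show that every $c$ outside $B=\Sigma\text{-}\mathrm{cl}(\operatorname{pr}(A))$ has a non-algebraic type over $A$. The first inclusion and your check that $B$ is a $\Sigma^*$-substructure are fine. Your Case~1 idea is cleaner than the paper's uniform compactness argument: indecomposables outside $B$ all have the same type over $B$, by quantifier elimination.

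As written, though, Case~1 has two problems. First, it asserts $c=t(e,\bar d)$ with $\bar d\in B$. This is false when $c$ also has an infinite branch outside $B$, e.g.\ $c=f(e,u)$ with $u$ infinitely decomposable outside $B$. Second, ``replacing $e$'' by term substitution does not preserve the type when $e$ also occurs, possibly infinitely often, inside other nodes not in $B$. You need no decomposition at all. We have $e=\pi(c)\in dcl(c)$. In a saturated model there are infinitely many indecomposables outside $B$, all with the type of $e$ over $B$, so $e\notin acl(B)$, hence $c\notin acl(B)$. Equivalently, take automorphisms $\sigma_k$ over $B$ with $\sigma_k(e)=e_k$; the conjugates $\sigma_k(c)$ are distinct because $\pi(\sigma_k(c))=e_k$.

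The real gap is Case~2. The sentence ``by compactness and saturation, we can realize the same data with a different element at depth $n$'' is the entire content of that case, and it is only asserted.
\begin{itemize}
\item A finite fragment of $tp(c/B)$ may identify nodes on different branches at different depths, and may pin nodes to elements of $B$. You must show that a deep node on the infinite branch can be altered consistently with all of these constraints.
\item The realizations you get for different $n$ are not shown to be pairwise distinct. To iterate as in Lemma~\ref{algclosureind}, you need that every finite fragment has infinitely many solutions.
\end{itemize}
The paper is equally terse at exactly this point.

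Here is a clean way to close the gap, which also covers $A=\emptyset$. Let $M'=\Sigma\text{-}\mathrm{cl}(B\cup\operatorname{Ind}(\mathcal{N}))$.
\begin{itemize}
\item $M'$ is closed under projections, by your own argument for $B$. It is locally free, and its indecomposables are exactly those of $\mathcal{N}$, so there are infinitely many.
\item By quantifier elimination in $\Sigma^*$, $M'$ is an elementary substructure of $\mathcal{N}$, so $acl(B)\subseteq M'$.
\item Every element of $M'$ has a finite decomposition tree over $B\cup\operatorname{Ind}(\mathcal{N})$. So every Case~2 element lies outside $M'$ and is not algebraic over $B$.
\item Every element of $M'\setminus B$ has an indecomposable outside $B$ among its projections, so the corrected Case~1 argument applies to it.
\end{itemize}
With this, König's lemma and the compactness step are no longer needed.
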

\begin{proof}
The case where $A=\emptyset$ is similar to the proof of Lemma \ref{algclosureind}. The only difference is that in this case we may have formulas implying the indecomposability of some projections of $x$, but because there are infinitely many such indecomposable elements, the realizations of each complete type over the empty set still form an infinite set.

We may assume that $pr(A)$ does not contain infinitely many indecomposable elements, otherwise the result follows as in the proof of Lemma \ref{algclosurefin}. We will show that if $c$ does not belong to $\Sigma\text{-}\mathrm{cl}(pr(A))$, then $p(x):=tp(c/A)$ has infinitely many realizations. Indeed, for each $n$, the $n$-skeleton of $c$ would have at least one elements that does not belong to $\Sigma\text{-}\mathrm{cl}(pr(A))$. Consider the set of formulas $q(x):=p(x)\cup \{x\neq c\}$. Then every finite subset $\Pi(x)\subset p(x)$, has infinitely many solutions, hence $\Pi(x)\cup \{x\neq c\}$ is consistent, and by compactness $q(x)$ is consistent. Repeating the argument we can find more than $n$ realizations for every $n$, thus infinitely many.  
\end{proof}

The following result is an easy consequence of the previous lemmas. 

\begin{corollary}\label{algclosurecap}
Let $\mathcal{M}$ be a locally free algebra and $A, B\subset \mathcal{M}$ be algebraically closed subsets. Then $A\cap B$ is algebraically closed.  
\end{corollary}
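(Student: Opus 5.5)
The plan is to use the explicit descriptions of algebraic closure from Lemmas \ref{algclosureind}, \ref{algclosurefin} and \ref{algclosureinf}. In every case they say that $acl(X)=\Sigma\text{-}\mathrm{cl}(S\cup pr(X))$, where $S=\operatorname{Ind}(\mathcal{M})$ if $\mathcal{M}$ has finitely many indecomposables and $S=\emptyset$ otherwise. I will first extract from these lemmas an intrinsic characterisation: a set $C$ is algebraically closed if and only if $C=acl(C)$. Since $C\subseteq pr(C)$, this holds if and only if $C$ satisfies three conditions:
\begin{itemize}
\item[(i)] $C$ is closed under the operations $f_j$ (given $C\neq\emptyset$);
\item[(ii)] $C$ is closed under every projection $\pi^{f_i}_j$;
\item[(iii)] $S\subseteq C$.
\end{itemize}
For the forward direction, $acl(C)$ is by the lemmas the $\Sigma$-closure of a set that contains $C$, $S$ and $pr(C)$. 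This gives (i) and (iii). For (ii) I need that the $\Sigma$-closure of a projection-closed set is again projection-closed. This holds by unique decomposition (Axioms $A_i$, $B_{ij}$): if $f_j(\bar a)$ with $\bar a\in A_i$ is decomposed, its projections are the entries of $\bar a$, by induction on $i$. Conversely, (i)--(iii) give $pr(C)=C$ and $\Sigma\text{-}\mathrm{cl}(S\cup C)=C$, hence $acl(C)=C$.

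With this characterisation the corollary is immediate. Let $A,B$ be algebraically closed. Each of (i), (ii), (iii) is preserved under intersections: closure under a family of (partial or total) functions is intersection-stable, and $S\subseteq A\cap B$ whenever $S\subseteq A$ and $S\subseteq B$. So $A\cap B$ satisfies (i)--(iii) and is algebraically closed.

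The only care needed is the degenerate case $A\cap B=\emptyset$. By the conventions $pr(\emptyset)=\Sigma\text{-}\mathrm{cl}(\emptyset)=\emptyset$, and by the case $A=\emptyset$ in the lemmas, $acl(\emptyset)=\emptyset$ when there are zero or infinitely many indecomposables, so $\emptyset$ is closed. When there are finitely many but at least one indecomposable, $S\neq\emptyset$ lies in both $A$ and $B$, so the intersection cannot be empty.

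I expect the main obstacle to be the verification that $\Sigma\text{-}\mathrm{cl}$ of a projection-closed set stays projection-closed. There one must handle an element that is indecomposable in $\mathcal{M}$; its projections are itself, so this is harmless. One must also check that no element of $A_{i+1}$ admits a second decomposition, which is exactly what Axioms $A_i$ and $B_{ij}$ guarantee.
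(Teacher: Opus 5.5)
Your proposal is correct and follows essentially the same route as the paper: algebraically closed sets are exactly the $\Sigma$-closed, projection-closed sets (containing $\operatorname{Ind}(\mathcal{M})$ when there are finitely many indecomposables), these properties are preserved under intersection, and an empty intersection can only occur when $acl(\emptyset)=\emptyset$. Your explicit condition (iii) is slightly more careful than the paper's formula $acl(A\cap B)=\Sigma\text{-}\mathrm{cl}(\operatorname{pr}(A\cap B))$, which omits $\operatorname{Ind}(\mathcal{M})$ in the finite case. You do not need the auxiliary lemma on $\Sigma$-closures of projection-closed sets: (ii) follows at once from $\operatorname{pr}(C)\subseteq acl(C)=C$, because the projections are $\emptyset$-definable functions.
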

\begin{proof}
If $A\cap B=\emptyset$, then this can occur only in the cases where the algebra has either infinitely many indecomposable elements or none at all. In either case one has $acl(\emptyset)=\emptyset$. Henceforth assume that $A\cap B\neq\emptyset$. Since both $A$ and $B$ are closed under projections, their intersection is also closed under projections, and therefore
\[
\operatorname{pr}(A\cap B)=A\cap B.
\]
Likewise, as $A$ and $B$ are $\Sigma$-closed, the set $A\cap B$ is $\Sigma$-closed. Consequently,
\[
acl(A\cap B)\;=\;\Sigma\text{-}\mathrm{cl}\bigl(\operatorname{pr}(A\cap B)\bigr)\;=\;A\cap B,
\]
as desired.
 
\end{proof}

\begin{theorem}\label{WEI}
Any completion of the theory of locally free algebras weakly eliminates imaginaries.
\end{theorem}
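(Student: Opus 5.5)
The plan is to use the criterion from \cite{CasanovasFarre2004WeakEI}: a complete stable theory $T$ weakly eliminates imaginaries if and only if every imaginary is interalgebraic with a real tuple. Equivalently, for every finite tuple $\bar a$ in a monster model $\mathbb{M}$ and every $\emptyset$-definable equivalence relation $E$, the class $e=\bar a/E$ satisfies $e\in dcl^{eq}(acl(e)\cap\mathbb{M})$ up to finitely many conjugates. A convenient sufficient condition, which goes back to Pillay's treatment of weak elimination of imaginaries in stable theories, combines two ingredients. The first is that the real algebraic closure operator is well behaved under intersections, in the sense that $acl(A)\cap acl(B)=acl(A\cap B)$ for algebraically closed real sets; this is Corollary \ref{algclosurecap}. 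The second is a sufficient supply of canonical parameters, which we take from the normality of $T$ (Theorem \ref{NormalProof}).

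\textbf{Step 1 (reduce to canonical bases of types).} In a stable theory every imaginary $e$ is interdefinable with the canonical base of a stationary type. Concretely, $e=\bar a/E$ lies in $dcl^{eq}(Cb(stp(\bar a/e)))$ up to algebraicity, and $Cb(stp(\bar a/e))\subseteq acl^{eq}(e)$. It is therefore enough to show that for every stationary type $p=stp(\bar a/C)$ the canonical base $Cb(p)$ is interalgebraic with a real set, namely $acl(Cb(p))\cap\mathbb{M}$.

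\textbf{Step 2 (compute canonical bases via normality and quantifier elimination).} By quantifier elimination in $\Sigma^*$, types are determined by equalities $\pi_1(x_i)=\pi_2(c)$ with $c$ a parameter, together with their negations and the indecomposability of projections. The equalities are normal formulas. The canonical base of such a formula is the real element $\pi_2(c)$, or rather the element of $\Sigma\text{-}\mathrm{cl}(pr(C))$ that it names. The inequalities do not contribute to the definition of a nonforking extension. We then claim that $Cb(p)$ is interdefinable with the finite real set $D=\{\pi(\bar a')\in acl(C)\}$ of projections of a realization $\bar a'$ that lie in $acl(C)$. By Lemmas \ref{algclosureind}, \ref{algclosurefin} and \ref{algclosureinf}, $acl(C)$ is $\Sigma\text{-}\mathrm{cl}(pr(C))$, possibly together with $\mathrm{Ind}(\mathcal M)$. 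The intended argument for the claim is the following. If $\bar a'\models p$, then $\bar a'\downarrow_C B$ holds exactly when no new projection of $\bar a'$ falls into $acl(CB)\setminus acl(C)$. Hence $p$ does not fork over $D$, and $D$ is fixed setwise by every automorphism fixing $p$. This yields $Cb(p)\subseteq acl^{eq}(D)$ and $D\subseteq acl(Cb(p))$.

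\textbf{Step 3 (conclude).} We then derive weak elimination. Given $e$, Step 1 gives a finite real set $D$ with $e\in dcl^{eq}(D)$, and $D$ has only finitely many conjugates over $e$. Its code as a finite set is an imaginary, not a real tuple. However, by Step 2 each element of $D$ is a term over the projections of $\bar a$, so $D\subseteq acl(e)\cap\mathbb{M}$. Using Corollary \ref{algclosurecap}, taking the minimal algebraically closed real set $acl(e)\cap\mathbb{M}$ gives $e\in dcl^{eq}(acl(e)\cap\mathbb{M})$ together with $acl(e)\cap\mathbb M\subseteq acl^{eq}(e)$, which is weak elimination.

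\textbf{Main obstacle.} The hard part is Step 2, the exact description of forking and of the canonical base via the projection closure. In particular one must show that the inequalities $\forall\bar z\,(y\neq t(\bar z,\bar b))$ never force forking. My approach would use Lemma \ref{Inequalities}: from it one obtains, for any consistent mix of such inequalities with equalities, a positive formula that realizes the mix and avoids the new parameters. Care is needed when $k$ is finite and positive, where $acl(\emptyset)\neq dcl(\emptyset)$. This is precisely why only \emph{weak} elimination holds in that case: permutations of the indecomposables prevent $D$ from being coded by a real tuple in $dcl(e)$.
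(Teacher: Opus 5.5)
Your route (reduce to canonical bases of stationary types, then compute them) differs from the paper's. The paper verifies the criterion that every definable set has a smallest algebraically closed real set of definition: it uses Corollary~\ref{algclosurecap} and then checks on the $\Sigma^*$-normal form that a set definable over algebraically closed $A$ and $B$ is definable over $A\cap B$. Your reduction is a legitimate alternative, but as written it only establishes \emph{geometric} elimination of imaginaries, not weak elimination.

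\textbf{The gap.} Your opening criterion already conflates the two notions. ``Every imaginary is interalgebraic with a real tuple'' is geometric elimination. Weak elimination requires a real $\bar c$ with $e\in dcl^{eq}(\bar c)$ and $\bar c\in acl^{eq}(e)$. The same slip recurs in Step~2:
\begin{itemize}
\item You announce that $Cb(p)$ is interdefinable with $D$, but your argument (``$p$ does not fork over $D$'') only yields $Cb(p)\subseteq acl^{eq}(D)$.
\item ``$D$ is fixed setwise'' yields nothing when $D$ is infinite, which it typically is, contrary to your claim that it is finite. For example, in $\mathcal{T}_0$, if $\bar a\subseteq acl(C)$ then $D=pr(\bar a)$ is infinite.
\end{itemize}
Step~3 then asserts $e\in dcl^{eq}(D)$, which does not follow from Steps~1--2. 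Corollary~\ref{algclosurecap} cannot bridge this: the intersection property says nothing about whether $e$ is \emph{definable}, rather than merely algebraic, over real parameters. The missing statement is $Cb(p)\subseteq dcl^{eq}(D)$, i.e.\ stationarity of $tp(\bar a/D)$. This is exactly what separates weak from geometric elimination, and it is not automatic when $acl(\emptyset)\neq dcl(\emptyset)$.

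\textbf{How to close it.} The tool is the $\Sigma^*$-quantifier elimination, not Lemma~\ref{Inequalities} or normality.
\begin{itemize}
\item Over the monster model $\mathbb{M}$, a type in $\bar x$ is determined by two pieces of data: its restriction to $\emptyset$, and the set of equalities $\pi(x_i)=m$ with $m\in\mathbb{M}$ and $\pi$ a composition of projections.
\item Let $p=tp(\bar a/acl^{eq}(C))$ and let $\mathfrak p$ be its global non-forking extension. In $\mathfrak p$ the only such equalities are those with $m\in D$. For real $m\in acl^{eq}(C)\setminus D$ this is the definition of $D$. For real $m\notin acl^{eq}(C)$, the formula $\pi(x_i)=m$ divides over $acl^{eq}(C)$, since $m$ has infinitely many conjugates and the corresponding instances are pairwise inconsistent.
\item Hence every $\sigma\in Aut(\mathbb{M}/D)$ fixes $\mathfrak p$, so $Cb(p)\subseteq dcl^{eq}(D)$.
\item Conversely, the formulas $\pi(x_i)=d$ in $\mathfrak p$ show $D\subseteq dcl^{eq}(Cb(p))$ pointwise.
\item By compactness there is a finite $D_0\subseteq D$ with $Cb(p)\in dcl^{eq}(D_0)$.
\item Use Step~1 in its exact form, $e\in dcl^{eq}(Cb(p))$ for $p$ over $acl^{eq}(e)$, not merely ``up to algebraicity''. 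Then $e\in dcl^{eq}(D_0)$ and $D_0\subseteq acl^{eq}(e)$, which is weak elimination.
\end{itemize}
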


\begin{proof}
We show that every definable set $D$ admits a smallest algebraically closed set of parameters over which it is definable. It suffices to prove the following claim: if $A$ and $B$ are algebraically closed sets and $D$ is definable over both $A$ and $B$, then $D$ is definable over $A\cap B$. Indeed, by Corollary~\ref{algclosurecap} the intersection $A\cap B$ is algebraically closed, and hence taking intersections yields the desired smallest algebraically closed set. 

We may assume that the formula that defines $D$ over $A$ is equivalent to a disjunction of conjunction of atomic and negation of atomic formulas, $\bigvee\bigwedge \phi(\bar x, \bar a)$, where each disjunctive clause is not contained in the union of the rest. An element from $A\setminus B$ must appear in at least one conjunctive clause. In addition, we may assume that in all disjunctive clauses the formulas that contain elements from $A\setminus B$ are not implied by the rest. Indeed, we may remove the disjunctive clauses that are definable over $A\cap B$, the remaining set is still definable over both $A$ and $B$.
We now take cases: 
\begin{itemize}

\item suppose in one of the disjunctive clauses we have an atomic formula $\pi(x_i)=a$ for some $a\in A\setminus B$. Since $B$ is algebraically closed and $a\notin B$. We have that $a$ has infinitely many images under automorphisms that fix $B$. In particular $D$ is not definable over $B$, since the automorphisms fixing $B$ move $\pi(x_i)=a$ to infinitely many pairwise disjoint sets.  

\item suppose elements $a\in A\setminus B$ only appear in negations of atomic formulas $\pi(x_i)\neq a$. We consider the complement of $D$, which is still definable over both $A$ and $B$. It is a conjunction of disjunctions $\bigwedge\bigvee \lnot\phi(\bar x, \bar a)$. In each conjunctive clause there is an atomic formula $\pi(x_i)=a$, where $a\in A\setminus B$, which is not contained in the disjunction of formulas over $A\cap B$ that belong to the same conjunctive clause. If any tuple that belongs to $D^{c}$ belongs to the intersection of disjunctions definable over $A\cap B$, then $D^{c}$ is definable over $A\cap B$. If not, then there is a formula $\pi(x_i)=a$, where $a\in A\setminus B$ for which at least one solution belongs to a tuple of the solution set $D^{c}$. This formula has infinitely many pairwise disjoint images under automorphisms that fix $B$ pointwise. As before, $D^{c}$ is not definable over $B$.     

\end{itemize}

\end{proof}

%

\begin{theorem}
Any complete theory of locally free algebras has trivial forking
\end{theorem}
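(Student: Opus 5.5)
We will show that forking independence is trivial: whenever $a,b,c$ are tuples and $C$ a set with $a\mathop{\smile\!\!\!\!\!\!\;\;\mid}_C b$, $a\mathop{\smile\!\!\!\!\!\!\;\;\mid}_C c$ and $b\mathop{\smile\!\!\!\!\!\!\;\;\mid}_C c$, then $a\mathop{\smile\!\!\!\!\!\!\;\;\mid}_C bc$. We work in a monster model of the relevant completion $T^*$ of $T_{\Sigma^*}$. We first give a concrete description of forking. By the quantifier elimination for completions of $T_{\Sigma^*}$, $tp(a/B)$ is determined by equalities and inequalities of the form $\pi_1(a_i)=\pi_2(e)$ with $e\in B\cup\{a\}$, together with (in)decomposability conditions. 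The key claim is
\[
a\mathop{\smile\!\!\!\!\!\!\;\;\mid}_C B \iff acl(aC)\cap acl(BC)=acl(C),
\]
where $acl$ is computed as in Lemmas~\ref{algclosureind}, \ref{algclosurefin} and \ref{algclosureinf}, i.e. as the $\Sigma$-closure of projections, together with $\operatorname{Ind}(\mathcal{M})$ when there are finitely many indecomposables.

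To prove the forward direction, suppose $e\in acl(aC)\cap acl(BC)\setminus acl(C)$. Then some projection/term equation $\pi(a)=t(\bar e')$ holds with $\bar e'\subseteq acl(BC)$ not all in $acl(C)$. Since $tp(e/acl(C))$ is non-algebraic, it has infinitely many conjugates over $acl(C)$, so the formula has infinitely many pairwise disjoint conjugates; this is the same disjointness argument used in the proof of Theorem~\ref{WEI}. Hence the formula divides, and therefore forks, over $C$.

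For the converse, assume the intersection condition. Every formula of $tp(a/BC)$ is, by quantifier elimination and normality (Theorem~\ref{NormalProof}), a Boolean combination of normal formulas $\pi_1(x)=\pi_2(e)$. The positive ones are all over $acl(C)$. The negative ones, and the equalities with parameters outside $acl(aC)$, are realized by any realization of $tp(a/acl(C))$ chosen "generically" with respect to $BC$. We would make this precise by building, via Lemma~\ref{Inequalities}, a realization $a'$ of $tp(a/acl(C))$ whose new projections are fresh, and checking $a'\equiv_{BC} a$. This shows $tp(a/BC)$ is the generic (non-forking) extension of $tp(a/acl(C))$.

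With the characterization in hand, triviality is a lattice computation with the closure operator. Set $X=acl(aC)$, $Y=acl(bC)$, $Z=acl(cC)$, $K=acl(C)$. We need $X\cap acl(YZ)=K$ given the pairwise conditions $X\cap Y=K$, $X\cap Z=K$, $Y\cap Z=K$. By the lemmas, $acl(YZ)=\Sigma\text{-}\mathrm{cl}(pr(Y\cup Z))$. Since $Y$ and $Z$ are projection-closed, $pr(Y\cup Z)=Y\cup Z$. So take $e\in X\cap\Sigma\text{-}\mathrm{cl}(Y\cup Z)$ and write $e=t(\bar y,\bar z)$ with $t$ a $\Sigma$-term of minimal height. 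Because $X$ is projection-closed, every subterm value of this decomposition, in particular each $y_i,z_j$, lies in $X$. By unique decomposition (Axioms $A_i$, $B_{ij}$) these subterm values are exactly iterated projections of $e$. Hence each $y_i\in X\cap Y=K$ and each $z_j\in X\cap Z=K$, so $e\in\Sigma\text{-}\mathrm{cl}(K)=K$. One subtlety must be handled: leaves of $t$ could be decomposable elements of $Y$ whose projections stay in $Y$. This is harmless, since we stop at whichever leaves lie in $Y\cup Z$. Notably, this argument even shows that $X\cap Y=X\cap Z=K$ alone suffices.

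The main obstacle is the backward direction of the characterization, namely proving that the intersection condition really implies non-forking. We would attempt it by constructing the generic realization directly as a fresh $\Sigma$-term over new elements (available by compactness, and by the infinitely many indecomposables or their absence) and verifying via quantifier elimination that it has the right type over $BC$. An alternative is to use stationarity over $acl^{eq}$ together with weak elimination of imaginaries (Theorem~\ref{WEI}), which lets us replace $acl^{eq}$ by real $acl$. The final clause of the theorem, that no infinite group is interpretable, would then follow from the standard fact that a stable theory with trivial forking interprets no infinite group (a group generic would witness nontrivial forking among $g,h,gh$).
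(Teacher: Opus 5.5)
\textbf{The gap: the backward implication is never proved.} Your lattice computation is sound. It is in fact more careful than the paper's corresponding step, which jumps from a common algebraic element straight to an equation $\pi_1(a_i)=\pi_2(b_j)$ outside $acl(A)$. Your leaf argument (acl-closed sets are closed under projections, so every leaf of a minimal decomposition of $e$ lies in $X$) is what justifies that step, and it even shows that independence of $a$ from $b$ and from $c$ suffices.

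What is missing is the implication $acl(aC)\cap acl(BC)=acl(C)\Rightarrow$ $a$ is independent from $B$ over $C$. This is exactly what turns $X\cap acl(YZ)=K$ into the conclusion, and you leave it open. Your sketch does not close it. Producing a realization $a'$ of $tp(a/acl(C))$ with ``fresh'' projections and $a'\equiv_{BC}a$ only shows that $a$ is generic in your sense; identifying that with non-forking is the very claim to be proved. Likewise, appealing to stationarity and weak elimination of imaginaries does not by itself identify $tp(a/BC)$ with the non-forking extension.

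\textbf{How the paper gets it.} The paper supplies this step through $1$-basedness, which it has from normality via Hrushovski--Pillay. In a $1$-based theory, $a$ is independent from $BC$ over $acl^{eq}(aC)\cap acl^{eq}(BC)$. Weak elimination of imaginaries then converts any imaginary in this intersection outside $acl^{eq}(C)$ into a real element of $acl(aC)\cap acl(BC)\setminus acl(C)$.

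\textbf{A fix that avoids $1$-basedness.} Reverse the direction of your argument. Let $a'$ realize a non-forking extension of $tp(a/acl(C))$ to $acl(C)\cup B$. By your forward direction, $acl(a'C)\cap acl(BC)=acl(C)$. By the quantifier elimination in $\Sigma^*$ together with unique decomposition, every atomic formula over $BC$ is equivalent to a Boolean combination of parameter-free formulas in $x$ and equations $\pi(x)=d$, where $\pi$ is a composition of projections and $d\in dcl(BC)$.
\begin{itemize}
\item If $d\in acl(C)$, the truth value of $\pi(x)=d$ is fixed by $tp(\cdot/acl(C))$, which $a$ and $a'$ share.
\item If $d\notin acl(C)$, the equation fails for both $a$ and $a'$: since $\pi(a)\in dcl(a)$, it would place $d$ in $acl(aC)\cap acl(BC)\setminus acl(C)$, and likewise for $a'$.
\end{itemize}
Hence $a\equiv_{BC}a'$, so $a$ is independent from $B$ over $C$. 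With this in place your proof goes through, and it needs neither Lemma~\ref{Inequalities} nor $1$-basedness.
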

\begin{proof}
Let $\bar a, \bar b, \bar c$ be finite tuples from a locally free algebra $\mathcal{M}$ and $A\subseteq \mathcal{M}$. Assume $\bar a, \bar b , \bar c$ are pairwise independent over $A$, we will show that $\bar a$ is independent from $\bar b, \bar c$ over $A$. If not, by $1$-basedness, there exists an element in $acl^{eq}(A, \bar a)\cap acl^{eq}(A, \bar b, \bar c)\setminus acl^{eq}(A)$ and by the weak elimination of imaginaries we have that there exists an element in the real sort $acl(A, \bar a)\cap acl(A, \bar b, \bar c)\setminus acl(A)$. By the characterization of the algebraic closure operator we must have that there exists an element in  $pr(A,\bar a)\cap pr(A,\bar b, \bar c)\setminus pr(A)$. We next observe that $pr(D)=\bigcup\{pr(d) \ : \ d\in D\}$. Hence, there is come $a_i$ from $\bar a$ not in $A$ and $b_j$ from $\bar b$ not in $A$ (or $c_j$ from $\bar c$ not in $A$), such that $\pi_1(a_i)=\pi_2(b_j)$ (or $\pi_1(a_i)=\pi_2(c_j)$). In particular, $\bar a$ either forks with $\bar b$ or with $\bar c$ over $A$, a contradiction.   
\end{proof}

\begin{corollary}
No infinite group is interpretable in any model of any comlete theory of a locally free algebra. 
\end{corollary}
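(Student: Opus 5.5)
The plan is to derive the corollary from the trivial forking theorem just proved, combined with the stability and $1$-basedness of every complete theory of locally free algebras. The main tool is the standard fact that a stable theory with trivial forking interprets no infinite group. We argue by contradiction: suppose $G$ is an infinite group interpretable in a model $\mathcal{M}$ of a complete theory $T$ of locally free algebras. We may pass to a sufficiently saturated elementary extension and view $G$ as a definable group in $\mathcal{M}^{eq}$.

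The first step is to transfer stability and triviality of forking to $T^{eq}$. Stability passes to $T^{eq}$ automatically. For triviality, forking in $\mathcal{M}^{eq}$ is governed by algebraic closure in the eq-sense. By weak elimination of imaginaries (Theorem~\ref{WEI}), every imaginary tuple is interalgebraic with a real tuple. Hence pairwise independence and full independence of imaginary tuples over a set $A$ reduce to the corresponding statements for real tuples. Trivial forking therefore holds in $T^{eq}$ as well, and in particular for elements of $G$.

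The second step produces a failure of triviality inside $G$. Since $G$ is infinite and stable, it has a generic type over a small model $M_0$ (or over $acl^{eq}(\emptyset)$). Take $g,h$ independent generics over $M_0$. Then $g$, $h$ and $gh$ are pairwise independent over $M_0$: the translate $gh$ of a generic is a generic independent from $g$, and likewise from $h$. However, $gh\in dcl(g,h)$. If $gh$ were independent from $(g,h)$ over $M_0$, then $gh$ would lie in $acl^{eq}(M_0)$, so its type would be algebraic. This is impossible, since a generic type of an infinite group is non-algebraic. So the triple $(g,h,gh)$ is pairwise independent but not independent, contradicting trivial forking.

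The main obstacle is the transfer in the first step, namely checking that triviality of forking, which was proved for real tuples, really holds for imaginary tuples. I would handle it carefully using weak elimination of imaginaries: replace each imaginary $e$ by a finite real tuple $\bar c$ with $e\in dcl^{eq}(\bar c)$ and $\bar c\in acl(e)$. Independence is preserved under replacing tuples by interalgebraic ones, so the reduction goes through. Alternatively, one could simply cite the general fact that trivial forking in a stable theory passes to $T^{eq}$ and precludes infinite definable groups.
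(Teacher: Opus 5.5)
Your proof is correct and takes the same route as the paper, which derives the corollary directly from the trivial-forking theorem (stating only ``in particular'').

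You also spell out what the paper leaves implicit. First, you transfer triviality to $T^{eq}$ over the real base $M_0$, using the interalgebraicity given by weak elimination of imaginaries. Second, you use the triple of generics $g$, $h$, $gh$, which is pairwise independent over $M_0$ but not independent, to show no infinite group is interpretable.
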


\begin{theorem}\label{NFCP}
No complete theory of locally free algebras has the finite cover property.
\end{theorem}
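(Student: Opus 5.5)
We will argue via the quantifier elimination for completions of $T_{\Sigma^*}$ together with the normality established in Theorem~\ref{NormalProof}. By the Fact recalled before the definition of the finite cover property, it suffices to treat formulas $\phi(x,\bar y)$ with $x$ a singleton. By quantifier elimination in $\Sigma^*$, $\phi(x,\bar y)$ is equivalent to a finite disjunction of conjunctions of literals of the form $\pi_1(x)=\pi_2(y_j)$, $\pi_1(x)=\pi_2(x)$, $\pi_1(y_i)=\pi_2(y_j)$ and their negations, together with (in)decomposability conditions on projections of $x$. The literals mentioning only $\bar y$ merely select which instances are nonempty. So we reduce to controlling families of instances built from the basic normal formulas $\pi_1(x)=\pi_2(b)$ and their negations.

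The key structural observation we will prove first is this. For a fixed composition $\pi_1$, the sets $\{x:\pi_1(x)=e\}$ for varying $e$ are pairwise disjoint or equal. Moreover, each nonempty set $\{x:\pi_1(x)=e\}$ is either a single element or an infinite set. The reason is that the preimage is described by a term $\exists\bar z\,(x=t(\bar z,e))$ whose solutions are infinite as soon as $\bar z$ is nonempty (Axioms $A_i$, $C_t$). Intersections of finitely many positive basic instances are again of this ``term-with-parameters'' form, or empty, by Lemma~\ref{Equalities}. Lemma~\ref{Inequalities} shows that such a set minus finitely many sets of the same kind is nonempty as soon as it is not covered. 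We will sharpen this into a uniform bound: a term set $\exists\bar z(x=r(\bar z,\bar d))$ with at least one free variable cannot be covered by finitely many term sets that are each a proper subset of it, nor by finitely many singletons.

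With this in hand, take a set $\{\phi(x,\bar b_\alpha)\}_\alpha$ that is $n$-consistent, where $n$ will be a bound depending only on $\phi$. Write $\phi$ in disjunctive normal form, and let $N_\phi$ bound the number of literals and the projection depth. Choose $n$ large relative to $N_\phi$, for instance exceeding the number of distinct positive patterns of depth $\le N_\phi$ times $N_\phi$. By normality, positive equational constraints that are pairwise compatible are globally compatible: equal-or-disjoint sets that pairwise meet are all equal. Hence the positive parts of all instances collapse, uniformly, to finitely many constraints. Their intersection is a single term set by Lemma~\ref{Equalities}. The negative parts impose only ``avoid'' conditions, and by the sharpened form of Lemma~\ref{Inequalities} these cannot exhaust an infinite term set. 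They can exhaust a singleton, but that is already witnessed by a $2$-element subfamily. Compactness then yields consistency of the whole family.

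The main obstacle is the disjunctions. An instance is a union of several conjunctive clauses, and $n$-consistency does not immediately give a uniform choice of clause. We will handle this with a Ramsey/pigeonhole argument in the style of the standard proof that normal formulas lack the fcp (Shelah, Pillay). First we show each clause, as a formula in $x$ over $\bar y$, is a Boolean combination of equivalence-type relations whose classes are either singletons or infinite with no finite bound issue. Then we use the known criterion that a stable theory has the nfcp iff every formula ``$\exists^\infty$'' is eliminated, i.e.\ there is a uniform bound $m_\phi$ such that $|\phi(M,\bar b)|>m_\phi$ implies infinite. This last property follows from the dichotomy above, since an instance is a Boolean combination of singletons and infinite term sets, with the finite part bounded by $N_\phi$. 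We expect that proving elimination of $\exists^\infty$ and invoking the Shelah criterion (stable $+$ elimination of $\exists^\infty$ in $T^{eq}$, which here reduces to the real sort via weak elimination of imaginaries, Theorem~\ref{WEI}) is the cleanest route. If the $T^{eq}$ reduction proves delicate, we will fall back on the direct combinatorial argument above.
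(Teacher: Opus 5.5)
Your preferred route differs from the paper's. The paper works directly with covers. It reduces to a single conjunction $\phi^+\wedge\phi^-$ (asserting that nfcp is preserved under disjunctions) and notes that in an fcp witness the positive parameters are fixed. It then shows that the level sets $\pi_j(x)=b^j_i$ can cover the fixed positive set only through variables forced to be indecomposable. This gives the explicit bound $(n+1)M_{\phi^+}|\bar x|$, resp.\ $2M_{\phi^+}|\bar x|$ for $\mathcal{T}_0,\mathcal{T}_\omega$.

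You instead invoke Shelah's theorem that a stable theory has nfcp iff $T^{eq}$ eliminates $\exists^\infty$. This removes the disjunction obstacle you flagged. Home-sort elimination of $\exists^\infty$ alone is not that criterion, so the passage through Theorem~\ref{WEI} is essential, but it is routine rather than delicate. By compactness, weak elimination is uniform on each imaginary sort: there are finitely many $\emptyset$-definable maps $g_i$ and formulas $\theta_i(\bar w,e)$ with at most $N$ solutions, at least one containing some $\bar w$ with $g_i(\bar w)=e$. Hence a definable set of imaginaries is finite iff certain definable sets of real tuples are, with comparable sizes, and those are controlled by their coordinate projections. The price of your route is that you lose an explicit bound.

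The genuine gap is in the one step with real content: uniform finiteness in the home sort. The ``dichotomy'' you offer for it is false as stated.
\begin{itemize}
\item Level sets need not be singletons or infinite. For unary $g$ and $e$ not $g$-decomposable, $\{x:\pi^g(x)=e\}=\{e,g(e)\}$.
\item Your sharpened Lemma~\ref{Inequalities} fails. If $\Sigma=\{f,g\}$, then in $\mathcal{T}_0$ the universe (the term set $x=z$) is the union of the two proper term sets $\exists\bar z\,(x=f(\bar z))$ and $\exists z\,(x=g(z))$.
\item More fundamentally, a Boolean combination of singletons and infinite sets can be finite of any size, so ``the finite part is bounded by $N_\phi$'' does not follow. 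In $\mathcal{T}_n$ with $n\geq 2$ non-singleton finite pieces really occur. For binary $f$, the set of $f$-decomposable $x$ with $\pi^f_1(x)=b$ and $\pi^f_2(x)$ indecomposable is $\{f(b,c):c\in\operatorname{Ind}(\mathcal{M})\}$, of size $n$.
\item Adding $\pi^f_2(x)\neq y_2$ and letting $y_2$ run through the $n$ indecomposables gives $n$ instances, any $n-1$ of which are consistent but which are jointly inconsistent. So no bound below $n$ works. This also refutes the claim in your fallback argument that exhaustion of a finite piece is already witnessed by two instances.
\end{itemize}

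What you need instead is a profile argument.
\begin{enumerate}
\item Take $h$ large relative to the nesting depth of $\phi$. Classify $x$ by three pieces of data: its labelled skeleton to depth $h$ (labels include ``indecomposable''); the equality pattern among its nodes; and which nodes equal which of the boundedly many parameter projections occurring in $\phi$.
\item Then $\phi(\mathcal{M},\bar b)$ is a union of boundedly many such classes. After unification, substituting away the nodes forced equal to parameters, each class is a set $\{t(\bar w)\}$ cut out by finitely many disequations.
\item Each free $w$ either ranges over an infinite set (a decomposable leaf at depth $h$, or an indecomposable in $\mathcal{T}_\omega$) or is forced to be indecomposable in $\mathcal{T}_n$.
\item In the first case, fix the other coordinates of a solution and vary $w$. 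A one-variable equation in a locally free algebra has no solution, exactly one, or all elements as solutions. So each disequation excludes at most one value of $w$, and the class is empty or infinite.
\item Otherwise the class has at most $n^{\ell}$ elements, where $\ell$ is the number of free leaves.
\end{enumerate}
With this lemma in place, your route does prove the theorem.
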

\begin{proof}
An easy Ramsey type argument shows that nfcp is closed under disjunctions. Hence, by quantifier elimination,  we only need to prove that finite conjunctions of formulas of the form $\pi_1(x)=\pi_2(x), q_1(x)=q_2(y_j)$ and negations have nfcp. We may assume that we don't have conjunctive clauses that only involve parameters $\bar y$, moreover in any conjunctive clause that some $y$ appears we may replace the composition of projections $\pi(y)$ by a fresh variable. Hence we may assume that the conjunctions have the following form: $$\bigwedge \pi_i(x)=\pi_j(x)\bigwedge \pi_q(x)\neq \pi_p(x) \land \pi_1(x)=y_1\land \pi_2(x)=y_2\land\ldots\land\pi_m(x)=y_m\land $$ $$ \land \pi_{m+1}(x)\neq y_{m+1}\land\ldots\land\pi_k(x)\neq y_k$$ 

We observe that if a sequence of $n+1$ tuples witnesses fcp, i.e. the intersection of the $n+1$ instances is inconsistent, but the intersection of any $n$ is consistent, then the first $m$ values of the tuples must be fixed, as they correspond to the variables $y_1, \dots, y_m$, that render the formula inconsistent if they are different. Hence, we may decompose the above conjunctions into two formulas $\phi^-(x,y_{m+1}, \ldots, y_k):=\pi_{m+1}(x)\neq y_{m+1}\land\ldots\land\pi_k(x)\neq y_k$ and $\phi^+(x,y_1, \ldots, y_m)$. By the previous observation, for any tuples $\bar b_0, \ldots, \bar b_n$ that witness fcp, the solution set of $\phi^{+}(x,\bar b_i)$ is fixed.  Since $\phi(x,\bar b_0)\land\ldots\land\phi(x, \bar b_n)$ is inconsistent, we have that for each $a$ in $\phi^+(x)$, there is a $m< j \leq k$ and $0\leq i\leq n$, such that $a$ is in $\pi_j(x)=b_i^j$. The fixed set $\phi^{+}(x)$ is the union of formulas of the form $\psi_t(x):=\exists \bar x (x=t(\bar x, b_0^1, \ldots, b_0^m)\bigwedge \pi_i(x)=\pi_j(x)\bigwedge \pi_q(x)\neq \pi_p(x)$. There is a bound on the number of disjunctive clauses in the union that only depends on $\pi_i$, $i\leq m$, and not on the parameter $\bar b_0$. We call this bound $M_{\phi^+}$. The atomic formulas $\pi_i(x)=\pi_j(x)$ and their negations, impose some equality/inequality relations and (in)decomposability conditions between elements among $x, \bar x$. 

We observe that the solution set of $\pi_j(x)=b_i^j$ cannot contain the solution set $\pi_q(x)=b_0^q$ for any $q\leq m$, otherwise a single instance of the formula would be inconsistent. In particular, a disjunct corresponding to $\pi_j(x)=b_i^j$ has the form $\exists \bar w \big(x=t(\ldots, t_\mu(\ldots,b_i^j,\ldots), \ldots)\big)$, where $t_\mu(\ldots,b_i^j,\ldots)$ is at the place of some $x_\mu$ from $\bar x$. Hence, the only possibility for all elements $a$, which are solutions of $\psi_t(x)$, to be covered is that the following conditions hold: $x_\mu$ is indecomposable, the theory has exactly $n+1$ indecomposable elements and $\pi_j(x)=b_0^j\land\ldots\land \pi_j(x)=b_n^j$ covers them. 

Hence, for any theory with finitely many indecomposable elements, say $n$, if for a formula as above and any set of instances if every $(n+1)\times M_{\phi^+}\times |\bar x|$ subset is consistent, then the whole set is consistent. In the first-order theories, $\mathcal{T}_\omega$ and $\mathcal{T}_0$ we may take the bound to be  $2\times M_{\phi^+}\times |\bar x|$.  
\end{proof}

\bibliography{biblio}
\newpage 
\ \\
{\bf Davide Carolillo}\\ \\
Dipartimento di Matematica ``Giuseppe Peano''\\
Universit\`a degli Studi di Torino\\
Palazzo Campana\\
Via Carlo Alberto 10\\
10123 Torino\\
Italy
\ \\ \\
{\bf Yifan Jia}\\ \\
University of Electronic Science and Technology of China (UESTC) \\
Qingshuihe Campus of UESTC, No.2006, Xiyuan Avenue, West Hi-tech Zone,\\
Chengdu, Sichuan, Zip/Postal Code: 611731 \\ 
P.R.China
\ \\ \\
{\bf Bakh Khoussainov}\\ \\
University of Electronic Science and Technology of China (UESTC) \\
Qingshuihe Campus of UESTC, No.2006, Xiyuan Avenue, West Hi-tech Zone,\\
Chengdu, Sichuan, Zip/Postal Code: 611731 \\ 
P.R.China
\ \\ \\
{\bf Rizos Sklinos}\\ \\
Academy of Mathematics \& Systems Science\\
Chinese Academy of Science\\
Room 1016 SiYuan Building\\
No.55 of Zhongguancun East Road\\
Haidian District, Beijing Zip/Postal Code: 100190\\
P. R. China

\end{document}